\documentclass[reqno, 12pt]{article}

\usepackage{enumerate}
\usepackage{enumitem,needspace}
\usepackage{latexsym}
\usepackage[centertags]{amsmath}
\usepackage{amsfonts}
\usepackage{amsthm}
\usepackage{amssymb,mathtools}
\usepackage{newlfont}
\usepackage{graphics}
\usepackage{color}
\usepackage{float}
\usepackage{diagbox}
\usepackage{tocloft}
\usepackage{titlesec}
\usepackage{booktabs,longtable,array}
\usepackage{extpfeil}
\usepackage{centernot}
\usepackage[pagebackref,colorlinks=true,linkcolor=blue,citecolor=red,urlcolor=blue]{hyperref}
\usepackage[linesnumbered,ruled,vlined]{algorithm2e}
\usepackage{url}
\usepackage[T1]{fontenc}
\usepackage{lmodern}
\usepackage{microtype}
\usepackage[nameinlink,noabbrev,capitalize]{cleveref}
\usepackage{rotating}
\usepackage{multirow}
\usepackage{extarrows}
\usepackage[sort,compress,numbers]{natbib}
\usepackage[utf8]{inputenc}
\usepackage{xcolor}
\usepackage{listings}
\usepackage{aliascnt}
\numberwithin{equation}{section}

\newtheorem{theorem}{Theorem}[section]
\newtheorem{proposition}[theorem]{Proposition}
\newtheorem{lemma}[theorem]{Lemma}
\newtheorem{corollary}[theorem]{Corollary}

\theoremstyle{definition}

\newtheorem{remark}[theorem]{Remark}
\newtheorem{example}[theorem]{Example}

\newtheorem{Question}[theorem]{Question}

\allowdisplaybreaks[4]

\SetKwInput{KwInput}{Input}                
\SetKwInput{KwOutput}{Output}              

\DeclareMathOperator{\conv}{conv}
\DeclareMathOperator{\cone}{cone}
\DeclareMathOperator{\aff}{aff}
\DeclareMathOperator{\htop}{ht}
\DeclareMathOperator{\nvol}{nVol}
\DeclareMathOperator{\vertices}{Vert}
\DeclareMathOperator{\Hilb}{Hilb}
\newcommand{\Z}{\mathbb Z}
\newcommand{\R}{\mathbb R}

\newcommand{\avec}{\boldsymbol a}
\newcommand{\bvec}{\boldsymbol b}
\newcommand{\eps}{\varepsilon}

\newcommand{\ceil}[1]{\left\lceil #1\right\rceil}
\newcommand{\floor}[1]{\left\lfloor #1\right\rfloor}

\title{Two families of Hermite normal form simplices}

\author{Feihu Liu$^{\color{blue} \dag}$, Jinlong Tang$^{\color{blue} \ddag}$, Sihao Tao$^{\color{blue} \P}$, and Zihao Zhang$^{\color{blue} \S}$
\\[2mm]
{\small $^{\color{blue} \dag}$ Center for Combinatorics, LPMC,}\\[-0.8ex]
{\small Nankai University, Tianjin 300071, P.R.~China}\\
{\small $^{\color{blue} \ddag, \P}$ School of Mathematical Sciences,}\\[-0.8ex]
{\small Capital Normal University, Beijing, 100048, P.R.~China}\\
{\small $^{\color{blue} \S}$ School of Mathematics and Statistics,}\\[-0.8ex]
{\small Beijing Institute of Technology, Beijing 102400, P.R.~China}\\
{\small {\color{blue} $^\dag$} Email address: \url{liufeihu7476@163.com}}\\
{\small {\color{blue} $^\ddag$} Email address: \url{jinlong\_tang@cnu.edu.cn}}\\
{\small {\color{blue} $^\P$} Email address: \url{sihao\_tao@cnu.edu.cn}}\\
{\small {\color{blue} $^\S$} Email address: \url{zihao-zhang@foxmail.com}}\\
}

\date{\today}

\begin{document}

\maketitle

\begin{abstract}
We investigate the Ehrhart coefficients and the integer decomposition property for two families of Hermite normal form simplices studied by Bruckamp, Caicedo, and Juhnke. The first family consists of simplices of the form $S_{\boldsymbol{a}}=\mathrm{conv}(0,e_1,\ldots,e_{d-1},\boldsymbol{a})$, where $\boldsymbol{a}=(a_1,\ldots,a_{d-1},N)$, while the second comprises the simplices $T_{d,N}=\mathrm{conv}\bigl(0,e_1,\ldots,e_{d-2},(d-2,\ldots,d-2,d-1,0),(1,\ldots,1,N)\bigr)$. For the family $T_{d,N}$, we establish the unimodality of the Ehrhart coefficients in arbitrary dimensions and completely classify the log-concave and real-rooted cases. For the sub-family $S_{\boldsymbol{a}}$ with $\boldsymbol{a}=(N-q,\ldots,N-q,N)$, we characterize both the integer decomposition property and the existence of a regular unimodular triangulation via a congruence condition on a negative continued fraction. More generally, we extend our analysis of the integer decomposition property and unimodular triangulations to $S_{\boldsymbol{a}}$ for arbitrary vectors $\boldsymbol{a}$. As a consequence, our results resolve three open problems posed by Bruckamp, Caicedo, and Juhnke.
\end{abstract}

\noindent
\begin{small}
\emph{2020 Mathematics subject classification}: Primary 52B20; Secondary 05A20, 11A55, 52B05.
\end{small}

\noindent
\begin{small}
\emph{Keywords}: Ehrhart polynomial; Hermite normal form; Integer decomposition property; Unimodular triangulation; Negative continued fraction; Unimodality.
\end{small}

\tableofcontents

\section{Introduction}\label{sec:introduction}

Let $P\subset\R^d$ be a $d$-dimensional \emph{lattice polytope}, that is, the convex hull of finitely many points of $\Z^d$. For a nonnegative integer $t$, write $tP=\{tx:x\in P\}$. Ehrhart's theorem states that
\[L_P(t)=|tP\cap\Z^d|
\]
is a polynomial $L_P(t)$ of degree $d$; see \cite{Ehrhart} and \cite[Chapter~3]{BR}. 
This polynomial is called the \emph{Ehrhart polynomial} of $P$.
The coefficients of this polynomial reflect the geometry of $P$, but their signs and relative sizes are generally difficult to control. 

We use $\conv$ for convex hull and $e_1,\ldots,e_d$ for the standard
basis of $\R^d$. For integers $d,N\ge2$ and $0\le a_i<N$, set
\begin{equation}\label{eq:one-row-family}
 S_{\avec}=\conv(0,e_1,\ldots,e_{d-1},\avec),\qquad \avec=(a_1,\ldots,a_{d-1},N).
\end{equation}
These are the \emph{one-row Hermite normal form simplices}: in the matrix whose rows are their nonzero vertices, only the last row can differ from a row of the identity matrix. Our row convention for Hermite normal form is specified in \cref{sec:lattice-conventions}; it agrees with \cite[Section~2.1]{BBCHV}.
Note that every lattice simplex is unimodularly equivalent to a simplex in Hermite normal form \cite{Schrijver}.
For previous studies on one-row Hermite normal form simplices $S_{\avec}$, we recommend references \cite{BBCHV,BDHLS,HHLi}.

We shall also consider
\begin{equation}\label{eq:two-row-family}
 T_{d,N}=\conv\bigl(0,e_1,\ldots,e_{d-2},(d-2,\ldots,d-2,d-1,0),(1,\ldots,1,N)\bigr).
\end{equation}
For $d=2$, this means $T_{2,N}=\conv(0,(1,0),(1,N))$.
Bruckamp, Caicedo, and Juhnke proved that $T_{d,N}$ is Ehrhart positive, meaning that every coefficient of $L_{T_{d,N}}(t)$ is positive \cite[Theorem~5.12]{BCJ}. 
Subsequently, they posed the following open problem.

\begin{Question}(\cite[Question~5.14]{BCJ})
Let $d\geq 2$. Is the Ehrhart polynomial of $T_{d,N}$ unimodal for all integers $N\ge 2$?
\end{Question}

Recall that a real sequence $(c_0,\ldots,c_d)$ is \emph{unimodal} if $c_0\le\cdots\le c_r\ge\cdots\ge c_d$ for some $r$.
A nonnegative sequence is \emph{log-concave} if $c_j^2\ge c_{j-1}c_{j+1}$ for $1\le j\le d-1$.
A real polynomial is \emph{real-rooted} if all its complex zeros are real. Our first result answers their question and determines exactly when the two stronger properties hold.

\begin{theorem}\label{thm:main-ehrhart}
Let $d,N\ge2$ be integers, and write $L_{d,N}(t)=L_{T_{d,N}}(t)=\sum_{j=0}^d\ell_jt^j$.
The sequence $(\ell_0,\ldots,\ell_d)$ is positive and unimodal. Moreover:
\begin{enumerate}
\item[(i)] It is log-concave if and only if $2\le d\le4$, or $d=5$ and $N\in\{2,3\}$. In each of these cases, $\ell_j^2>\ell_{j-1}\ell_{j+1}$ for $1\le j\le d-1$.
\item[(ii)] $L_{d,N}(t)$ is real-rooted if and only if $d=2$, or $d=3$ and $N\ge35$.
\end{enumerate}
\end{theorem}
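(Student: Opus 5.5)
The plan is to first obtain an explicit closed form for $L_{d,N}(t)$, and then reduce all three properties to inequalities between explicit coefficient sequences. I expect, as in the computation behind \cite[Theorem~5.12]{BCJ}, that the $h^*$-polynomial of $T_{d,N}$ depends affinely on $N$; its normalized volume is $\det=(d-1)N$. I will assume this and verify it by enumerating the lattice points of the fundamental parallelepiped of the cone over $T_{d,N}$, graded by height. The group $\Z^d/\Lambda$ has order $(d-1)N$ and should split into an $N$-part coming from the last row and a $(d-1)$-part coming from the row $(d-2,\ldots,d-2,d-1,0)$. The heights should then be given by fractional-part sums that can be summed explicitly. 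Writing $L_{d,N}(t)=\sum_i h^*_i\binom{t+d-i}{d}$ and expanding in the monomial basis gives
\[
\ell_j=\alpha_j(d)+N\beta_j(d).
\]
Here $\alpha_j(d)$ and $\beta_j(d)$ are explicit combinations of unsigned Stirling numbers of the first kind, coming from the coefficients of $\binom{t+d-i}{d}$. Positivity of the $\ell_j$ is already known from \cite[Theorem~5.12]{BCJ}.

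For unimodality I will study the differences $\ell_j-\ell_{j-1}=(\alpha_j-\alpha_{j-1})+N(\beta_j-\beta_{j-1})$. These are affine in $N$, so the sign pattern in $j$ can change only at finitely many thresholds $N$. I will show that both $(\alpha_j)$ and $(\beta_j)$ are unimodal, that their modes lie at the same index or at adjacent indices, and that the differences change sign at most once on the overlap. The tools are standard estimates for Stirling coefficients: the coefficients of $\binom{t+d}{d}$ form a log-concave sequence, so shifted combinations of them stay unimodal once the shift is controlled. Small values of $d$, roughly $d\le 8$, will be checked directly.

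For part (i), each quantity $\ell_j^2-\ell_{j-1}\ell_{j+1}$ is a quadratic polynomial in $N$ with coefficients depending on $d$.
\begin{itemize}
\item For $2\le d\le4$, I will check that every such quadratic is positive for all $N\ge2$, which gives the strict inequalities.
\item For $d=5$, I will locate the index $j$ at which the quadratic becomes negative exactly when $N\ge4$.
\item For $d\ge6$, I will exhibit one index, most likely $j=d-1$ or $j=1$ where the Stirling asymptotics are most explicit, at which the quadratic is negative for all $N\ge2$. This requires a uniform-in-$d$ sign argument on its three coefficients.
\end{itemize}

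For part (ii), a real-rooted polynomial with positive coefficients has log-concave coefficients by Newton's inequalities. Part (i) therefore rules out real-rootedness in every case except $d=4$, and $d=5$ with $N\in\{2,3\}$.
\begin{itemize}
\item For $d=2$, every quadratic Ehrhart polynomial with positive coefficients here has nonnegative discriminant, which can be checked directly.
\item For $d=3$, I will compute the cubic discriminant of $L_{3,N}$ as a polynomial in $N$ and show it is negative for $N\le34$ and nonnegative for $N\ge35$.
\item For $d=4$, and for $d=5$ with $N\in\{2,3\}$, I will exhibit nonreal roots. For $d=4$ this could be done through a quartic discriminant in $N$, or by applying Newton's inequalities in the stronger form $\ell_j^2\ge\ell_{j-1}\ell_{j+1}\frac{(j+1)(d-j+1)}{j(d-j)}$ and showing this fails for some $j$ and every $N$. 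The two cases with $d=5$ are finitely many explicit polynomials.
\end{itemize}

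I expect the main obstacle to be the uniform-in-$d$ analysis: proving unimodality for every $d$, and proving failure of log-concavity for all $d\ge6$. Both need sharp enough bounds on ratios of Stirling-type coefficients. My first attempt there will be to compare $\ell_j$ with the coefficients of $((d-1)N)\cdot t^{d}/d!$ plus lower-order terms of $\binom{t+d}{d}$-type polynomials.
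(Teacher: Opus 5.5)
\paragraph{The gap: unimodality for arbitrary $d$.} This is the main content of the theorem, and your plan does not establish it. It rests on two unproved claims: that $(\alpha_j)$ and $(\beta_j)$ are each unimodal, and that their modes are adjacent. Adjacency would suffice if you had it, but the tool you cite cannot supply it.

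Here $\alpha(t)=\binom{t+d}{d}-\binom{t+1}{d}$ and $\beta(t)=\binom{t+d}{d+1}-\binom{t+1}{d+1}$. The subtracted polynomials have roots $-1,0,1,2,\dots$, so their coefficients are signed Stirling numbers of alternating sign. Log-concavity of the coefficients of $\binom{t+d}{d}$ therefore says nothing about where the modes of the differences lie. Already for $d=7$ this is visible:
\begin{itemize}
\item the coefficients of $7!\binom{t+7}{7}$ begin $5040,13068,13132$, peaking at $t^2$, whereas $7!\,\alpha$ begins $5040,13188,12978$, peaking at $t^1$;
\item $8!\,\beta=(0,4320,14112,13272,\dots)$ peaks at $t^2$, while its dominant part $t\prod_{s=1}^7(t+s)$ peaks at $t^3$.
\end{itemize}
The alternating corrections move both modes, and adjacency survives only because both happen to move together. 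Nothing in your proposal controls this uniformly in $d$. A direct check of $d\le 8$ does not address the uniform statement. Your fallback, that ``the differences change sign at most once on the overlap,'' is a restatement of what must be proved.

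\paragraph{The missing idea.} What is needed is an identity that exposes a common turning point for all $d$ at once. The paper sets $G=\prod_{s=1}^{d-2}(t+s)$, $\kappa=N(d-1)-(d+1)$ and $R=t(\kappa+Nt)G$. It then writes
\[
(d+1)!\,L_{d,N}(t)=P(t)+(1+t)\bigl(R(t)-(-1)^dR(-t)\bigr),\qquad P=GU,
\]
for an explicit quadratic $U$. Consequently the consecutive coefficient differences alternate between those of $P$ and those of $Q=P+2(1+t)R=GV$. The coefficient-ratio order $P\preceq Q\preceq tP$ pins each $q_j/q_{j-1}$ between $p_j/p_{j-1}$ and $p_{j-1}/p_{j-2}$, which forces a single sign change. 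That order follows from four explicit inequalities between $U$ and the cubic $V$, because multiplication by each factor $t+s$ preserves $\preceq$. Some such structural device is needed in place of Stirling-number estimates.

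\paragraph{Parts (i) and (ii).} Your outlines here are sound and essentially follow the paper.
\begin{itemize}
\item For $d\ge 6$ the right index is $j=d-1$. The top three coefficients of $d!\,L_{d,N}$ are exact, namely $N(d-1)$, $\frac{d(d-1)}{2}(N+2)$ and $\frac{d(d-1)^2}{24}\bigl(N(d-1)(d-2)+24\bigr)$, so no asymptotics are needed.
\item The index $j=1$ would not work: for $d=6$ one has $(6!)^2(\ell_1^2-\ell_0\ell_2)=14400N^2+252000N+1839600>0$.
\item Newton's inequality at $j=d-1$ disposes of $d=4$ and of $d=5$ with $N\in\{2,3\}$. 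The paper applies it to all $d\ge4$ at once.
\end{itemize}
Two small slips. First, the cases left open by (i) are $d\le4$ together with $d=5$, $N\in\{2,3\}$; you do treat $d=2,3$ separately, so only the sentence is off. Second, the relevant group is cyclic of order $(d-1)N$, so it splits as you describe only when $\gcd(N,d-1)=1$. In any case the $h^*$-polynomial can simply be quoted from \cite[Theorem~5.10]{BCJ}.
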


We next turn to integer decomposition property. A lattice polytope $P$ has the \emph{integer decomposition property}, abbreviated \emph{IDP}, if for every integer $h\ge1$ and every $x\in hP\cap\Z^d$ there are $x_1,\ldots,x_h\in P\cap\Z^d$ with $x=x_1+\cdots+x_h$.
A \emph{lattice triangulation} of $P$ is a covering by lattice simplices that meet in common faces. 
It is \emph{unimodular} if every maximal simplex has normalized volume one, and \emph{regular} if it
is induced by the lower faces of a lifting of its vertices. 
The precise lattice and lifting conventions are given in \cref{sec:preliminaries}.
A unimodular triangulation implies IDP, but the converse fails even for one-row simplices.

For $1\le q<N$, abbreviate $S_{d;q,N}=S_{(N-q,\ldots,N-q,N)}$.  The case $q=1$ was classified in \cite[Theorem~A]{BCJ}:
$S_{d;1,N}$ is IDP, equivalently admits a unimodular triangulation, precisely when $N\equiv0$ or $1\pmod d$. 
After that, Bruckamp, Caicedo, and Juhnke posed the following open problem:

\begin{Question}(\cite[Question~3.14]{BCJ})\label{Question3.14}
For which values of $q$ and $N$, with $1\leq q<N$, does the one-row Hermite normal form simplex $S_{d;q,N}$ admit a unimodular triangulation or satisfy the integer decomposition property?
\end{Question}

To state it, recall that the \emph{negative continued fraction} of a rational number $x>1$ is the unique expression
\[x=[a_1,\ldots,a_s]^-
=a_1-\cfrac1{a_2-\cfrac1{\ddots-\cfrac1{a_s}}},\qquad a_i\in\Z,\quad a_i\ge2.
\]
It is obtained by taking ceilings in the Euclidean algorithm; \cref{sec:negative-euclid} records the recurrence used here.

\begin{theorem}\label{thm:main-constant}
Let $d,N\ge2$ and $1\le q<N$ be integers, and set $m=d-1$ and $B=N-mq-1$. The following conditions are equivalent:
\begin{enumerate}
\item[(i)] $S_{d;q,N}$ is IDP;
\item[(ii)] $S_{d;q,N}$ admits a unimodular triangulation;
\item[(iii)] $S_{d;q,N}$ admits a regular unimodular triangulation;
\item[(iv)] either $B=0$, or $B>0$ and every entry of
$N/B=[a_1,\ldots,a_s]^-$ satisfies $a_i\equiv2\pmod m$.
\end{enumerate}
\end{theorem}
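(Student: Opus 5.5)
The implications (iii)$\Rightarrow$(ii)$\Rightarrow$(i) are standard: a regular unimodular triangulation is in particular unimodular, and unimodular triangulations imply IDP. So the work is (i)$\Rightarrow$(iv) and (iv)$\Rightarrow$(iii). Throughout I would work with the group description of the lattice points. The fundamental parallelepiped of the cone over $S_{d;q,N}$ has its lattice points indexed by $k\in\{0,\dots,N-1\}$, namely
\[
\Bigl(\{k\tfrac{-(N-q)}{N}\}, \ldots, \{k\tfrac{-(N-q)}{N}\},\ \{\tfrac{k}{N}\}\Bigr).
\]
Since there are $m=d-1$ equal coordinates, the height of the $k$-th point is
\[
\htop(k)=\frac{k+m\cdot\mathrm{rem}(kq,N)}{N}+(\text{const}),
\]
that is, up to normalization $\htop(k)=\bigl(k+m\,\mathrm{rem}(kq,N)\bigr)/N$ rounded appropriately. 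IDP of a simplex with cyclic group $\Z/N$ is equivalent to the following: every $k$ of height $h\ge2$ decomposes as $k=k_1+k_2 \pmod N$ with heights adding up correctly, i.e.\ with no carries. The height-one elements are exactly those $k$ with $k+m\,\mathrm{rem}(kq,N)<N$ (shifted by one). The degenerate case $B=0$, i.e.\ $N=mq+1$, should make every element a multiple of a single height-one generator, and hence give a unimodular ``stacked'' structure. I would check this first as a base case.

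For (i)$\Rightarrow$(iv), I would run the negative Euclidean algorithm on $N/B$ via the recurrence of \cref{sec:negative-euclid}. Write $N=a_1B-r_1$ with $0\le r_1<B$. The idea is to show that the set of height-one residues forms a ``staircase'' whose corners are governed by the convergent denominators of $N/B$. If some $a_i\not\equiv2\pmod m$, I would locate the first such index and construct an explicit $k$ of height $2$ that is not a sum of two height-one elements. The witness should be one step past a corner, where the required summand would need an $\mathrm{rem}(kq,N)$ coordinate off by a nonmultiple of $m$.

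To organize this I would look for a reduction lemma: $S_{d;q,N}$ is IDP if and only if a smaller simplex $S_{d;q',B}$ (or one with parameters $(B,r_1)$) is IDP, provided $a_1\equiv2\pmod m$, and it fails otherwise. This turns both directions into an induction on $s$ with base case $B=0$.

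For (iv)$\Rightarrow$(iii), I would use the same induction to build a regular unimodular triangulation. Under $a_1\equiv2\pmod m$, the height-one points include a chain of points on a line (or 2-plane) that split $S_{d;q,N}$ into pieces. These pieces are either unimodularly equivalent to the smaller simplex from the reduction or are lattice-equivalent to products or joins of unimodular simplices and dilated segments, which triangulate regularly. Gluing regular triangulations along a regular subdivision, by placing the new points first, preserves regularity.

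The main obstacle is the reduction lemma itself. One must identify precisely which points the $a_i\equiv 2 \pmod m$ condition controls and show that the subdivision pieces really are unimodularly equivalent to an $S$-type simplex with parameters $(B,r_1)$. I would first test this on small $m=2,3$ by computing height-one sets to guess the correct change of coordinates.
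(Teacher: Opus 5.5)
Your (iii)$\Rightarrow$(ii)$\Rightarrow$(i) is fine. The two substantive implications, however, rest on a ``reduction lemma'' that you neither state precisely nor prove, and you concede it is the main obstacle; in fact it is the whole theorem. As sketched, it also points the wrong way for (i)$\Rightarrow$(iv). If $S_{d;q,N}$ is cut into lattice pieces, IDP of the pieces implies IDP of $S_{d;q,N}$, but IDP of $S_{d;q,N}$ need not descend to a piece. (An arithmetic form of your reduction is the paper's remainder test, but it is derived from the continued-fraction criterion, not used to prove it.)

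The missing idea sits in the coordinate you dropped. Your box representative omits $\lambda_0=[Bk]_N/N$, which is exactly where $B$ enters. The paper projects $\lambda\mapsto(N\lambda_0,N\lambda_d)$ onto $K_B=\{(s,t)\in\Z^2:s\equiv Bt\pmod N\}$. The quadrant of $K_B$ has Hilbert basis $U_i=(R_i,T_i)$, given by the ceiling Euclidean algorithm for $N/B$. Each $U_i$ lifts to $Z_i=(R_i,\rho_i,\ldots,\rho_i,T_i)/N$ with $\rho_i=[qT_i]_N$. This lift is indecomposable, because the nonnegative kernel of the projection consists of integer combinations of $E_1,\ldots,E_m$, while the middle coordinates of $Z_i$ are below one. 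IDP then forces $R_i+m\rho_i+T_i=N$. Taking $a_i$ times the $i$-th equation minus its two neighbours gives $(a_i-2)N=m(a_i\rho_i-\rho_{i-1}-\rho_{i+1})$, with the bracket divisible by $N$, so $m\mid a_i-2$.

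Conversely, convexity of $R_i+T_i$ together with the congruence makes every $Z_i$ height one, and (iii) follows in one step with no induction. Triangulate $D=\conv(E_0,c,E_d)$, where $c$ is the barycentre of $F=\conv(E_1,\ldots,E_m)$, by the triangles $\conv(c,Z_i,Z_{i+1})$ plus a regular unimodular triangulation of the lattice polygon in $D$. Then replace $c$ by $F$ in the former, and join each triangle of the latter with each facet of $F$, all under a single lifting.

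The concrete steps you do commit to are also incorrect.
\begin{enumerate}
\item[(a)] For $B=0$ there is no single height-one generator. With $m=q=2$ and $N=5$, both $(0,2,2,1)/5$ and $(0,1,1,3)/5$ are height-one box points.
\item[(b)] The obstruction at the first bad quotient need not have height $2$. The same computation, without assuming IDP, gives $a_ih_i-h_{i-1}-h_{i+1}\equiv0\pmod m$ for the heights $h_j$ of the $Z_j$. So after two height-one lifts, the next one has height $\equiv a_i-1\pmod m$. For $(d,q,N)=(4,3,14)$ one has $B=4$ and $N/B=[4,2]^-$, and $Z_2=(2,12,12,12,4)/14$ (your $k=4$) is indecomposable of height $3$. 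Height-two obstructions do exist here (e.g.\ $k=7$), but not where your plan looks. In any case, all you need is an indecomposable element of height $\ne1$.
\item[(c)] Gluing regular triangulations of the cells of a regular subdivision does not give a regular triangulation in general. The twisted triangulation of the ``mother of all examples'' with homothetic inner triangle refines the regular subdivision into that triangle and three trapezoids, yet is not regular. An inductive gluing would therefore need the pieces' triangulations to come from one global lifting, as in the paper.
\end{enumerate}
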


The geometric equivalence in \cref{thm:main-constant} is a specialization of the two-parameter theory of Dais, Haus, and Henk
\cite[Theorem~5.13]{DHH}; see also \cite[Theorem~7.3]{DHZ}.
In their terminology, a basic triangulation is unimodular and a coherent triangulation is regular.
The continued-fraction mechanism also appears in the more general criterion of Davis, Logvinenko, and Reid
\cite[Section~2.3 and Theorem~2.5]{DLR}. 
Sato \cite[Theorem~3.6]{Sato} states the displayed congruence under the additional hypothesis $\gcd(N,B)=1$.

Thus the geometric equivalence and the continued-fraction method have precedents in that literature. 
Our formulation expresses them directly in the parameters of the Hermite simplex. We give explicit
lattice calculations valid also when $\gcd(N,B)>1$, a regular triangulation in these coordinates, and the remainder test of \cref{prop:remainder-test}.

As a generalization of \cref{Question3.14}, Bruckamp, Caicedo, and Juhnke also posed the following open problem.

\begin{Question}(\cite[Question~3.13]{BCJ})\label{Question3.13}
Can one find effective criteria on $\avec\in \mathbb{N}^d$ ensuring that the one-row Hermite normal form simplex $S_{\avec}$ admits a unimodular triangulation or satisfies the integer decomposition property? 
\end{Question}

For arbitrary one-row simplices, integer decomposition property can be tested using a single integer-valued function. Write
$A=\sum_{i=1}^{d-1}a_i$ and set
\begin{equation}\label{eq:height-intro}
H_{\avec}(t)=\sum_{i=1}^{d-1}\ceil{\frac{a_it}{N}}-\floor{\frac{(A-1)t}{N}}, \qquad t\in\{0,\ldots,N-1\},
\end{equation}
where $\floor{x}$ and $\ceil{x}$ are respectively the greatest integer at most $x$ and the least integer at least $x$.
As shown in \cref{lem:height-and-mapping}, $H_{\avec}(t)$ is the height of a distinguished lattice point in the homogenized cone of
$S_{\avec}$; this cone and its height are defined in \cref{sec:cones-and-heights}.

\begin{theorem}\label{thm:main-general}
Let $S_{\avec}$ be as in \eqref{eq:one-row-family}. It is IDP if and only if, for every $1\le t<N$ with
$H_{\avec}(t)\ge2$, there is an integer $s$ satisfying
\[1\le s<t,\qquad H_{\avec}(s)=1,\qquad H_{\avec}(t-s)=H_{\avec}(t)-1.
\]
If this condition fails at $t$, then the point
\[ p_t=\left(\ceil{\frac{a_1t}{N}},\ldots,\ceil{\frac{a_{d-1}t}{N}},t\right)\in H_{\avec}(t)S_{\avec}\cap\Z^d
\]
is not a sum of $H_{\avec}(t)$ lattice points of $S_{\avec}$. The test requires $O(dN+N^2)$ arithmetic operations and
$O(N+d)$ integer storage locations.
\end{theorem}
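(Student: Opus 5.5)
We work in the homogenized cone $C=\cone(\{(v,1):v\in\vertices S_{\avec}\})\subset\R^{d+1}$, whose lattice points at height $h$ are the points of $hS_{\avec}\cap\Z^d$. The proof has three parts: describe the lattice points of the fundamental parallelepiped, reduce IDP to a statement about those points, and then check that statement using only the function $H_{\avec}$.

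\textbf{Fundamental parallelepiped.} The generators are $(0,1),(e_1,1),\dots,(e_{d-1},1),(\avec,1)$, and their determinant is $N$. So the half-open parallelepiped $\Pi$ contains exactly $N$ lattice points, one in each class of $\Z^{d+1}$ modulo the generator lattice. The last coordinate separates these classes: it equals $t+kN$ for the coefficient of $(\avec,1)$. Hence for each $t\in\{0,\dots,N-1\}$ there is a unique point $q_t\in\Pi$ whose $(\avec,1)$-coefficient is $t/N$. Solving for the remaining coefficients in $[0,1)$ gives first coordinates $\ceil{a_it/N}$, since the coefficient of $e_i$ is $\ceil{a_it/N}-a_it/N$. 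The height is then forced by the $(0,1)$-coefficient lying in $[0,1)$, which yields the formula $H_{\avec}(t)$. I take this identification from \cref{lem:height-and-mapping}. In particular $q_t=(p_t,H_{\avec}(t))$.

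\textbf{Reduction to parallelepiped points.} Every lattice point of $C$ is $q_t$ plus a nonnegative integer combination of the generators, and the generators have height $1$. Therefore $S_{\avec}$ is IDP if and only if every $q_t$ with $H(t)\ge2$ is a sum of $H(t)$ lattice points of height one. The height-one points are the vertices together with the $q_s$ having $H(s)=1$, because a height-one point that is not a vertex must lie in $\Pi$.

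\textbf{Sufficiency.} Induct on $H(t)$. If $s$ is as in the statement, compute $q_s+q_{t-s}$. Its last coordinate is $t$ and its height is $1+H(t)-1=H(t)$. Its $\avec$-coefficient is $s/N+(t-s)/N=t/N$. So $q_s+q_{t-s}-q_t$ lies in the generator lattice with $(\avec,1)$-coefficient $0$ and height $0$. The remaining coefficients $c_i$ are integers. Each lies in $(-1,2)$, and their total sum is $0$.

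Since $\ceil{a_is/N}+\ceil{a_i(t-s)/N}\ge\ceil{a_it/N}$, each $e_i$-coefficient $c_i$ is $\ge0$. The $(0,1)$-coefficient is then $-\sum_i c_i\le 0$. It is also $>-1$, because $q_s$ and $q_{t-s}$ both lie in $\Pi$ and so their coefficients are nonnegative. Hence the $(0,1)$-coefficient is $0$, which forces every $c_i=0$, and so $q_t=q_s+q_{t-s}$ exactly. By induction $q_{t-s}$ decomposes into $H(t)-1$ height-one points, so $q_t$ decomposes into $H(t)$.

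\textbf{Necessity.} Suppose $q_t=\sum_{j=1}^{h}x_j$ with $h=H(t)\ge2$ and each $x_j$ of height one. Group the $x_j$ into vertices and non-vertex points $q_{s_j}$.

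First, no vertex can appear. Subtracting a vertex from a point of $\Pi$ makes the corresponding coefficient negative, and the result would have to be a lattice point of $C$; one checks this forces the rest to lie outside $C$. The coefficient expansion of $q_t$ in the simplicial cone is unique, and each summand has nonnegative coefficients, so the summands' coefficients add up to those of $q_t$, all of which are $<1$. A vertex contributes coefficient $1$ to its own generator, which is impossible.

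So all summands are $q_{s_j}$ with $H(s_j)=1$ and $\sum_j s_j/N=t/N$, which gives $\sum_j s_j=t$ exactly. Take $s=s_1$. Then $q_t-q_s$ is a sum of $h-1$ height-one points, and its coefficients are again all in $[0,1)$, so it lies in $\Pi$. Its $\avec$-coefficient is $(t-s)/N$, so $q_t-q_s=q_{t-s}$ and $H(t-s)=h-1$. This establishes the condition, and the contrapositive gives the statement about $p_t$.

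\textbf{Complexity.} Computing all $H(t)$ costs $O(dN)$ operations and the $N$ values can be stored in $O(N+d)$ space. The double loop over $t$ and $s$ costs $O(N^2)$.

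The main obstacle is the exact-coefficient bookkeeping: showing that the sum of two parallelepiped points that matches in height and in the $\avec$-coefficient is itself exactly a parallelepiped point. I would handle it uniformly through uniqueness of coefficients in the simplicial cone, as above.
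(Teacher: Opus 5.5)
Your proposal is correct and follows essentially the same route as the paper. You pass to the $N$ parallelepiped (box) representatives, exclude vertex summands because every coefficient of $q_t$ is less than one, and use uniqueness of box representatives to identify $q_t-q_{s_1}$ with $q_{t-s_1}$. Your sufficiency step (subadditivity of the ceiling plus integrality of the $(0,1)$-coefficient) is a coordinatewise version of the paper's observation that $\lambda(s)+\lambda(t-s)-\lambda(t)\in\{0,1\}^{d+1}$ has coordinate sum zero.

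Two small points are cosmetic. With your convention $(v,1)$ the ``last coordinate'' equal to $t$ should be $x_d$, and you should state $1\le s_1<t$, which holds since every $s_j\ge1$ and $h\ge2$.
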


This is a residue-coordinate formulation of the standard Hilbert-basis criterion for IDP; see \cite[Chapter~2]{BG} and
\cite[Section~1.2.2]{HPPS}. The underlying cyclic lattice and its height enumeration are described in \cite[Section~2.2]{BBCHV}.
Related floor-function criteria were developed by Braun, Davis, and Solus \cite[Theorem~2.3 and Corollary~2.4]{BDS} for the
simplex family described at the start of \cref{sec:residue}. Here the same decomposition principle gives a height-decreasing
test for all one-row parameters, together with a decomposition procedure and the obstruction $p_t$. 

For the triangulation part of \cref{Question3.13}, we apply the standard compatibility and lifting criteria to the lattice points
computed by \cref{thm:main-general}. These give finite searches, not a structural classification of arbitrary parameter vectors.
Related finite formulations already occur in \cite[Theorem~4 and Corollary~5]{FZ}; regularity is characterized
by linear inequalities in \cite[Section~2.3]{DRS}.
The distinction from IDP is necessary: the simplex $S_{(34,31,39)}$ is IDP but has no unimodular triangulation.
In \cref{ex:39} we identify it explicitly with the example in \cite[Example~10]{FZ}, also recorded in \cite[Note~6.2(iii)]{DHZ}.

The paper is organized as follows. 
After fixing the conventions in \cref{sec:preliminaries}, we prove
\cref{thm:main-ehrhart} in \cref{sec:ehrhart} and
\cref{thm:main-general} in \cref{sec:residue}.
\cref{sec:constant} develops the continued-fraction criterion and the
remainder algorithm. \cref{sec:general-triangulations} treats the
finite triangulation tests, the preceding counterexample, and a
construction that reduces the dimension.
Finally, \cref{Section-Finily} contains the concluding remarks.

\section{Preliminaries}\label{sec:preliminaries}

\subsection{Lattice conventions and normalized volume}\label{sec:lattice-conventions}

A \emph{lattice} of rank $r$ in a real vector space is a subgroup
$M=\Z b_1+\cdots+\Z b_r$ generated by linearly independent vectors.
Its real span is denoted $M_{\R}$. The \emph{covolume} of a
full-rank lattice in a Euclidean space is the Euclidean volume of
$\{\sum_{i=1}^r\theta_i b_i:0\le\theta_i<1\}$.
For an affine subspace $U$ containing a lattice point $u_0$, its
\emph{induced affine lattice} is
\[
 U\cap M=u_0+M_U,\qquad M_U=M\cap(U-u_0).
\]
All volumes and unimodularity statements for a polytope in $U$ are
relative to $M_U$, assumed to span $U-u_0$.

Write $\aff(X)$ for the affine hull of a set $X$, and
$\vertices(P)$ for the vertex set of a polytope $P$.
A \emph{face} of $P$ is the set where an affine function attains
its minimum on $P$; we also include the empty face.
A \emph{facet} is a face of codimension one, and a vertex is a
zero-dimensional face. Interior and boundary for a polytope are
taken relative to its affine hull unless stated otherwise.
For an $r$-dimensional lattice polytope $P$, take $U=\aff(P)$.
Its \emph{normalized volume} is $r!$ times its volume measured
so that a fundamental
parallelepiped of $M_U$ has volume one. In particular,
\[
 \nvol_M\bigl(\conv(v_0,\ldots,v_r)\bigr)
 =\left|\det_{M_U}(v_1-v_0,\ldots,v_r-v_0)\right|,
 \qquad U=\aff(v_0,\ldots,v_r),
\]
where the determinant is taken in any basis of $M_U$.
A lattice simplex is \emph{unimodular} if this determinant has
absolute value one. We omit $M$ from $\nvol_M$ when the lattice is
understood. For full-dimensional polytopes in $\R^d$ with lattice
$\Z^d$, this is $d!$ times Euclidean volume.
An \emph{affine unimodular map} is an affine bijection of affine
spaces that maps one induced affine lattice bijectively onto the
other.

We use the row convention for Hermite normal form: an integral
lower triangular matrix $H=(h_{ij})_{1\le i,j\le d}$ has positive
diagonal entries and satisfies $0\le h_{ij}<h_{ii}$ for $j<i$.
The associated simplex is the convex hull of the origin and the rows
of $H$; compare \cite[Section~2.1]{BBCHV}.
The terminology in \eqref{eq:one-row-family} and \eqref{eq:two-row-family}
refers to this convention. Later, homogenized vertex matrices will
instead have vertices as \emph{columns}; these two conventions will
be kept distinct.

For an integer $u$ and a positive integer $N$, write
\[
 [u]_N=u-N\floor{u/N}\in\{0,\ldots,N-1\}
\]
for the least nonnegative residue. For a real number $x$, its
fractional part is $\{x\}=x-\floor{x}$. Empty sums and products
have values zero and one, respectively, and $\conv(\varnothing)
=\varnothing$.

\subsection{Cones}\label{sec:cones-and-heights}

For vectors $u_1,\ldots,u_k$, let
$\cone(u_1,\ldots,u_k)=\{\sum_i\theta_i u_i:\theta_i\ge0\}$.
A cone generated by finitely many lattice vectors is a
\emph{rational polyhedral cone}; it is \emph{pointed} if
$C\cap(-C)=\{0\}$. Its lattice points form an additive
\emph{semigroup}, namely a set containing zero and closed under
addition. For a pointed rational cone $C$ in a lattice $M$, its
\emph{Hilbert basis} $\Hilb_M(C)$ is the set of nonzero elements
of $C\cap M$ that are not sums of two nonzero elements of
$C\cap M$. This is the unique finite minimal generating set of
$C\cap M$; see \cite[Chapter~2]{BG}.
A cone is \emph{simplicial} if its generating rays are linearly
independent. Such a cone is \emph{unimodular} if the primitive
lattice generators of its rays form a basis of
$M\cap\operatorname{span}_{\R}(C)$; a ray generator is primitive
if it is the first nonzero lattice point on that ray.

The \emph{homogenized cone} of a lattice polytope $P\subset\R^d$
is
\[
 C(P)=\cone\{(1,v):v\in\vertices(P)\}
     =\{(h,x):h\ge0,\ x\in hP\}.
\]
Its lattice is $\Z\times\Z^d$, restricted to its linear span
when necessary, and its \emph{height} is the first coordinate $h$.
Every nonzero lattice point of $C(P)$ has positive integral height.
The definition of IDP is therefore equivalent to
\begin{equation}\label{eq:idp-hilbert-criterion}
 P\text{ is IDP}
 \quad\Longleftrightarrow\quad
 \Hilb_{\Z^{d+1}}(C(P))=\{(1,x):x\in P\cap\Z^d\}.
\end{equation}
This standard criterion is also discussed in
\cite[Section~1.2.2]{HPPS}.

For a full-dimensional lattice simplex
$P=\conv(v_0,\ldots,v_d)$, let
\[
 V_P=\bigl((1,v_0)^T\ \cdots\ (1,v_d)^T\bigr),\qquad
 \Lambda_P=V_P^{-1}\Z^{d+1}.
\]
Let $E_0,\ldots,E_d$ be the standard basis of $\R^{d+1}$ and set
$\Delta=\conv(E_0,\ldots,E_d)$.
The map $V_P$ identifies
$\Lambda_P\cap\R_{\ge0}^{d+1}$ with $C(P)\cap\Z^{d+1}$,
and $\Delta\cap\Lambda_P$ with the height-one slice.
In these coordinates the height is
$\htop(\lambda)=\sum_{i=0}^d\lambda_i$.
The \emph{box representatives} are the elements of
\[
 \operatorname{Box}(P)=\Lambda_P\cap[0,1)^{d+1}.
\]
They represent the finite group $\Lambda_P/\Z^{d+1}$, whose order
is $|\det V_P|=\nvol(P)$. Their images form the lattice points of
the half-open fundamental parallelepiped
\[
 \Pi_P=\left\{\sum_{i=0}^d\theta_i(1,v_i):0\le\theta_i<1\right\}.
\]
Every $\lambda\in\Lambda_P\cap\R_{\ge0}^{d+1}$ has the unique
coordinatewise decomposition
\begin{equation}\label{eq:box-decomposition-general}
 \lambda=n+\beta,\qquad
 n\in\Z_{\ge0}^{d+1},\quad \beta\in\operatorname{Box}(P).
\end{equation}
Indeed, $n_i=\floor{\lambda_i}$ and $\beta_i=\{\lambda_i\}$.
Consequently, testing IDP for a simplex reduces to decomposing its
finitely many box representatives into height-one points.
For this lattice description in the one-row case, see
\cite[Section~2.2]{BBCHV}.

\subsection{Triangulations and regular subdivisions}\label{sec:lifting}

A \emph{polyhedral subdivision} of a polytope $P$ is a finite
collection of polytopes, called cells, closed under taking faces,
whose union is $P$ and such that the intersection of any two cells
is a face of each. The empty set is allowed as a face.
A \emph{triangulation} is a subdivision whose cells are simplices.
For a finite set $\mathcal A$ of points, a triangulation of
$\mathcal A$ means a triangulation of $\conv(\mathcal A)$ with
vertices in $\mathcal A$; it is not required to use every point.
It is \emph{full} if every point of $\mathcal A$ is a vertex.
A lattice triangulation has lattice vertices and is unimodular if
all its maximal simplices are unimodular. A subdivision
$\mathcal S'$ \emph{refines} $\mathcal S$ if every cell of
$\mathcal S'$ is contained in a cell of $\mathcal S$.
These conventions agree with \cite[Section~1.1]{HPPS}.

A \emph{lifting function} on $\mathcal A$ is a function
$w:\mathcal A\to\R$. An affine function $\ell$ on
$\aff(\mathcal A)$ is a \emph{lower supporting function} if
$\ell(p)\le w(p)$ for all $p\in\mathcal A$.
The nonempty sets
\[
 \conv\{p\in\mathcal A:\ell(p)=w(p)\}
\]
are the projections of the lower faces of
$\conv\{(p,w(p)):p\in\mathcal A\}$ and form the
\emph{regular subdivision} induced by $w$.
A triangulation is \emph{regular}, or \emph{coherent}, if it is
induced in this way, with the equality sets of the maximal lower
supporting functions equal to the vertex sets of the maximal
simplices. Thus, for each such simplex $\sigma$, its supporting
function satisfies
\begin{equation}\label{eq:strict-lifting}
 \ell_\sigma(p)=w(p)\quad(p\in\vertices(\sigma)),\qquad
 \ell_\sigma(q)<w(q)\quad(q\in\mathcal A\setminus\vertices(\sigma)).
\end{equation}
See \cite[Section~1.1, Equations~(1.1)--(1.2)]{HPPS}.

We record three standard facts in the form needed below.

\begin{lemma}[{\cite[Sections~1.1 and~1.2.2]{HPPS}}]\label{lem:unimodular-idp}
A lattice polytope admitting a unimodular triangulation is IDP.
A unimodular simplex contains no lattice points other than its
vertices. In particular, every unimodular triangulation of a lattice
polytope uses all its lattice points.
\end{lemma}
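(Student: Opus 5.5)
We prove the three assertions of \cref{lem:unimodular-idp} in reverse order, since the later ones feed into the first.

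\textbf{Unimodular simplices have no extra lattice points.} Let $\sigma=\conv(v_0,\ldots,v_r)$ be unimodular in the induced affine lattice $u_0+M_U$. Then $v_1-v_0,\ldots,v_r-v_0$ form a basis of $M_U$. Take a lattice point $p\in\sigma$ and write $p=\sum_i\theta_iv_i$ with $\theta_i\ge0$ and $\sum_i\theta_i=1$. Then
\[
p-v_0=\sum_{i\ge1}\theta_i(v_i-v_0)\in M_U .
\]
Since the $v_i-v_0$ form a basis, each $\theta_i$ with $i\ge1$ is an integer. Together with $\theta_i\ge0$ and $\sum_i\theta_i=1$, exactly one $\theta_i$ equals $1$, so $p$ is a vertex.

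\textbf{Unimodular triangulations use all lattice points.} Every lattice point of $P$ lies in some maximal simplex of the triangulation, because the maximal simplices cover $P$. By the previous step it is a vertex of that simplex.

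\textbf{Unimodular triangulation implies IDP.} Let $\mathcal T$ be a unimodular triangulation of $P$, and let $x\in hP\cap\Z^d$ with $h\ge1$. Since the dilates $h\sigma$, $\sigma\in\mathcal T$ maximal, cover $hP$, we may choose a maximal simplex $\sigma=\conv(v_0,\ldots,v_r)$ with $x\in h\sigma$. If $P$ is not full-dimensional, we work inside $\aff(P)$ with the induced lattice.

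Pass to the homogenized cone $C(\sigma)$. Unimodularity of $\sigma$ means $\det V_\sigma=\pm1$, so $\Lambda_\sigma=V_\sigma^{-1}\Z^{r+1}=\Z^{r+1}$. Hence the point $(h,x)\in C(\sigma)\cap\Z^{r+1}$ has coordinates $\lambda\in\Z_{\ge0}^{r+1}$ with $\sum_i\lambda_i=h$. In other words, $(h,x)=\sum_i\lambda_i(1,v_i)$, and therefore
\[
x=\sum_{i=0}^{r}\lambda_iv_i ,
\]
a sum of exactly $h$ vertices of $\sigma$, counted with multiplicity. These vertices are lattice points of $P$, so $P$ is IDP.

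The only delicate point is this last step: one must check that unimodularity relative to $M_U$ really gives $\Lambda_\sigma=\Z^{r+1}$. This holds because $\det V_\sigma$ equals the determinant of $(v_1-v_0,\ldots,v_r-v_0)$, obtained by column operations after adjoining the height coordinate, and that determinant is $\pm1$. Everything else is routine.
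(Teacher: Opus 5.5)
Your proof is correct. The paper does not prove this lemma; it only cites \cite[Sections~1.1 and~1.2.2]{HPPS}. So your write-up is a self-contained version of the standard argument, not a competing route.

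Your first two parts are the usual argument that the barycentric coordinates of a lattice point in a unimodular simplex are integers. Your third part amounts to the observation that a unimodular simplex $\sigma$ has $\Lambda_\sigma=\Z^{r+1}$, i.e.\ $\operatorname{Box}(\sigma)=\{0\}$. The decomposition \eqref{eq:box-decomposition-general} then writes every lattice point of $C(\sigma)$ as a nonnegative integer combination of the $(1,v_i)$. This fits the paper's own homogenization framework, and it is the same mechanism behind the Hilbert-basis criterion the paper uses later.

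The one place to tighten is the lower-dimensional case. ``Working inside $\aff(P)$'' is not compatible with dilation when $0\notin\aff(P)$, since then $hP\not\subset\aff(P)$. Also, $V_\sigma$ is no longer square, so $\det V_\sigma$ and $\Z^{r+1}$ need reinterpreting. There are two easy fixes:
\begin{itemize}
\item Translate a vertex of $P$ to the origin first. IDP is invariant under lattice translations, because $x\in hP$ iff $x-hv_0\in h(P-v_0)$ and decompositions correspond termwise.
\item Alternatively, state your key claim as follows: $(1,v_0),\ldots,(1,v_r)$ is a basis of $\operatorname{span}_{\R}C(\sigma)\cap\Z^{d+1}=\Z(1,v_0)\oplus(\{0\}\times M_U)$. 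This follows from your column operation, because $v_1-v_0,\ldots,v_r-v_0$ is a basis of $M_U$.
\end{itemize}
This is a presentational fix, not a gap.
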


\begin{lemma}[{\cite[Lemma~2.3.15\textup{(1), (3)}]{DRS}}]\label{lem:small-perturbation}
Let $\mathcal A$ be a finite point configuration and let
$w_0\in\R^{\mathcal A}$ be a lifting function. There is a
neighborhood of $w_0$ in which every lifting induces a subdivision
refining the subdivision induced by $w_0$. Outside a finite union
of proper linear hyperplanes in $\R^{\mathcal A}$, lifting
functions induce triangulations. Hence a sufficiently small generic
perturbation of $w_0$ induces a regular triangulation refining its
subdivision.
\end{lemma}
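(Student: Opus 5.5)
The statement to prove is \cref{lem:small-perturbation}: nearby liftings refine, generic liftings triangulate, and therefore a small generic perturbation gives a regular triangulation refining the subdivision of $w_0$. I would argue directly from the description of regular subdivisions as projections of lower faces in \cref{sec:lifting}.

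\textbf{Refinement near $w_0$.} Fix a maximal cell $F$ of the subdivision induced by $w_0$, with lower supporting function $\ell_F$. Let $\mathcal A_F=\{p:\ell_F(p)=w_0(p)\}$. By finiteness,
\[
\delta_F=\min_{q\notin\mathcal A_F}\bigl(w_0(q)-\ell_F(q)\bigr)>0.
\]
Now take a lifting $w$ with $\|w-w_0\|_\infty<\eps$ and a maximal cell $G$ of the subdivision induced by $w$, with supporting function $\ell$. Choose a point $x$ in the relative interior of $G$; it lies in some maximal cell $F$ of $w_0$.

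I would compare the lower envelopes $g_w(x)=\min\{\sum\mu_pw(p):\sum\mu_p p=x,\ \mu\ge0,\ \sum\mu_p=1\}$. These envelopes depend $\eps$-continuously on $w$, in sup norm.

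The key claim is that every $q\notin\mathcal A_F$ lies strictly above $\ell$ once $\eps$ is small relative to $\min_F\delta_F$. The idea is that $\ell$ and $\ell_F$ agree up to $O(\eps)$ on $F$, because both equal the envelope there up to $\eps$. Since $F$ is full-dimensional, an affine function that is $O(\eps)$-small on $F$ is $O(\eps)$-small on all of $\conv\mathcal A$, with a constant depending only on the configuration. Therefore $\ell(q)\le \ell_F(q)+C\eps< w_0(q)-\delta_F+C\eps<w(q)$.

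This shows that the vertex set of $G$ lies in $\mathcal A_F$, so $G\subseteq F$. This quantitative comparison is the main obstacle. The subtle point is choosing $x$ so that $G$ meets the interior of some $F$, which is possible because the maximal cells of $w_0$ cover $\conv\mathcal A$.

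\textbf{Genericity.} A regular subdivision fails to be a triangulation exactly when some lower face contains $r+2$ affinely dependent points $p_0,\dots,p_{r+1}$ lying on one affine function with $w$, where $r=\dim\aff\mathcal A$. An affinely dependent subset carries an affine relation $\sum c_ip_i=0$ with $\sum c_i=0$ and $c\ne0$. Coplanarity of the lifted points forces the linear condition $\sum c_iw(p_i)=0$.

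Each such condition is a proper linear hyperplane in $\R^{\mathcal A}$, and there are finitely many relations up to scaling, one chosen per dependent minimal subset. Off their union, every cell is the projection of a lower face whose points are affinely independent, so every cell is a simplex.

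\textbf{Combining.} The complement of finitely many hyperplanes is dense, so it meets every neighborhood of $w_0$. Any $w$ in the intersection induces a triangulation that refines the subdivision of $w_0$, and this triangulation is regular by definition. Finally, \eqref{eq:strict-lifting} holds for it: since the triangulation is induced by $w$, the equality set of each maximal supporting function is exactly the vertex set of the corresponding simplex.
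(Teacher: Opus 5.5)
\textbf{There is a genuine gap in your refinement step.} Your genericity argument (one linear relation per circuit, giving finitely many proper linear hyperplanes) and your density argument are fine; the paper itself gives no proof here and only cites \cite[Lemma~2.3.15]{DRS}.

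The gap is the sentence claiming that $\ell$ and $\ell_F$ agree up to $O(\eps)$ on $F$ ``because both equal the envelope there up to $\eps$''. That holds for $\ell_F$, but $\ell$ coincides with $g_w$ only on $G$; elsewhere you only know $\ell\le g_w$. On $F$ you therefore get just the one-sided bound $\ell\le g_w\le g_{w_0}+\eps=\ell_F+\eps$. The reverse bound $\ell\ge\ell_F-\eps$ holds only on $G$. Two-sided control is thus available only on $G\cap F$. Since $x$ is an arbitrary interior point of $G$, nothing bounds the size of $G\cap F$ from below independently of $w$; typically $G$ is a proper piece of $F$, which is the whole point of refining. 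An affine function that is $\eps$-small on a thin sliver can be large elsewhere on $\conv\mathcal A$, so your constant $C$ is not uniform. Moreover, uniform agreement $|\ell-\ell_F|\le C\eps$ is itself a consequence of $\mathcal A_G\subseteq\mathcal A_F$, since both functions are then within $\eps$ on an affinely spanning set. As written, the argument is therefore circular. The ``subtle point'' you flag, that $G$ meets the interior of some $F$, is automatic and does not address this.

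The missing idea is to choose the test point so that its barycentric weights are bounded below. Take $x=\frac1{|\mathcal A_G|}\sum_{p\in\mathcal A_G}p$, and let $F$ be a maximal cell of $w_0$ containing $x$. Because $\ell=w$ on $\mathcal A_G$ and $\ell\le g_w$, you get $g_w(x)=\frac1{|\mathcal A_G|}\sum_{p\in\mathcal A_G}w(p)$. Also $g_{w_0}(x)=\ell_F(x)$. Hence
\[
\frac1{|\mathcal A_G|}\sum_{p\in\mathcal A_G}\bigl(w_0(p)-\ell_F(p)\bigr)\le g_w(x)+\eps-g_{w_0}(x)\le 2\eps .
\]
Every summand is nonnegative, and it is at least $\delta_F$ when $p\notin\mathcal A_F$. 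So $2\eps|\mathcal A|<\min_F\delta_F$ forces $\mathcal A_G\subseteq\mathcal A_F$, which is the refinement statement. Alternatively, you could keep your approach by choosing $x$ so that $G\cap F$ contains a ball of radius bounded below uniformly in $w$, but the centroid choice avoids that geometry. With this replacement, the rest of your proof goes through.
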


Here $\R^{\mathcal A}$ is the vector space of real-valued functions
on $\mathcal A$; ``generic'' refers to avoiding the exceptional
hyperplanes in the lemma.
The next statement combines the planar emptiness criterion with
regular refinement; see \cite[Section~1.1, Proposition~1.1 and
Lemma~2.1]{HPPS}.

\begin{lemma}\label{lem:polygon}
Every triangulation of a lattice polygon using all its lattice
points is unimodular. Moreover, every lattice polygon admits a
regular triangulation using all its lattice points, and hence a
regular unimodular triangulation.
\end{lemma}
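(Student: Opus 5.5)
The statement to prove is \cref{lem:polygon}. It has two claims: every full triangulation of a lattice polygon is unimodular, and every lattice polygon admits a regular triangulation using all its lattice points.

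\emph{First claim.} I would reduce to the standard planar fact that an empty lattice triangle has normalized volume one. Let $\mathcal T$ be a triangulation of the lattice polygon $P$ that uses every point of $P\cap\Z^2$ as a vertex. Take a maximal triangle $\sigma\in\mathcal T$ and suppose some lattice point $p\in\sigma$ is not a vertex of $\sigma$. Since $p\in P\cap\Z^2$, it is a vertex of some cell of $\mathcal T$. Because $\mathcal T$ is a simplicial complex, the cell $\{p\}$ then meets $\sigma$ in a face of $\sigma$. That face must be $\{p\}$ itself, so $p$ is a vertex of $\sigma$, a contradiction. Hence each maximal triangle $\sigma$ contains exactly three lattice points, its vertices.

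To see that such a triangle is unimodular, write $\sigma=\conv(0,u,v)$ after translation. Consider the half-open parallelogram $\{\theta_1u+\theta_2v:0\le\theta_i<1\}$. It contains $|\det(u,v)|$ lattice points. Every such point other than $0$ lies either in $\sigma$ or in the triangle $\conv(u,v,u+v)$; the reflection $x\mapsto u+v-x$ maps the second triangle onto $\sigma$ and preserves the lattice. By emptiness of $\sigma$, the only lattice point in this half-open region is $0$, so $|\det(u,v)|=1$. An alternative is Pick's formula $A=I+B/2-1=0+3/2-1=1/2$.

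\emph{Second claim.} I would use a lifting argument. Let $\mathcal A=P\cap\Z^2$ and take the strictly convex lifting $w(p)=\|p\|^2$. Every point of $\mathcal A$ lies on the paraboloid, and the paraboloid is strictly convex. So each point $(p,w(p))$ is a vertex of the lifted polytope $\conv\{(p,w(p)):p\in\mathcal A\}$, and in particular a vertex of a lower face. Therefore every $p\in\mathcal A$ appears as a vertex of the induced regular subdivision.

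This subdivision may fail to be a triangulation, for instance when four cocircular points span a single cell. To fix this I would apply \cref{lem:small-perturbation}: a small generic perturbation $w'$ of $w$ induces a regular triangulation refining the subdivision induced by $w$. For $w'$ close enough to $w$, each lifted point remains a vertex of the lower hull, because that is an open condition. Hence the refined triangulation is still full, and by the first claim it is unimodular.

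The only subtle point is this step: making sure that fullness survives the perturbation. I would handle it with the openness remark above, or equivalently by noting that a refinement of a subdivision in which every point of $\mathcal A$ is a vertex still has every point of $\mathcal A$ as a vertex.
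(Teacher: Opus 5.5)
Your proof is correct and takes the route the paper indicates. The paper gives no argument of its own: it cites Haase--Paffenholz--Piechnik--Santos for two ingredients, the planar emptiness criterion (an empty lattice triangle is unimodular) and a regular refinement using all lattice points, and you prove both directly. Your lifting $\|p\|^2$, generic perturbation via \cref{lem:small-perturbation}, and check that fullness survives the perturbation are the same device the paper later uses in the proof of \cref{lem:planar-section-decomposition}.
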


\subsection{Ehrhart series and box heights}\label{sec:ehrhart-conventions}

For a lattice polytope $P$ of dimension $r$, its \emph{Ehrhart
series} and \emph{$h^*$-polynomial} are related by
\[
 \sum_{h\ge0}L_P(h)z^h=\frac{h_P^*(z)}{(1-z)^{r+1}},
 \qquad h_P^*(z)=\sum_{j=0}^r h_j^*z^j.
\]
Equivalently,
\begin{equation}\label{eq:binomial-transform}
 L_P(t)=\sum_{j=0}^r h_j^*\binom{t+r-j}{r},
 \qquad
 \binom{x}{r}=\frac{x(x-1)\cdots(x-r+1)}{r!}.
\end{equation}
These are polynomial identities, with $\binom{x}{0}=1$;
see \cite[Chapter~3]{BR}. For a full-dimensional lattice simplex,
\eqref{eq:box-decomposition-general} gives the familiar formula
\begin{equation}\label{eq:hstar-box-general}
 h_P^*(z)=\sum_{\beta\in\operatorname{Box}(P)}z^{\htop(\beta)};
\end{equation}
compare \cite[Section~2.2]{BBCHV}.
The height of a box representative is also called its \emph{age}.
It lies in $\{0,\ldots,d\}$, and only the zero representative has
height zero. The coefficients $h_j^*$ should be distinguished from
the coefficients of $L_P(t)$ in the ordinary power basis, which
are the subject of \cref{sec:ehrhart}.

\section{Ehrhart coefficients of the two-row family}\label{sec:ehrhart}

Our starting point is the formula of Bruckamp, Caicedo, and Juhnke
\cite[Theorems~5.10 and~5.12]{BCJ}:
\begin{equation}\label{eq:hstar-two-row}
 h_{T_{d,N}}^*(z)=1+N\sum_{j=1}^{d-2}z^j+(N-1)z^{d-1},
\end{equation}
and hence
\begin{equation}\label{eq:ehrhart-formula}
L_{d,N}(t)=\binom{t+d}{d}-\binom{t+1}{d}+N\left(\binom{t+d}{d+1}-\binom{t+1}{d+1}\right).
\end{equation}
The second identity follows from \eqref{eq:binomial-transform} by
telescoping Pascal's identity: $\binom{x}{d}=\binom{x+1}{d+1}-\binom{x}{d+1}$.

\subsection{Coefficient ratios}

Let $P(t) = \sum_i p_i t^i$ and $Q(t) = \sum_i q_i t^i$ be polynomials with nonnegative coefficients. The support of $P$ is $\{i:p_i\ne0\}$, and similarly for $Q$. We extend both coefficient sequences by zero outside their supports and define
\begin{equation}\label{eq:coefficient-order}
P \preceq Q \quad \Longleftrightarrow \quad p_i q_j - p_j q_i \ge 0 \quad \text{for all integers } i < j.
\end{equation}
For indices with $p_i,p_j>0$, the corresponding inequality is equivalent to $q_i/p_i\le q_j/p_j$. We use only the definition and the following elementary consequences.

\begin{lemma}\label{lem:order-multiplication}
Suppose that $P \preceq Q$ and let $c > 0$. Then $(t+c)P \preceq (t+c)Q$. Consequently, multiplication by any product of linear factors $(t+c)$ with $c > 0$ preserves the relation $\preceq$.
\end{lemma}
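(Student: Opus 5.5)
The plan is a direct computation with the coefficients of $(t+c)P$ and $(t+c)Q$, checking the defining inequality \eqref{eq:coefficient-order} term by term. Write $P'=(t+c)P$ and $Q'=(t+c)Q$, so that
\[
p'_i=p_{i-1}+cp_i,\qquad q'_i=q_{i-1}+cq_i
\]
(with the zero-extension convention). Both are again polynomials with nonnegative coefficients, so the relation $\preceq$ makes sense for them.

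Fix integers $i<j$ and expand:
\[
p'_iq'_j-p'_jq'_i
=(p_{i-1}q_{j-1}-p_{j-1}q_{i-1})
+c^2(p_iq_j-p_jq_i)
+c(p_{i-1}q_j-p_jq_{i-1})
+c(p_iq_{j-1}-p_{j-1}q_i).
\]
\begin{itemize}
\item The first three brackets are nonnegative directly from $P\preceq Q$, because $i-1<j-1$, $i<j$ and $i-1<j$.
\item The last bracket has index pair $(i,j-1)$. If $i<j-1$ it is nonnegative by hypothesis. If $i=j-1$ it vanishes identically.
\end{itemize}
Since $c>0$, the whole expression is nonnegative, which gives $P'\preceq Q'$.

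The "consequently" part follows by induction on the number of linear factors, applying the single-factor statement repeatedly. The only point to watch is that every intermediate product still has nonnegative coefficients, which holds because each $c>0$.

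I do not expect a real obstacle. The one step needing care is the cross term with indices $(i,j-1)$, where $j-1$ may equal $i$; the trivial vanishing in that case handles it.
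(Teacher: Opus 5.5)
Your proposal is correct and matches the paper's proof. Both use the same four-term expansion of $p'_iq'_j-p'_jq'_i$, and both observe that the $(i,j-1)$ cross term vanishes when $i=j-1$ and is covered by the hypothesis otherwise. Both then finish by induction on the number of linear factors.
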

\begin{proof}
Let $p'_i$ and $q'_i$ denote the coefficients of the polynomials $(t+c)P$ and $(t+c)Q$, respectively. By polynomial multiplication, we have $p'_i = c p_i + p_{i-1}$ and $q'_i = c q_i + q_{i-1}$. For any $i < j$, expanding the cross-product yields
\begin{align*}
p'_i q'_j - p'_j q'_i &= (c p_i + p_{i-1})(c q_j + q_{j-1}) - (c p_j + p_{j-1})(c q_i + q_{i-1}) \\
&= c^2(p_i q_j - p_j q_i) + c(p_i q_{j-1} - p_{j-1} q_i) + c(p_{i-1} q_j - p_j q_{i-1}) + (p_{i-1} q_{j-1} - p_{j-1} q_{i-1}).
\end{align*}
By the assumption \eqref{eq:coefficient-order}, the terms in the first, third, and fourth parentheses are nonnegative. The second term, $p_i q_{j-1} - p_{j-1} q_i$, trivially vanishes when $i = j-1$, and is nonnegative by \eqref{eq:coefficient-order} whenever $i < j-1$. Therefore, $p'_i q'_j - p'_j q'_i \ge 0$, which establishes the assertion for a single linear factor. The general conclusion for a product of such factors follows immediately by induction.
\end{proof}

\begin{lemma}\label{lem:difference-selection}
Suppose that $P(t) = \sum_{j=0}^r p_j t^j$ and $Q(t) = \sum_{j=0}^{r+1} q_j t^j$ are polynomials with positive coefficients on their respective supports, satisfying the condition
\[ P \preceq Q \preceq tP.\]
Let $(c_0, \ldots, c_r)$ be a real sequence such that for each $1 \le j \le r$, the forward difference $c_j - c_{j-1}$ is chosen from the set $\{p_j - p_{j-1}, q_j - q_{j-1}\}$. Then the sequence $(c_0, \ldots, c_r)$ is unimodal.
\end{lemma}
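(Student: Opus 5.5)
The plan is to turn the two relations $P\preceq Q\preceq tP$ into an interlacing of consecutive coefficient ratios. Once that is done, unimodality is immediate: a sequence is unimodal exactly when no positive forward difference follows a negative one. So it suffices to show that a negative difference $c_j-c_{j-1}<0$ at some index forces both candidate differences $p_k-p_{k-1}$ and $q_k-q_{k-1}$ to be negative for every $k>j$. Then whichever candidate is chosen at $k$ is negative, and the sequence increases weakly up to the first negative step and decreases afterwards.

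\textbf{Step 1 (translate the hypotheses).} By \eqref{eq:coefficient-order}, $P\preceq Q$ with $i=j-1$ gives $p_{j-1}q_j-p_jq_{j-1}\ge0$. The coefficients of $tP$ are $p_{i-1}$, so $Q\preceq tP$ with $i=j-1$ gives $q_{j-1}p_{j-1}-q_jp_{j-2}\ge0$. The relevant indices are $1\le j\le r$, where $p_j,p_{j-1},q_j,q_{j-1}$ occur. I will first argue that these are all positive: the supports are intervals starting at $0$ ($0\le j\le r$ for $P$, $0\le j\le r+1$ for $Q$), which is how the lemma is used for Ehrhart polynomials. Then set $\rho_j=p_j/p_{j-1}$ and $\sigma_j=q_j/q_{j-1}$. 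The two inequalities become
\[
\rho_j\le\sigma_j\qquad(1\le j\le r),\qquad\qquad \sigma_j\le\rho_{j-1}\qquad(2\le j\le r).
\]
Chaining them gives the interlacing $\rho_j\le\sigma_j\le\rho_{j-1}\le\sigma_{j-1}$. In particular both $(\rho_j)$ and $(\sigma_j)$ are nonincreasing.

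\textbf{Step 2 (propagation of a descent).} Suppose the chosen difference at $j$ is negative. Then either $\rho_j<1$ or $\sigma_j<1$, and since $\rho_j\le\sigma_j$ we get $\rho_j<1$ in both cases. For any $k>j$ we then have $\rho_k\le\rho_j<1$ and $\sigma_k\le\rho_{k-1}\le\rho_j<1$. Hence $p_k-p_{k-1}<0$ and $q_k-q_{k-1}<0$, so $c_k-c_{k-1}<0$ regardless of the choice made at $k$. Let $j_0$ be the first index with a negative step, or $r+1$ if there is none. Then $c_0\le\cdots\le c_{j_0-1}\ge c_{j_0}\ge\cdots\ge c_r$, which is unimodality.

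\textbf{Main obstacle.} The only delicate point is the support and positivity bookkeeping in Step 1. Dividing requires $p_{j-1},q_{j-1}>0$ for all relevant $j$; that is, the supports must contain $\{0,\dots,r\}$. If the intended reading of ``positive on their supports'' allows gaps, I would instead run Step 2 directly in cross-multiplied form. Then $p_k<p_{k-1}$ would be derived from $p_j<p_{j-1}$ using products of the inequalities in \eqref{eq:coefficient-order} for pairs $(i,j)$ with $i<j$ not necessarily adjacent; the non-adjacent inequalities are available in the definition. Zero coefficients would be handled separately, since a zero $p_k$ after a positive one already forces a descent. I expect this to be routine once the interval-support case is settled.
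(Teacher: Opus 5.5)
Your proof is correct and is essentially the paper's. Both arguments turn $P\preceq Q\preceq tP$ into the interlacing $\rho_j\le\sigma_j\le\rho_{j-1}$, under the same tacit assumption that the supports are the full intervals $\{0,\dots,r\}$ and $\{0,\dots,r+1\}$. Your propagation of the first negative step is a tidier packaging of the paper's case split around the peak $k$ of $(p_j)$, where only the step $k+1$ is left unrestricted.

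The one thing to drop is the claim that gapped supports would be routine. Read literally, the lemma is false in that setting: for $r=2$, $P=1+t^2$ and $Q=0$ satisfy $P\preceq Q\preceq tP$, yet $c=(0,-1,0)$ is admissible.
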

\begin{proof}
Applying the relation $P \preceq Q$ to consecutive indices yields
\[\frac{p_j}{p_{j-1}} \le \frac{q_j}{q_{j-1}} \qquad (1 \le j \le r).
\]
Similarly, since the coefficient of $t^j$ in $tP$ is $p_{j-1}$, evaluating the relation $Q \preceq tP$ at consecutive indices gives
\[\frac{q_j}{q_{j-1}} \le \frac{p_{j-1}}{p_{j-2}} \qquad (2 \le j \le r+1).
\]
Combining these inequalities demonstrates that the ratio $p_j/p_{j-1}$ is nonincreasing with respect to $j$. Consequently, the positive sequence $(p_j)_{j=0}^r$ is log-concave, and therefore unimodal. Thus, there exists an index $k \in \{0, \ldots, r\}$ such that
\[ p_0 \le \cdots \le p_k \ge p_{k+1} \ge \cdots \ge p_r.
\]

For $1 \le j \le k$, the monotonicity up to $k$ ensures $p_j/p_{j-1} \ge 1$. The first ratio inequality then implies $q_j/q_{j-1} \ge p_j/p_{j-1} \ge 1$, which guarantees $q_j - q_{j-1} \ge 0$. Hence, both candidate values for the difference $c_j - c_{j-1}$ are nonnegative in this range.

Conversely, for $k+2 \le j \le r$, the descent past $k$ means $p_{j-1}/p_{j-2} \le 1$. The second ratio inequality implies $q_j/q_{j-1} \le p_{j-1}/p_{j-2} \le 1$, ensuring both candidate differences are nonpositive.

If $k=r$, all differences $c_j-c_{j-1}$ are nonnegative and the claim follows. Otherwise, only the sign of $c_{k+1}-c_k$ remains unrestricted. If $c_{k+1} - c_k \ge 0$, the sequence $(c_j)_{j=0}^r$ reaches its peak at $k+1$; if $c_{k+1} - c_k \le 0$, the peak occurs at $k$. In either case, the sequence monotonically increases to its peak and monotonically decreases thereafter, establishing its unimodality. (For $k=0$, the initial range of nonnegative differences is empty.)
\end{proof}

\subsection{Proof of unimodality}

\begin{proof}[Proof of the unimodality assertion in \cref{thm:main-ehrhart}]
For the case $d=2$, we have
\[L_{2,N}(t) = 1 + \frac{N+2}{2}t + \frac{N}{2}t^2.
\]
The middle coefficient is evidently greater than or equal to both of its adjacent coefficients, satisfying the assertion.
Suppose henceforth that $d \ge 3$. Let 
\[G(t) = \prod_{s=1}^{d-2}(t+s) \qquad \text{and} \qquad \kappa = N(d-1)-(d+1).
\]
By our initial hypotheses, we have $\kappa \ge d-3 \ge 0$. Now, define the polynomials $U(t)$ and $V(t)$ as follows:
\begin{align*}
    U(t) &= d(d^2-1) + \bigl(N(d-1)^2+2d(d+1)\bigr)t + \bigl(N(d-1)+2(d+1)\bigr)t^2, \\
    V(t) &= d(d^2-1) + (N+2)(d^2-1)t + N(3d-1)t^2 + 2Nt^3.
\end{align*}
It is clear that all coefficients of $U(t)$ and $V(t)$ on their respective supports are positive. Our first objective is to establish the coefficient-order relations
\begin{equation}\label{eq:short-polynomial-order}
 U \preceq V \preceq tU.
\end{equation}
For the sake of this verification, let us temporarily denote the coefficients of $U$ by $u_0, u_1, u_2$ and those of $V$ by $u_0, v_1, v_2, v_3$. Because the coefficients are positive within the stated supports, the monotonicity of their successive ratios implies that \eqref{eq:short-polynomial-order} is equivalent to the system of inequalities:
\[v_1 \ge u_1, \qquad u_1v_2 \ge u_2v_1, \qquad u_1v_1 \ge u_0v_2, \qquad u_2v_2 \ge u_1v_3.
\]
To verify the first two inequalities, observe that the coefficient differences are given by
$v_1 - u_1 = 2\kappa$ and $v_2 - u_2 = 2(N+\kappa)$. Consequently, we obtain
\[u_1v_2 - u_2v_1 = 2Nu_1 + 2\kappa(u_1-u_2) > 0, \qquad \text{where} \quad u_1 - u_2 = (d-1)\bigl(N(d-2)+2(d+1)\bigr) > 0.
\]
For the remaining two inequalities, straightforward expansion yields
\begin{align*}
u_1v_1 - u_0v_2 &= (d^2-1)\bigl(N(N+1)(d-1)^2 + (N+4d)(d+1)\bigr) > 0, \\
u_2v_2 - u_1v_3 &= N(N+2)(d^2-1) > 0.
\end{align*}
This confirms the relation \eqref{eq:short-polynomial-order}. By \cref{lem:order-multiplication}, the polynomials
$P(t) = G(t)U(t)$ and $Q(t) = G(t)V(t)$ must therefore satisfy
\begin{equation}\label{eq:long-polynomial-order}
P \preceq Q \preceq tP.
\end{equation}
Note that $\deg P(t) = d$ and $\deg Q(t) = d+1$, and both polynomials possess positive coefficients on their supports.

Next, we express the differences of the Ehrhart coefficients in terms of $P(t)$ and $Q(t)$. Define the auxiliary polynomial 
$R(t) = t(\kappa+Nt)G(t)$.
Using the coefficient identities established for $U(t)$ and $V(t)$, along with the fact that $v_3 = 2N$, we obtain the relation
\begin{equation}\label{eq:PQR}
Q(t) = P(t) + 2(1+t)R(t).
\end{equation}
Expanding the binomial factors in \eqref{eq:ehrhart-formula} produces
\begin{align*}
 (d+1)! L_{d,N}(t) = (Nt+d+1)\prod_{s=1}^d(t+s) - (Nt-\kappa)(t+1)t\prod_{s=1}^{d-2}(t-s).
\end{align*}
To verify the factorization of the second term, we combine the two subtracted components from \eqref{eq:ehrhart-formula} to obtain
\[(d+1)(t+1)t\prod_{s=1}^{d-2}(t-s) + N(t-d+1)(t+1)t\prod_{s=1}^{d-2}(t-s),
\]
which indeed yields an extra factor of $Nt - N(d-1) + (d+1) = Nt - \kappa$. Furthermore, recalling the definition of $U(t)$, we find
\[U(t) + (1+t)t(\kappa+Nt) = (Nt+d+1)(t+d-1)(t+d).
\]
Additionally, reversing the sign of the argument in $R(t)$ gives
\[(-1)^d R(-t) = (Nt-\kappa)t\prod_{s=1}^{d-2}(t-s).
\]
Substituting these expressions back into the expanded formula yields the parity-decomposed form:
\begin{equation}\label{eq:parity-ehrhart}
    (d+1)! L_{d,N}(t) = P(t) + (1+t)\bigl(R(t) - (-1)^d R(-t)\bigr).
\end{equation}

Let $P(t) = \sum p_j t^j$, $Q(t) = \sum q_j t^j$, and $R(t) = \sum r_j t^j$, and define the scaled coefficients $c_j = (d+1)! \ell_j$. Since the coefficient of $t^j$ in $R(t) - (-1)^d R(-t)$ vanishes when $j \equiv d \pmod{2}$ and equals $2r_j$ otherwise, equation \eqref{eq:parity-ehrhart} yields the piecewise formula
\[
    c_j = \begin{cases}
    p_j + 2r_{j-1}, & j \equiv d \pmod{2}, \\
    p_j + 2r_j, & j \not\equiv d \pmod{2}.
    \end{cases}
\]
Taking the first backward difference, $c_j - c_{j-1}$, we immediately obtain $p_j - p_{j-1}$ for the parity $j \equiv d \pmod{2}$. Conversely, for the alternate parity, we have
\[
    c_j - c_{j-1} = p_j - p_{j-1} + 2(r_j - r_{j-2}).
\]
Taking differences of the coefficients in \eqref{eq:PQR} reveals that this latter expression is precisely $q_j - q_{j-1}$. Thus, we have established the alternating difference sequence:
\begin{equation}\label{eq:alternating-differences}
    c_j - c_{j-1} =
    \begin{cases}
    p_j - p_{j-1}, & j \equiv d \pmod{2}, \\
    q_j - q_{j-1}, & j \not\equiv d \pmod{2},
    \end{cases}
    \qquad 1 \le j \le d.
\end{equation}
Applying \cref{lem:difference-selection} to the relation \eqref{eq:long-polynomial-order} with the difference structure in \eqref{eq:alternating-differences}, we conclude that the sequence $(c_0, \ldots, c_d)$ is unimodal. Scaling by the positive constant $1/(d+1)!$ implies that the original Ehrhart coefficient sequence $(\ell_j)_{j=0}^d$ is also unimodal. Finally, since $P(t)$ possesses positive coefficients and $R(t)$ consists of nonnegative coefficients, the piecewise formulation for $c_j$ guarantees the positivity of every $c_j$, thereby completing the proof.
\end{proof}

The proof just given uses $N\ge2$ but not integrality of $N$. Consequently, the polynomial defined by \eqref{eq:ehrhart-formula} has
positive unimodal coefficients for every real $N\ge2$. Only integer parameters are interpreted as the lattice simplices $T_{d,N}$.

\subsection{The three leading coefficients}

\begin{lemma}
Suppose that $d \ge 3$. If we write $d! L_{d,N}(t) = \sum_{j=0}^d c_j t^j$, then the top three coefficients are given by
\begin{equation}\label{eq:leading-coefficients}
c_d = N(d-1), \quad c_{d-1}= \frac{d(d-1)}{2}(N+2), \quad  c_{d-2}= \frac{d(d-1)^2}{24} \bigl(N(d-1)(d-2)+24\bigr).
\end{equation}
\end{lemma}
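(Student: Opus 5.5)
The plan is to read the three top coefficients off the factored identity already obtained in the proof of unimodality,
\[
(d+1)!\,L_{d,N}(t)=(Nt+d+1)\prod_{s=1}^d(t+s)-(Nt-\kappa)(t+1)t\prod_{s=1}^{d-2}(t-s),\qquad \kappa=N(d-1)-(d+1),
\]
and then divide by $d+1$ to pass from $(d+1)!\,\ell_j$ to $c_j=d!\,\ell_j$. Both products are monic of degree $d$, so only their two highest non-leading coefficients are needed. Write $\prod_{s=1}^d(t+s)=t^d+e_1t^{d-1}+e_2t^{d-2}+\cdots$, where
\[
e_1=\tfrac{d(d+1)}{2},\qquad e_2=\sum_{1\le i<j\le d}ij=\tfrac{(d-1)d(d+1)(3d+2)}{24}.
\]
Similarly, write $(t+1)\prod_{s=1}^{d-2}(t-s)=t^{d-1}+f_1t^{d-2}+f_2t^{d-3}+\cdots$. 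Put $n=d-2$ and use the elementary symmetric functions of $\{1,\ldots,n\}$, namely $n(n+1)/2$ and $(n-1)n(n+1)(3n+2)/24$. This gives
\[
f_1=1-\tfrac{(d-2)(d-1)}{2},\qquad f_2=-\tfrac{(d-2)(d-1)}{2}+\tfrac{(d-3)(d-2)(d-1)(3d-4)}{24}.
\]
The second product is then $t^d+f_1t^{d-1}+f_2t^{d-2}+\cdots$.

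Next, multiply out and collect coefficients. The $t^{d+1}$ terms cancel ($N-N=0$), which confirms $\deg L_{d,N}=d$. The remaining three coefficients are
\begin{align*}
(d+1)c_d &= N e_1+(d+1)-Nf_1+\kappa,\\
(d+1)c_{d-1} &= N e_2+(d+1)e_1-Nf_2+\kappa f_1,\\
(d+1)c_{d-2} &= N e_3+(d+1)e_2-Nf_3+\kappa f_2 .
\end{align*}
The last line shows that a third coefficient is needed after all. Either $e_3$ and $f_3$ must be included, or we reindex correctly. Coefficient $c_{d-2}$ of $t^{d-2}$ in $(d+1)!L$ comes from $t^{d-3}$ in the products times $Nt$, and from $t^{d-2}$ times the constant term. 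So I will in fact compute $e_1,e_2,e_3$ and $f_1,f_2,f_3$ via the standard closed forms for the elementary symmetric functions of $\{1,\ldots,n\}$; the third one is $e_3(1..n)=\binom{n+1}{2}\binom{n+1}{4}$ in the usual Stirling-number form $\left[{n+1\atop n-2}\right]$. The first line simplifies at once: $Ne_1-Nf_1+d+1+\kappa=N\bigl(\tfrac{d(d+1)}2-1+\tfrac{(d-2)(d-1)}2+d-1\bigr)=N(d^2-1)$, so $c_d=N(d-1)$. As an independent check, $c_d$ must equal $\nvol(T_{d,N})=h^*_{T_{d,N}}(1)=1+N(d-2)+(N-1)=N(d-1)$ by \eqref{eq:hstar-two-row}.

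The main obstacle is purely algebraic: simplifying the $c_{d-2}$ expression into the stated factored form $\tfrac{d(d-1)^2}{24}\bigl(N(d-1)(d-2)+24\bigr)$, which involves degree-five polynomials in $d$. I would organize this by separating the coefficient of $N$ from the $N$-free part, remembering that $\kappa$ contributes to both. For each part, I would first verify divisibility by $(d+1)$ and then factor. To guard against slips, I would check the final formulas against $d=3$, where $L_{3,N}$ follows directly from \eqref{eq:ehrhart-formula}, and against $d=4$. A cross-check for $c_{d-1}$ is also available: it must equal half the normalized boundary volume. Alternatively, $c_{d-1}$ and $c_{d-2}$ can be obtained by expanding \eqref{eq:binomial-transform} with the $h^*$-vector \eqref{eq:hstar-two-row}, using $\sum_j h_j^*\binom{t+d-j}{d}$ and the top terms of each binomial. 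I would use whichever route gives the cleaner cancellation.
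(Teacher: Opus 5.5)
Your proposal is correct and is essentially the paper's argument: both read the top coefficients off differences of products of linear factors via symmetric functions, and the only difference is bookkeeping (you use the merged $\kappa$-identity and closed forms for $e_j(1,\ldots,n)$, whereas the paper separates the $N$-free and $N$-linear parts of \eqref{eq:ehrhart-formula} and applies Newton's identities to power sums over $X$, $Y$, $Z$); the algebra you defer does close, since the $N$-free part of $(d+1)c_{d-2}$ is $(d+1)(e_2-f_2)=(d+1)d(d-1)^2$ and its $N$-part is $e_3-f_3+(d-1)f_2=\frac{(d+1)d(d-1)^3(d-2)}{24}$. One correction to your optional cross-check: half the relative boundary volume equals $\ell_{d-1}$, not $c_{d-1}=d!\,\ell_{d-1}$, so in normalized facet volumes the correct check is $c_{d-1}=\frac d2\sum_F\nvol(F)$, i.e.\ $\sum_F\nvol(F)=(d-1)(N+2)$.
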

\begin{proof}
For a multiset $S=(x_1,\ldots,x_r)$, set
\[e_j(S)=\sum_{1\le i_1<\cdots<i_j\le r}x_{i_1}\cdots x_{i_j},
 \qquad s_j(S)=\sum_{i=1}^r x_i^j,
\]
with $e_0(S)=1$ and $e_j(S)=0$ for $j>r$.
Write $[t^j]F(t)$ for the coefficient of $t^j$ in a polynomial $F$.
We use the first three Newton identities
\[
 e_1=s_1,\qquad e_2=\frac{s_1^2-s_2}{2},\qquad
 e_3=\frac{s_1^3-3s_1s_2+2s_3}{6};
\]
see \cite[Chapter~7]{RP.Stanley99}.
Define the sets
\[X = \{0,1,\ldots,d\}, \quad Y = \{1,0,-1,\ldots,-(d-1)\}, \quad Z = \{1,0,-1,\ldots,-(d-2)\}.
\]
The necessary power sums evaluated on these sets are summarized in the following table. The entries follow from the standard closed-form formulas for the sums of consecutive integers, squares, and cubes:
\[
\renewcommand{\arraystretch}{2.2}
\begin{array}{c|ccc}
  & s_1 & s_2 & s_3 \\ \hline
X & \frac{d(d+1)}{2} & \frac{d(d+1)(2d+1)}{6} & \frac{d^2(d+1)^2}{4} \\
Y & 1-\frac{d(d-1)}{2} & 1+\frac{d(d-1)(2d-1)}{6} & 1-\frac{d^2(d-1)^2}{4} \\
Z & 1-\frac{(d-1)(d-2)}{2} & 1+\frac{(d-1)(d-2)(2d-3)}{6} & \text{not needed}
\end{array}
\]
Let $f(t)$ and $g(t)$ denote the two difference expressions appearing in \eqref{eq:ehrhart-formula}, i.e., $L_{d,N}(t) = f(t) + N g(t)$. 

The scaled polynomial $d! f(t)$ is formed by the difference between the product $\prod_{x=1}^d (t+x)$ and the corresponding product $\prod_{z \in Z} (t+z)$. Consequently, its leading term of degree $d$ vanishes. Since the inclusion of zero in $X$ does not alter the elementary symmetric polynomials of positive degree, coefficient extraction yields
\begin{align*}
    [t^{d-1}]\,d! f(t) &= s_1(X) - s_1(Z) = d(d-1), \\
    [t^{d-2}]\,d! f(t) &= \frac{s_1(X)^2 - s_1(Z)^2 - s_2(X) + s_2(Z)}{2} = d(d-1)^2.
\end{align*}

Similarly, the polynomial $(d+1)! g(t)$ is given by the difference between the product $\prod_{x \in X} (t+x)$ and the product $\prod_{y \in Y} (t+y)$. Their terms of degree $d+1$ cancel upon subtraction. Substituting the tabulated power sums into the three symmetric-sum identities yields
\begin{align*}
e_1(X) - e_1(Y) &= (d+1)(d-1), \qquad e_2(X) - e_2(Y) = \frac{d(d+1)(d-1)}{2}, \\
e_3(X) - e_3(Y) &= \frac{d(d+1)(d-1)^3(d-2)}{24}.
\end{align*}
For completeness, the second equality can alternatively be deduced from the relations $s_1(X) + s_1(Y) = d+1$ and $s_2(X) - s_2(Y) = d^2-1$, which jointly imply $e_2(X) - e_2(Y) = \frac{1}{2}\bigl((d^2-1)(d+1) - (d^2-1)\bigr)$. For the third equality, direct substitution yields:
\begin{align*}
6\bigl(e_3(X) - e_3(Y)\bigr)&= \left(\frac{d(d+1)}{2}\right)^3 - \left(1-\frac{d(d-1)}{2}\right)^3 - \frac{d^2(d+1)^2(2d+1)}{4} \\
&+ 3\left(1-\frac{d(d-1)}{2}\right)\left(1+\frac{d(d-1)(2d-1)}{6}\right) + \frac{d^2(d+1)^2+d^2(d-1)^2}{2} - 2 \\
&= \frac{d(d+1)(d-1)^3(d-2)}{4}.
\end{align*}

Dividing these three coefficient differences by $d+1$ provides the leading coefficients of $d! g(t)$. Finally, scaling by $N$ and adding the corresponding coefficients of $d! f(t)$ establishes the explicit formulas in \eqref{eq:leading-coefficients}, completing the proof.
\end{proof}

\subsection{Classification of log-concavity}

\begin{proof}[Proof of \cref{thm:main-ehrhart}\textup{(i)}]
Since scaling a polynomial by a positive constant preserves the log-concavity of its coefficient sequence, we may work directly with the scaled coefficients $c_j$. By \eqref{eq:leading-coefficients}, for any dimension $d \ge 3$, the discriminant of the top three coefficients evaluates to
\begin{align*}
c_{d-1}^2 - c_{d-2}c_d &= \frac{d(d-1)^2}{24} \left( 6d(N+2)^2 - N(d-1)\bigl(N(d-1)(d-2)+24\bigr) \right) \\
    &= -\frac{d(d-1)^2}{24} \left( (d^3-4d^2-d-2)N^2 - 24N - 24d \right).
\end{align*}

Suppose that $d \ge 6$. Because $N \ge 2$, we can bound the linear terms by $24N \le 12N^2$ and $24d \le 6dN^2$. Applying these bounds, the expression within the brackets in the last line is bounded below by $N^2(d^3-4d^2-7d-14)$.
For $d \ge 6$, we further have
\[ d^3-4d^2-7d-14 \ge 2d^2-7d-14 \ge 16.
\]
Indeed, $2d^2-7d-14$ equals $16$ at $d=6$ and has positive forward difference $4d-5$ for $d\ge6$. Consequently, the bracketed expression is positive, yielding $c_{d-1}^2 < c_{d-2}c_d$ for every dimension $d \ge 6$. Thus, the sequence is not log-concave for $d \ge 6$.

For the low-dimensional cases $d \in \{2, 3, 4, 5\}$, we verify the log-concavity assertions by direct computation. Equation \eqref{eq:ehrhart-formula} yields the following scaled Ehrhart polynomials:
\begin{align*}
2L_{2,N}(t) &= 2 + (N+2)t + Nt^2, \\
6L_{3,N}(t) &= 6 + (N+12)t + 3(N+2)t^2 + 2Nt^3, \\
8L_{4,N}(t) &= 8 + 2(N+8)t + 3(N+4)t^2 + 2(N+2)t^3 + Nt^4.
\end{align*}
For $d=2$, the sole internal log-concavity discriminant is $(N+2)^2 - 2N = N^2+2N+4$. For $d=3$, the two discriminants are
\begin{align*}
(N+12)^2 - 18(N+2) &= N^2+6N+108, \\
9(N+2)^2 - 2N(N+12) &= 7N^2+12N+36.
\end{align*}
For $d=4$, the corresponding differences are
\begin{align*}
4(N+8)^2 - 24(N+4) &= 4(N^2+10N+40), \\
9(N+4)^2 - 4(N+8)(N+2) &= 5N^2+32N+80, \\
4(N+2)^2 - 3N(N+4) &= N^2+4N+16.
\end{align*}
It is manifest that all of these quadratic expressions in $N$ are positive for $N \ge 2$.

Finally, for dimension $d=5$, the Ehrhart polynomial takes the form
\begin{align*}
L_{5,N}(t) &= 1 + \left(\frac{2N}{15}+\frac{7}{3}\right)t + \left(\frac{5N}{12}+\frac{11}{6}\right)t^2 + \frac{N+2}{3}t^3 + \frac{N+2}{12}t^4 + \frac{N}{30}t^5.
\end{align*}
By evaluating the four internal discriminants $\ell_j^2 - \ell_{j-1}\ell_{j+1}$, we obtain 
\begin{align*}
 \ell_1^2 - \ell_0\ell_2 &= \frac{16N^2+185N+3250}{900},\quad  \ell_2^2 - \ell_1\ell_3 = \frac{93N^2+476N+1300}{720}, \\
    \ell_3^2 - \ell_2\ell_4 &= \frac{(N+2)(11N+10)}{144}, \qquad \ell_4^2 - \ell_3\ell_5 = \frac{(N+2)(10-3N)}{720}.
\end{align*}
The first three discriminants are clearly positive for all $N \ge 2$. However, the fourth discriminant is non-negative if and only if $N \le \frac{10}{3}$. Restricting our attention to integers $N \ge 2$, this condition is satisfied precisely when $N \in \{2, 3\}$, in which case the difference is positive. This completes the classification of the log-concave cases and establishes all claimed strict inequalities.
\end{proof}

\subsection{Classification of real-rootedness}

We use the following instance of Newton's inequalities; see
\cite[Theorem~5.12]{RP.StanleyAC}.

\begin{lemma}\label{lem:real-root-obstruction}
If a real polynomial $C(t)=c_dt^d+c_{d-1}t^{d-1}+c_{d-2}t^{d-2}+\cdots$ of degree $d\ge2$ has only real roots, then we have
\[c_{d-1}^2\ge\frac{2d}{d-1}c_{d-2}c_d.
\]
\end{lemma}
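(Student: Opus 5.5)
We reduce the claim to the classical Newton inequalities for elementary symmetric functions of real numbers. Since the degree is exactly $d$, we have $c_d\ne0$. Dividing by $c_d$ does not change the inequality: both sides scale by $c_d^2$, because $c_{d-2}c_d/c_d^2=c_{d-2}/c_d$. We may therefore assume $C$ is monic with real roots $x_1,\ldots,x_d$. By Vieta,
\[c_{d-1}=-e_1(x),\qquad c_{d-2}=e_2(x),\]
where $e_j$ denotes the elementary symmetric polynomial in the roots, as in the proof of the leading-coefficient lemma. The claim then becomes $e_1^2\ge \frac{2d}{d-1}e_2$.

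To prove this, we use the power sums $s_1,s_2$ from the same Newton identities: $e_2=(s_1^2-s_2)/2$. Substituting, the inequality reads
\[e_1^2\ge \frac{d}{d-1}\bigl(s_1^2-s_2\bigr),\quad\text{i.e.}\quad \frac{d}{d-1}s_2\ge \frac{1}{d-1}s_1^2,\]
which is equivalent to $d\,s_2\ge s_1^2$. This last inequality is Cauchy--Schwarz, $\bigl(\sum x_i\bigr)^2\le d\sum x_i^2$. Here we use essentially that the $x_i$ are real, since only then is $s_2$ a sum of squares.

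The main point needing care is the bookkeeping in the normalization when $c_d<0$. Because the inequality is homogeneous of degree two in the coefficients, the sign of $c_d$ is irrelevant. Alternatively, we can simply cite \cite[Theorem~5.12]{RP.StanleyAC} with $i=d-1$, after reversing the polynomial if the citation is stated for the lower coefficients. Nothing else is delicate: the argument is two lines once the symmetric-function translation is made.
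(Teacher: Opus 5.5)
Your proof is correct. The paper gives no argument of its own here: it quotes the lemma as the index-$(d-1)$ case of Newton's inequalities $c_i^2\ge\frac{(i+1)(d-i+1)}{i(d-i)}c_{i-1}c_{i+1}$, citing Stanley.

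You instead prove exactly this extreme instance from scratch. After normalizing to a monic polynomial (your scaling check by $c_d^2>0$ is right), Vieta and $e_2=(s_1^2-s_2)/2$ turn the claim into $d\,s_2\ge s_1^2$. That is Cauchy--Schwarz, or equivalently the identity $d\,s_2-s_1^2=\sum_{i<j}(x_i-x_j)^2\ge0$.

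What your route buys is self-containment and transparency. The obstruction used in \cref{thm:main-ehrhart}\textup{(ii)} is just nonnegativity of the spread of the roots, and one also sees that equality forces all roots to coincide. What the citation buys is the whole family of inequalities at every index. The interior cases do not follow from your computation alone; they are usually obtained by differentiating and reversing down to a quadratic discriminant via Rolle's theorem.

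Since the paper only ever uses the top index (for $d\ge4$), your elementary argument covers everything needed. Your fallback remark about reversing the polynomial before citing Stanley is unnecessary, since Newton's inequalities hold at every index $1\le i\le d-1$.
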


We now complete the proof of \cref{thm:main-ehrhart}\textup{(ii)}.

\begin{proof}[Proof of \cref{thm:main-ehrhart}\textup{(ii)}]
Suppose that $d \ge 4$. Substituting the explicit expressions from \eqref{eq:leading-coefficients} yields
\begin{align*}
c_{d-1}^2 - \frac{2d}{d-1}c_{d-2}c_d&= \frac{d^2(d-1)^2}{12} \left( 3(N+2)^2 - N\bigl(N(d-1)(d-2)+24\bigr) \right) \\
&= -\frac{d^2(d-1)^2}{12} \left( (d^2-3d-1)N^2 + 12N - 12 \right) < 0.
\end{align*}
To verify the negativity, observe that $d \ge 4$ implies $d^2-3d-1 \ge 3$. Combined with the hypothesis $N \ge 2$, the quadratic expression in $N$ within the brackets is positive. Consequently, \cref{lem:real-root-obstruction} provides a direct obstruction, ensuring that the Ehrhart polynomial cannot be real-rooted for any $d \ge 4$.

For the low-dimensional cases, we analyze the roots directly. In dimension $d=2$, the polynomial factors as
\[L_{2,N}(t) = \frac{1}{2}(t+1)(Nt+2),
\]
which trivially possesses real roots for all $N$. In dimension $d=3$, we obtain the factorization
\[L_{3,N}(t) = \frac{1}{6}(t+1)\bigl(2Nt^2+(N+6)t+6\bigr).
\]
The discriminant of the quadratic factor is given by
\[(N+6)^2 - 48N = N^2 - 36N + 36 = (N-18)^2 - 288.
\]
This discriminant is non-negative if and only if $N \le 18-12\sqrt{2}$ or $N \ge 18+12\sqrt{2}$. Since $18-12\sqrt{2} \approx 1.029$, the first interval contains no admissible integer $N \ge 2$. Furthermore, bounding the square root yields $34 < 18+12\sqrt{2} < 35$. Restricting our attention to integral $N \ge 2$, we conclude that the quadratic factor (and thus $L_{3,N}(t)$) is real-rooted if and only if $N \ge 35$.
As an illustrative check, substituting $N=35$ reveals the rational factorization
\[L_{3,35}(t) = \frac{1}{6}(t+1)(7t+2)(10t+3).
\]
This establishes the conditions for real-rootedness and completes the proof of \cref{thm:main-ehrhart}.
\end{proof}

\section{A residue criterion for one-row simplices}\label{sec:residue}

We specialize the box description of \cref{sec:cones-and-heights}
to $S_{\avec}$ and prove \cref{thm:main-general}. The coordinate
lattice and height enumeration are standard; compare
\cite[Section~2.2]{BBCHV}. The point of the following calculation
is to express the Hilbert-basis criterion
\eqref{eq:idp-hilbert-criterion} as a test on the single residue
parameter $t$.
For comparison, Braun, Davis, and Solus
\cite[Theorem~2.3]{BDS} give a floor-function test for
\[
 \conv\left(e_1,\ldots,e_n,-\sum_{i=1}^n q_i e_i\right),
 \qquad q_i\in\Z_{>0},\quad q_j\mid1+\sum_{i=1}^n q_i
 \quad(1\le j\le n).
\]
Their proof subtracts a height-one box point from each higher
box point. We use this same principle directly in the lattice
\eqref{eq:general-coordinate-lattice}, without the displayed
divisibility hypotheses.

\subsection{The coordinate lattice}

Write $v_0=0$, $v_i=e_i$ for $1\le i\le d-1$, and
$v_d=\avec$. Let $V=V_{S_{\avec}}$ be the homogenized vertex
matrix from \cref{sec:cones-and-heights}. Then $|\det V|=N$, so
$\nvol(S_{\avec})=N$. Set $\Lambda_{\avec}=V^{-1}\Z^{d+1}$.
Solving $V\lambda=(h,x_1,\ldots,x_d)^T$ gives
\begin{equation}\label{eq:inverse-vertex-map}
\begin{aligned}
 \lambda_d&=\frac{x_d}{N},&
 \lambda_i&=x_i-\frac{a_ix_d}{N}\quad(1\le i\le d-1),\\
 \lambda_0&=h-\sum_{i=1}^{d-1}x_i+\frac{(A-1)x_d}{N},&
 A&=\sum_{i=1}^{d-1}a_i.
\end{aligned}
\end{equation}
Consequently,
\begin{equation}\label{eq:general-coordinate-lattice}
 \Lambda_{\avec}=\Z^{d+1}
 +\Z\frac1N(A-1,-a_1,\ldots,-a_{d-1},1).
\end{equation}
Its quotient by $\Z^{d+1}$ is cyclic of order $N$, and its
covolume is $1/N$. Using the residue notation from
\cref{sec:lattice-conventions}, the box representatives are
\begin{equation}\label{eq:box-representatives}
 \lambda(t)=\frac1N\bigl([(A-1)t]_N,[-a_1t]_N,\ldots,
 [-a_{d-1}t]_N,t\bigr),\qquad 0\le t<N.
\end{equation}
The last coordinate distinguishes these $N$ representatives.

\begin{lemma}\label{lem:height-and-mapping}
For $0\le t<N$, the representative $\lambda(t)$ has height
$H_{\avec}(t)$ as defined in \eqref{eq:height-intro}, and
\[
 V\lambda(t)=(H_{\avec}(t),p_t),\qquad
 p_t=\left(\left\lceil\frac{a_1t}{N}\right\rceil,\ldots,
 \left\lceil\frac{a_{d-1}t}{N}\right\rceil,t\right).
\]
For $1\le t<N$, one has $1\le H_{\avec}(t)\le d$.
\end{lemma}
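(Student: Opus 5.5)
The plan is a direct computation in the coordinates \eqref{eq:box-representatives}, using the residue identity
\[
[u]_N=u-N\floor{u/N}.
\]

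First I compute the height. Applying this identity with $u=(A-1)t$ gives
\[
\frac{[(A-1)t]_N}{N}=\frac{(A-1)t}{N}-\floor{\frac{(A-1)t}{N}}.
\]
Applying it with $u=-a_it$ and using $\floor{-x}=-\ceil{x}$ gives
\[
\frac{[-a_it]_N}{N}=-\frac{a_it}{N}+\ceil{\frac{a_it}{N}}.
\]
Summing all coordinates of $\lambda(t)$, including the last coordinate $t/N$, the fractional parts contribute
\[
\frac{(A-1)t}{N}-\sum_{i=1}^{d-1}\frac{a_it}{N}+\frac tN=\frac{(A-1-A+1)t}{N}=0.
\]
The height is therefore $\sum_i\ceil{a_it/N}-\floor{(A-1)t/N}=H_{\avec}(t)$.

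Next I compute the image under $V$. By construction $V\lambda$ has first coordinate $\htop(\lambda)=\sum_j\lambda_j$, which is $H_{\avec}(t)$ by the previous step. Its remaining coordinates are $\sum_{i=1}^{d-1}\lambda_ie_i+\lambda_d\avec$. The last coordinate is $N\cdot t/N=t$. For $1\le i\le d-1$, coordinate $i$ is
\[
\lambda_i+a_i\lambda_d=\ceil{\frac{a_it}{N}}-\frac{a_it}{N}+\frac{a_it}{N}=\ceil{\frac{a_it}{N}}.
\]
This matches $p_t$. Equivalently, one can check that $x=p_t$ and $h=H_{\avec}(t)$ solve \eqref{eq:inverse-vertex-map}.

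Finally I prove the bounds. Since $\lambda(t)\in[0,1)^{d+1}$ has $d+1$ coordinates, each strictly less than $1$, the height is $<d+1$. The height is an integer, so it is at most $d$. For $1\le t<N$ the last coordinate $t/N$ is positive, so the height is positive, and integrality gives $H_{\avec}(t)\ge1$. Integrality is clear from the formula, or from the fact that $V\lambda(t)\in\Z^{d+1}$.

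No step is a real obstacle. The only care needed is the sign convention $[-a_it]_N/N=\ceil{a_it/N}-a_it/N$, including the case $N\mid a_it$, where both sides vanish.
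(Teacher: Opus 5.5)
Your proof is correct and follows the paper's argument. Both use the identities $[-a_it]_N=N\ceil{a_it/N}-a_it$ and $[(A-1)t]_N=(A-1)t-N\floor{(A-1)t/N}$ to compute the coordinate sum and the spatial coordinates of $V\lambda(t)$, and both get $1\le H_{\avec}(t)\le d$ from $\lambda(t)$ being a nonzero vector of integer height with all coordinates in $[0,1)$.
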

\begin{proof}
The identities
\[
 [-a_it]_N=N\left\lceil\frac{a_it}{N}\right\rceil-a_it,
 \qquad [(A-1)t]_N=(A-1)t-N\left\lfloor\frac{(A-1)t}{N}\right\rfloor
\]
show that the coordinate sum of \eqref{eq:box-representatives}
is $H_{\avec}(t)$. Its image under $V$ has $i$-th spatial
coordinate $([-a_it]_N+a_it)/N$ and last coordinate $t$,
giving the stated formula. For $t>0$, the height is a positive
integer less than $d+1$, since $\lambda(t)$ is nonzero and all
its coordinates belong to $[0,1)$.
\end{proof}

\begin{lemma}\label{lem:all-lattice-points}
Let
\[
 J_{\avec}=\{t\in\Z:1\le t<N,\ H_{\avec}(t)=1\}
\]
be the set of \emph{junior indices}. Then
\begin{equation}\label{eq:all-lattice-points}
 S_{\avec}\cap\Z^d=\{v_0,\ldots,v_d\}
 \sqcup\{p_t:t\in J_{\avec}\}.
\end{equation}
Every $\lambda\in\Lambda_{\avec}\cap\R_{\ge0}^{d+1}$ has a
unique expression
\begin{equation}\label{eq:integer-box-decomposition}
 \lambda=n+\lambda(t),\qquad
 n\in\Z_{\ge0}^{d+1},\quad 0\le t<N.
\end{equation}
\end{lemma}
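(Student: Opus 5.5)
The second assertion, \eqref{eq:integer-box-decomposition}, comes first because the first one depends on it. It is a direct specialization of \eqref{eq:box-decomposition-general}. Take $\lambda\in\Lambda_{\avec}\cap\R_{\ge0}^{d+1}$ and write $\lambda=n+\beta$ with $n_i=\floor{\lambda_i}$ and $\beta_i=\{\lambda_i\}$. Then $\beta\in\operatorname{Box}(S_{\avec})$. By \eqref{eq:general-coordinate-lattice}, every element of $\Lambda_{\avec}$ is congruent modulo $\Z^{d+1}$ to $\frac tN(A-1,-a_1,\ldots,-a_{d-1},1)$ for some $t\in\Z$. Reducing each coordinate into $[0,1)$ shows that the box elements are exactly the $\lambda(t)$ of \eqref{eq:box-representatives} with $0\le t<N$. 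Moreover, $t$ is determined by the last coordinate $t/N$. Uniqueness follows because $n$ is forced to equal $\floor{\lambda}$ coordinatewise, and $t$ is forced by $\beta_d$.

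For \eqref{eq:all-lattice-points}, I will use the identification, via $V$, of $S_{\avec}\cap\Z^d$ with the height-one points $\Delta\cap\Lambda_{\avec}$. Let $\lambda$ be such a point and decompose it as $\lambda=n+\lambda(t)$. Then
\[
 1=\htop(\lambda)=\sum_i n_i+H_{\avec}(t),
\]
where the $n_i$ are nonnegative integers and $H_{\avec}(t)\ge0$.

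If $t=0$, then $\lambda(0)=0$, so $n$ has coordinate sum one. Thus $\lambda=E_i$ and $V\lambda=(1,v_i)$, which gives a vertex.

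If $t\ge1$, then \cref{lem:height-and-mapping} gives $H_{\avec}(t)\ge1$. This forces $n=0$ and $H_{\avec}(t)=1$, so $t\in J_{\avec}$ and $V\lambda=(1,p_t)$. Conversely, each $E_i$ and each $\lambda(t)$ with $t\in J_{\avec}$ lies in $\Lambda_{\avec}\cap\Delta$. So the vertices together with the points $p_t$, $t\in J_{\avec}$, are exactly the lattice points of $S_{\avec}$.

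It remains to show the union is disjoint and the listed points are distinct. Since $V$ is a bijection, this amounts to showing the corresponding $\lambda$'s are distinct. The $\lambda(t)$ for different $t$ differ in their last coordinate. Also, no $\lambda(t)$ with $t\ge1$ equals some $E_i$, because all coordinates of $\lambda(t)$ lie in $[0,1)$. In the original coordinates, the last coordinate of $p_t$ is $t\in\{1,\ldots,N-1\}$, which already separates $p_t$ from $v_0,\ldots,v_{d-1}$ (last coordinate $0$) and from $v_d$ (last coordinate $N$).

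I do not expect a real obstacle. The only care needed is to check that the reduced representatives are exactly the $\lambda(t)$ of \eqref{eq:box-representatives}, that is, that $[\cdot]_N/N$ is the fractional part of $\cdot/N$. This holds by the definition of $[u]_N$.
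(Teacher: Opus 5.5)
Your proposal is correct and follows essentially the same route as the paper. You specialize the box decomposition to the representatives $\lambda(t)$, split the height-one case into $t=0$ (giving some $E_i$) and $t\ge1$ (forcing $n=0$ and $H_{\avec}(t)=1$), apply $V$, and separate the points by their last coordinate; you are simply more explicit than the paper in checking that the reduced representatives are exactly the $\lambda(t)$ and in justifying uniqueness.
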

\begin{proof}
The last assertion is \eqref{eq:box-decomposition-general} in the
coordinates \eqref{eq:box-representatives}. At height one, either
$t=0$ and $n=E_i$ for some $i$, or $n=0$ and
$H_{\avec}(t)=1$. Applying $V$ gives
\eqref{eq:all-lattice-points}. The union is disjoint because
$p_t$ has last spatial coordinate $t\in\{1,\ldots,N-1\}$,
whereas every vertex has last coordinate $0$ or $N$.
Distinct indices also give distinct points.
\end{proof}

In particular, the standard box formula
\eqref{eq:hstar-box-general} becomes
\[
 h_{S_{\avec}}^*(z)=\sum_{t=0}^{N-1}z^{H_{\avec}(t)};
\]
see again \cite[Section~2.2]{BBCHV}.

\subsection{The height-decreasing criterion}

\begin{proof}[Proof of \cref{thm:main-general}]
By \eqref{eq:integer-box-decomposition}, $S_{\avec}$ is IDP
if and only if every $\lambda(t)$ is a sum of height-one
nonnegative lattice points.

Suppose first that $S_{\avec}$ is IDP and
$H_{\avec}(t)=h\ge2$. No summand in a height-one decomposition
of $\lambda(t)$ can be an $E_i$, since every coordinate of
$\lambda(t)$ is less than one. Thus
\[
 \lambda(t)=\lambda(s_1)+\cdots+\lambda(s_h),
 \qquad s_i\in J_{\avec}.
\]
The last coordinate gives $t=s_1+\cdots+s_h$, so
$1\le s_1<t$. The difference $\lambda(t)-\lambda(s_1)$
belongs to $[0,1)^{d+1}$ and represents residue $t-s_1$.
By uniqueness of box representatives, it equals
$\lambda(t-s_1)$. Taking heights gives
$H_{\avec}(t-s_1)=H_{\avec}(t)-1$.

Conversely, suppose the condition in \cref{thm:main-general}
holds. If $1\le s<t<N$, reduction of each coordinate modulo
one gives
\[
 \lambda(s)+\lambda(t-s)-\lambda(t)\in\{0,1\}^{d+1}.
\]
If $H_{\avec}(s)=1$ and
$H_{\avec}(t-s)=H_{\avec}(t)-1$, this nonnegative vector has
coordinate sum zero. Hence
\begin{equation}\label{eq:no-carry-decomposition}
 \lambda(t)=\lambda(s)+\lambda(t-s).
\end{equation}
Induction on the height now decomposes every representative
into height-one points. Together with
\eqref{eq:integer-box-decomposition}, this proves IDP.

If the condition fails at $t$, then
$p_t\in H_{\avec}(t)S_{\avec}\cap\Z^d$ by
\cref{lem:height-and-mapping}. A decomposition into
$H_{\avec}(t)$ points of $S_{\avec}\cap\Z^d$ would lift
to a decomposition of $\lambda(t)$ and, by the necessity
argument, give an admissible index $s$. Thus $p_t$ is the
claimed certificate of failure.

Computing the height array takes $O(dN)$ arithmetic operations.
For each $t$ with $H_{\avec}(t)\ge2$, search the indices
$s\in J_{\avec}$ with $s<t$ for
$H_{\avec}(t-s)=H_{\avec}(t)-1$. This takes at most
$N|J_{\avec}|\le N(N-1)$ comparisons and subtractions.
The heights, junior indices, and one chosen predecessor for
each successful $t$ require $O(N)$ integer storage slots;
the input requires $O(d)$. These are bounds on arithmetic
operations and stored integers, rather than bit-complexity
bounds.
\end{proof}

\begin{corollary}\label{cor:necessary-idp}
If $S_{\avec}$ is IDP, then
\begin{equation}\label{eq:first-height-condition}
 1+\sum_{i=1}^{d-1}[-a_i]_N\le N.
\end{equation}
If every $a_i>0$, this is equivalent to
$\sum_{i=1}^{d-1}a_i\ge(d-2)N+1$.
\end{corollary}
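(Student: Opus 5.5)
The plan is to apply the criterion of \cref{thm:main-general} at the single index $t=N-1$, or equivalently at the residue $-1$. By \eqref{eq:box-representatives}, the representative $\lambda(N-1)$ has coordinates
\[
 \lambda(N-1)=\frac1N\bigl([-(A-1)]_N,[a_1]_N,\ldots,[a_{d-1}]_N,N-1\bigr).
\]
It is more convenient to work with $t=1$ and residue $1$, and then pass to complements. The junior points $\lambda(s)$ with $s\in J_{\avec}$ all have height one.

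First consider the representative $\lambda(1)$, whose spatial image is $p_1=(\lceil a_1/N\rceil,\ldots,1)$. Its height is
\[
 H_{\avec}(1)=\frac{[A-1]_N+\sum_i[-a_i]_N+1}{N}.
\]
The claim is that IDP forces $H_{\avec}(1)=1$. Indeed, if $H_{\avec}(1)\ge2$, then the criterion of \cref{thm:main-general} at $t=1$ requires some $s$ with $1\le s<1$, which is impossible. So IDP gives $H_{\avec}(1)\le1$, and since $H_{\avec}(1)\ge1$ by \cref{lem:height-and-mapping}, we get $H_{\avec}(1)=1$. This is the key step, and it is essentially immediate from the theorem.

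Next, translate $H_{\avec}(1)=1$ into \eqref{eq:first-height-condition}. Directly, $H_{\avec}(1)=\sum_i\lceil a_i/N\rceil-\lfloor (A-1)/N\rfloor$. More simply, $H_{\avec}(1)=1$ means
\[
 [A-1]_N+\sum_i[-a_i]_N+1=N.
\]
Since $[A-1]_N\ge0$, this gives $1+\sum_i[-a_i]_N\le N$.

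For the equivalence when every $a_i>0$, note that $0<a_i<N$ gives $[-a_i]_N=N-a_i$. The inequality then reads $1+(d-1)N-A\le N$, that is, $A\ge(d-2)N+1$.

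The only point needing care is the sign convention in the residues, namely checking that $[-a_i]_N=N\lceil a_i/N\rceil-a_i$ equals $N-a_i$ exactly when $a_i\ne0$. This is already recorded in the proof of \cref{lem:height-and-mapping}, so I expect no real obstacle. Should the height bookkeeping be off, an alternative is to observe directly that height at least $2$ at $t=1$ cannot be decomposed: every junior summand has last coordinate at least $1$, but the last coordinates must sum to $1$.
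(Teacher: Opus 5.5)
Your proof is correct and follows the paper's argument: IDP forces $H_{\avec}(1)=1$ because no $s$ with $1\le s<1$ exists, and the height formula for $\lambda(1)$ then yields the inequality. The paper writes this as $H_{\avec}(1)=\lceil D/N\rceil$ with $D=1+\sum_i[-a_i]_N$, while you simply drop the nonnegative term $[A-1]_N$, which is enough for the stated direction; the opening detour through $t=N-1$ and ``complements'' is never used and can be deleted.
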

\begin{proof}
Since there is no integer $s$ with $1\le s<1$,
\cref{thm:main-general} forces $H_{\avec}(1)=1$. Set $D=1+\sum_{i=1}^{d-1}[-a_i]_N$. The first coordinate numerator
of $\lambda(1)$ is $[-D]_N$, and hence
\[
 H_{\avec}(1)=\frac{D+[-D]_N}{N}=\left\lceil\frac DN\right\rceil.
\]
Since $D\ge1$, this equals one precisely when $D\le N$.
For positive $a_i<N$, use $[-a_i]_N=N-a_i$.
\end{proof}

The argument at $t=1$ is the one-row counterpart of the
necessary condition in \cite[Corollary~2.4]{BDS}.

\begin{example}
For $\avec=(4,3,5)$, the heights at $t=1,2,3,4$ are
$1,2,2,3$, so $J_{\avec}=\{1\}$. At $t=3$, the only
possible subtraction gives $H_{\avec}(2)=2$, whereas
$H_{\avec}(3)-1=1$. Thus $(3,2,3)\in2S_{(4,3,5)}$ is not
a sum of two lattice points of $S_{(4,3,5)}$.
\end{example}

The test also yields decompositions. In a positive instance,
record an admissible $s(t)$ for each $t$ of height at least two,
and set $s(t)=t$ at height one. Repeatedly applying
\eqref{eq:no-carry-decomposition} recovers a decomposition of
$p_t$ into $H_{\avec}(t)$ lattice points of $S_{\avec}$.
For an arbitrary $x\in hS_{\avec}\cap\Z^d$, compute
$\lambda=V^{-1}(h,x)$ using \eqref{eq:inverse-vertex-map},
set $n_i=\lfloor\lambda_i\rfloor$, and let $t=[x_d]_N$.
Then
\[
 (h,x)=\sum_{i=0}^d n_i(1,v_i)+V\lambda(t).
\]
Take $n_i$ copies of each $v_i$ and append the decomposition
of $p_t$; when $t=0$, the latter is empty. Their number is
$\sum_{i=0}^d n_i+H_{\avec}(t)=h$. The stated storage bound
concerns the input and preprocessing tables; an explicit list
of $h$ output points has its own output cost.

\section{Constant-coordinate one-row simplices}\label{sec:constant}

Throughout this section, set $m=d-1$. We give a direct proof of
\cref{thm:main-constant} in the original coordinate lattice.
The reduction to a two-dimensional Hilbert basis and the passage
from a planar triangulation to a regular unimodular triangulation
are instances of the two-parameter construction of
\cite[Section~6(a)]{DHH}; see also
\cite[Sections~1.3 and~2.1--2.3]{DLR}. We keep the explicit
calculations to track the lattice index when $\gcd(N,B)>1$
and to include the boundary case $B=0$.

For $\avec=(N-q,\ldots,N-q,N)$, the necessary condition \eqref{eq:first-height-condition} simplifies to $mq+1 \le N$.
Thus, the condition $B = N-mq-1 < 0$ precludes the integer decomposition property (IDP). Alternatively, this obstruction can be observed directly. The point $x=(1,\ldots,1,1)$ satisfies
\[
    x = \frac{q}{N}(e_1+\cdots+e_m) + \frac{1}{N}(N-q,\ldots,N-q,N),
\]
which implies $x \in hS_{d;q,N}$ for $h = \left\lceil (mq+1)/N \right\rceil$. If $B < 0$, then $h \ge 2$. However, the simplex contains no lattice point whose final coordinate is one, as the unique fundamental parallelepiped representative with this final coordinate has height $h > 1$. Furthermore, a sum of lattice points with non-negative integral final coordinates can only sum to one if exactly one summand has a final coordinate of one.

Assume henceforth that $B \ge 0$. Reducing the generator in \eqref{eq:general-coordinate-lattice} modulo the integer lattice reveals that the coordinate lattice is
\begin{equation}\label{eq:constant-lattice}
    \Lambda = \Z^{m+2} + \Z\frac{1}{N}(B,q,\ldots,q,1), \qquad B+mq+1=N.
\end{equation}
The corresponding simplex is $\Delta = \conv(E_0,E_1,\ldots,E_m,E_d)$, with $d=m+1$, and the height function is given by the sum of its $m+2$ coordinates.

Define a lattice projection by $\pi(\lambda) = (N\lambda_0,N\lambda_d)$. Its image is the lattice
\[ K_B = \{(s,t) \in \Z^2 : s \equiv Bt \pmod N\}.
\]
Indeed, \eqref{eq:constant-lattice} implies this congruence, and any $(s,t) \in K_B$ is precisely the projection of the lattice vector $(s,qt,\ldots,qt,t)/N \in \Lambda$. The lattice $K_B$ is generated by the basis $\{(N,0), (B,1)\}$ and thus has covolume $N$. The non-negative lattice points in the kernel of $\pi$ are exactly those of the form
\begin{equation}\label{eq:projection-kernel}
    n_1E_1 + \cdots + n_mE_m, \qquad n_i \in \Z_{\ge 0}.
\end{equation}
To verify this assertion, observe that $\lambda_d=0$ forces the coefficient of the fractional generator in \eqref{eq:constant-lattice} to be an integer multiple of $N$. Consequently, any lattice point in the kernel must have integral coordinates, and the non-negativity constraint restricts it to the stated form.

\subsection{The ceiling Euclidean algorithm}\label{sec:negative-euclid}

The following is the standard continued-fraction construction of
the Hilbert basis of a two-dimensional cone; compare
\cite[Section~3, especially Theorem~3.16]{DHH}. We spell out
its normalization in $K_B$.

Initialize the sequences by setting $R_0=N$, $R_1=B$, $T_0=0$, $T_1=1$.
For $i \ge 1$, provided $R_i > 0$, recursively define
\begin{equation}\label{eq:negative-euclid}
    a_i = \left\lceil\frac{R_{i-1}}{R_i}\right\rceil, \qquad
    R_{i+1} = a_i R_i - R_{i-1}, \qquad
    T_{i+1} = a_i T_i - T_{i-1}.
\end{equation}
Let $k$ be the minimal index such that $R_k=0$, and define the sequence
\[
    U_i=(R_i,T_i), \qquad 0 \le i \le k.
\]
If $B=0$, the algorithm terminates immediately, yielding $k=1$ and the sequence $U_0=(N,0), U_1=(0,1)$. If $B>0$, the computed quotients $a_1,\ldots,a_{k-1}$ yield the negative continued fraction expansion
\[
    \frac{N}{B} = [a_1,\ldots,a_{k-1}]^-.
\]
Indeed, the relation $R_{i-1}/R_i = a_i - R_{i+1}/R_i$ holds, and iteratively applying this equality produces the expansion. Furthermore, each entry $a_i \ge 2$, since the inequality $R_{i-1} > R_i > 0$ is maintained whenever a quotient is evaluated.

\begin{lemma}\label{lem:planar-hilbert-basis}
Adopting the preceding notation, the following properties hold:
\begin{enumerate}
    \item[(i)] $R_0 > R_1 > \cdots > R_k = 0$ and $0 = T_0 < T_1 < \cdots < T_k$;
    \item[(ii)] If $g = \gcd(N,B)$, then $R_{k-1} = g$ and $T_k = N/g$;
    \item[(iii)] $U_i \in K_B$ and $\det(U_i, U_{i+1}) = N$ for all $0 \le i < k$;
    \item[(iv)] The vectors $U_0, \ldots, U_k$ constitute precisely the Hilbert basis of the semigroup $K_B \cap \R_{\ge 0}^2$.
\end{enumerate}
Furthermore, for each $0 \le i < k$, the linear function
\begin{equation}\label{eq:support-functions}
    \psi_i(s,t) = \frac{(T_{i+1}-T_i)s + (R_i-R_{i+1})t}{N}
\end{equation}
satisfies $\psi_i(U_i) = \psi_i(U_{i+1}) = 1$ and $\psi_i(u) \ge 1$ for every non-zero $u \in K_B \cap \R_{\ge 0}^2$. Consequently, the relative interior of the line segment $[U_i, U_{i+1}]$ contains no lattice points of $K_B$.
\end{lemma}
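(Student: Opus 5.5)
We prove the properties in the order (iii), (i), (ii), then the support-function claims, and finally (iv), since each feeds the next.

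\emph{Step (iii).} By induction we show the invariant
\[
R_i \equiv B\,T_i \pmod N
\qquad\text{and}\qquad
R_iT_{i+1}-R_{i+1}T_i=N .
\]
The base case is $U_0=(N,0)$, $U_1=(B,1)$, where $N\cdot 1-B\cdot 0=N$. The recursion \eqref{eq:negative-euclid} is the same linear recurrence applied to $R$ and $T$, so congruences are preserved. For the determinant,
\[
R_{i}T_{i+1}-R_{i+1}T_{i}
=R_i(a_iT_i-T_{i-1})-(a_iR_i-R_{i-1})T_i
=R_{i-1}T_i-R_iT_{i-1},
\]
which equals $N$ by induction.

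\emph{Step (i).} Strict decrease of $R$ follows from $R_{i+1}=a_iR_i-R_{i-1}<R_i$, because $a_i=\lceil R_{i-1}/R_i\rceil<R_{i-1}/R_i+1$. Nonnegativity of $R_{i+1}$ is the defining property of the ceiling. For $T$, we induct on $T_{i+1}-T_i=(a_i-2)T_i+(T_i-T_{i-1})>0$, using $a_i\ge2$.

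\emph{Step (ii).} Each step preserves $\gcd(R_{i-1},R_i)$, as in the Euclidean algorithm. Hence the last nonzero remainder is $R_{k-1}=g$. Setting $i=k-1$ in the determinant identity gives $R_{k-1}T_k=N$, so $T_k=N/g$.

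\emph{Support functions.} Note that $\psi_i$ is the unique linear functional equal to $1$ at $U_i$ and $U_{i+1}$; this follows from Cramer's rule with $\det(U_i,U_{i+1})=N$. The formula \eqref{eq:support-functions} can be checked directly:
\[
N\psi_i(U_i)=(T_{i+1}-T_i)R_i+(R_i-R_{i+1})T_i=N .
\]
Since $(U_i,U_{i+1})$ is a basis of $K_B$ (its covolume is $N=|\det|$), any $u\in K_B$ can be written $u=\alpha U_i+\beta U_{i+1}$ with $\alpha,\beta\in\Z$. Then $\psi_i(u)=\alpha+\beta$.

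\emph{Main obstacle.} The key difficulty is showing $\psi_i(u)\ge1$ for $u$ outside the cone $\cone(U_i,U_{i+1})$. For this we need the following fact: the rays of $U_0,\dots,U_k$ go monotonically from the ray of $(1,0)$ to the ray of $(0,1)$, with consecutive determinants positive. Consequently the cones $\cone(U_j,U_{j+1})$ subdivide $\R^2_{\ge0}$. By (i), $R_i$ decreases and $T_i$ increases, which gives monotone slopes.

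For $u$ in a cone $\cone(U_j,U_{j+1})$, we have $u=\alpha U_j+\beta U_{j+1}$ with $\alpha,\beta\ge0$ integers. It therefore suffices to show $\psi_i(U_j)\ge1$ for all $j$, i.e.\ convexity of the polygonal chain $U_0,\dots,U_k$. We check this by comparing consecutive segments using
\[
U_{j-1}+U_{j+1}=a_jU_j, \qquad a_j\ge2 .
\]
Applying $\psi_i$ gives $\psi_i(U_{j+1})-\psi_i(U_j)\ge\psi_i(U_j)-\psi_i(U_{j-1})$. So $j\mapsto\psi_i(U_j)$ is a convex sequence. It takes the value $1$ at $j=i$ and $j=i+1$, hence it is at least $1$ everywhere.

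Emptiness of the segments' interiors is then immediate. Any lattice point $\alpha U_i+\beta U_{i+1}$ on the open segment would need $\alpha+\beta=1$ with $\alpha,\beta>0$, which is impossible for integers.

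\emph{Step (iv).} First, each $U_j$ is indecomposable. If $U_j=u+u'$ with $u,u'$ nonzero, then $\psi_j(U_j)=1$ while $\psi_j(u)+\psi_j(u')\ge2$, a contradiction. Second, the $U_j$ generate the semigroup. Every $u\in K_B\cap\R^2_{\ge0}$ lies in some $\cone(U_j,U_{j+1})$, and these two vectors form a lattice basis, so $u$ is a nonnegative integer combination of them.

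Since the Hilbert basis is the unique minimal generating set, it is exactly $\{U_0,\dots,U_k\}$.
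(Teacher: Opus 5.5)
Your proposal is correct and follows essentially the same route as the paper's proof: the determinant and gcd invariants of the recurrence, consecutive pairs $(U_i,U_{i+1})$ forming lattice bases whose cones subdivide the quadrant, and propagating $\psi_i(U_j)\ge1$ through $U_{j-1}+U_{j+1}=a_jU_j$ (your convex-sequence phrasing is the paper's forward/backward iteration). One small point to make explicit: the convexity step uses $(a_j-2)\psi_i(U_j)\ge0$, which holds because, by (i), both coefficients $T_{i+1}-T_i$ and $R_i-R_{i+1}$ of $\psi_i$ are positive.
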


\begin{proof}
The ceiling function in \eqref{eq:negative-euclid} ensures that $0 \le R_{i+1} < R_i$; hence, the sequence strictly decreases and terminates. The invariant $\gcd(R_{i-1}, R_i) = \gcd(R_i, R_{i+1})$ implies that the final non-zero remainder $R_{k-1}$ is exactly $g$. Because $a_i \ge 2$, we have
\[
    T_{i+1} - T_i = (a_i - 2)T_i + (T_i - T_{i-1}) > 0,
\]
which follows by induction with the base case $T_1 - T_0 = 1$. These assertions trivially extend to the degenerate case $B=0$.

The initial vectors $U_0$ and $U_1$ belong to $K_B$, and the recurrence relation guarantees that all subsequent $U_i$ remain in $K_B$. Their initial determinant is $N$, and
\[
    \det(U_i, U_{i+1}) = \det(U_i, a_i U_i - U_{i-1}) = \det(U_{i-1}, U_i) = N.
\]
Evaluating this invariant at the final pair yields $gT_k = N$, establishing the formula for $T_k$. Because this determinant precisely equals the covolume of $K_B$, every consecutive pair $\{U_i, U_{i+1}\}$ forms a lattice basis for $K_B$.

The rays generated by $U_i$ advance counterclockwise from the positive horizontal axis to the positive vertical axis; indeed, their consecutive determinants are positive, their first coordinates strictly decrease, and their second coordinates strictly increase. The cones $\cone(U_i, U_{i+1})$ therefore cover the non-negative quadrant. Any lattice point within one such cone admits a non-negative real combination of the bounding pair; since the pair forms a lattice basis, these coefficients are necessarily integers. Thus, the set $\{U_i\}$ generates the entire semigroup $K_B \cap \R_{\ge 0}^2$.

We now establish that these generators are indecomposable. Equation \eqref{eq:support-functions} and the determinant invariant immediately imply $\psi_i(U_i) = \psi_i(U_{i+1}) = 1$. For a fixed index $i$, the evaluations $\psi_i(U_j)$ satisfy the same linear recurrence as the vectors $U_j$. Initializing with two consecutive $1$s and iterating forward, if two successive values satisfy $b \ge a \ge 1$, the subsequent value is $a_j b - a \ge 2b - a \ge b$. Propagating backward via the inverted recurrence yields an analogous monotonic growth. Therefore, $\psi_i(U_j) \ge 1$ for all $j$. By linearity, since the $\{U_j\}$ generate the semigroup, $\psi_i(u) \ge 1$ for every non-zero $u \in K_B \cap \R_{\ge 0}^2$.

If $U_i$ were decomposable into a sum of two non-zero semigroup elements, the sum of their $\psi_i$-evaluations would be at least $2$, contradicting $\psi_i(U_i) = 1$ (for the case of $U_k$, apply the identical logic using $\psi_{k-1}$). Thus, every $U_i$ is indecomposable. Conversely, any non-zero semigroup element that is not a single generator $U_i$ is explicitly decomposable into a sum of generators. This establishes the Hilbert basis assertion.

Finally, any lattice point of $K_B$ lying on the segment $[U_i, U_{i+1}]$ can be uniquely expressed as $\alpha U_i + \beta U_{i+1}$ with $\alpha, \beta \ge 0$ and $\alpha + \beta = 1$. Since $\{U_i, U_{i+1}\}$ is a lattice basis, both coefficients must be integers, restricting the point to the endpoints.
\end{proof}

\subsection{Congruences}

The recurrence below gives the congruence in
\cite[Theorem~2.5]{DLR} and \cite[Theorem~3.6]{Sato}
in the present coordinates. The latter reference assumes
$\gcd(N,B)=1$; the following calculation does not require
that assumption.

\begin{proposition}\label{prop:constant-necessary}
If the simplex $S_{d;q,N}$ possesses the integer decomposition property (IDP), then $B \ge 0$, and every quotient appearing in the sequence \eqref{eq:negative-euclid} must satisfy $a_i \equiv 2 \pmod{m}$.
\end{proposition}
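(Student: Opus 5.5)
The plan is to show that each Hilbert basis vector $U_i$ of the planar semigroup $K_B\cap\R_{\ge0}^2$ lifts to a box point of $\Lambda$ whose height is forced. IDP then requires this height to be one, and that requirement becomes the congruence $a_i\equiv 2\pmod m$ via the recurrence \eqref{eq:negative-euclid}. The statement $B\ge0$ was already established at the start of \cref{sec:constant}, so assume $B\ge0$; if $B=0$ there are no quotients and nothing remains to prove.

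First I would identify the minimal preimages. For $0<i<k$ we have $0<T_i<T_k\le N$ and $0<R_i<N$. Hence the unique box point of residue $T_i$ projects to $U_i$: its first coordinate is $[BT_i]_N/N$, and since $R_i\equiv BT_i\pmod N$ with $0<R_i<N$, this equals $R_i/N$. The middle coordinates are $[qT_i]_N/N$. Write $\mu_i$ for this box point and $w_i=N\,\htop(\mu_i)$. I would then show that $\mu_i$ is indecomposable in $\Lambda\cap\R_{\ge0}^{m+2}$. Any decomposition $\mu_i=\alpha+\beta$ projects to $U_i=\pi(\alpha)+\pi(\beta)$. By \cref{lem:planar-hilbert-basis}(iv) one projection vanishes, so by \eqref{eq:projection-kernel} that summand is a nonnegative combination of $E_1,\dots,E_m$. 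This is impossible because the middle coordinates of $\mu_i$ are less than $1$. So $\mu_i$ is a Hilbert basis element, and by \eqref{eq:idp-hilbert-criterion}, IDP forces $\htop(\mu_i)=1$ for every $0<i<k$.

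Next I would translate the height condition. The condition reads $R_i+m[qT_i]_N+T_i=N$, i.e. $m[qT_i]_N=N-R_i-T_i$. Combine consecutive relations using $a_i$. Set $Q_i=[qT_i]_N$, extended by $Q_0=0$ and by the value at $i=k$ obtained from $R_k=0$. We have $qT_{i+1}=a_iqT_i-qT_{i-1}$, so $Q_{i+1}\equiv a_iQ_i-Q_{i-1}\pmod N$. The linear quantities $N-R_j-T_j$ satisfy
\[
(N-R_{i+1}-T_{i+1})-a_i(N-R_i-T_i)+(N-R_{i-1}-T_{i-1})=(2-a_i)N.
\]
Hence $m(Q_{i+1}-a_iQ_i+Q_{i-1})=(2-a_i)N$ whenever all three heights equal one, or the boundary indices are handled correctly. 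The left side is $m$ times a multiple of $N$, say $mcN$, which gives $a_i-2=-mc$. That is exactly $a_i\equiv2\pmod m$.

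The main obstacle is the boundary indices $i=1$ and $i=k-1$. The relation uses $U_0$ and $U_k$, which are not box points: $U_0=(N,0)$ lifts to $E_0$, and $U_k$ has $T_k=N/g$. For $i=0$, set $Q_0=0$ and use $N-R_0-T_0=0$; this matches $mQ_0=0$. For $U_k=(0,N/g)$, the relevant point is $\lambda$ with residue $T_k$. If $g>1$ this is a box point of height $mQ_k/N+T_k/N$, and indecomposability applies again. If $g=1$ it is $E_d$, and $N-R_k-T_k=0=mQ_k$ with $Q_k=[qN]_N=0$. In the case $g>1$ I would check that the lift of $U_k$ is still indecomposable by the same projection argument. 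Its height must then be one, giving $mQ_k=N-T_k$, which is consistent with the formula $N-R_k-T_k$. This makes the congruence valid at every index $1\le i\le k-1$ without assuming $\gcd(N,B)=1$.
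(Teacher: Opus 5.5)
Your proposal is correct and is essentially the paper's proof. You lift each planar Hilbert basis vector $U_i$ to $\frac1N(R_i,[qT_i]_N,\ldots,[qT_i]_N,T_i)\in\Lambda$ and prove indecomposability through $\pi$ and the kernel description \eqref{eq:projection-kernel}. You then use IDP to force height one and eliminate $R$ and $T$ via the recurrence to get $m(a_iQ_i-Q_{i-1}-Q_{i+1})=(a_i-2)N$ with the bracket divisible by $N$. Your separate treatment of $i=0$ and $i=k$ (including the case $g>1$) just unpacks the paper's uniform definition of $Z_i$ for $0\le i\le k$, which already covers the endpoints without assuming $\gcd(N,B)=1$.
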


\begin{proof}
The necessity of the condition $B \ge 0$ was established previously. For each generator $U_i$ characterized in \cref{lem:planar-hilbert-basis}, define the minimal residue
\[
    \rho_i = [q T_i]_N, \qquad \text{and set} \qquad
    Z_i = \frac{1}{N}(R_i, \rho_i, \ldots, \rho_i, T_i).
\]
The defining congruences $R_i \equiv B T_i \pmod{N}$ and $\rho_i \equiv q T_i \pmod{N}$ immediately guarantee that $Z_i \in \Lambda$. Furthermore, $Z_i$ is a non-negative vector that projects identically to $U_i$.

Suppose that $Z_i = x + y$ for some non-negative lattice vectors $x, y \in \Lambda$. Since their projections must sum to the indecomposable basis element $U_i$, at least one of the projected vectors must be identically zero. By the kernel characterization in \eqref{eq:projection-kernel}, the corresponding summand is constrained to be a non-negative integral combination of the basis vectors $E_1, \ldots, E_m$. However, because $0 \le \rho_i < N$, every internal coordinate of $Z_i$ is less than one. This forces the respective summand to be exactly the zero vector. Consequently, $Z_i$ is indecomposable within the unprojected cone, a property that holds even at the endpoints of the chain.

By \eqref{eq:idp-hilbert-criterion}, IDP forces every indecomposable lattice point to have height one. This height condition imposes the equality
\begin{equation}\label{eq:chain-height-one}
    R_i + m\rho_i + T_i = N \qquad (0 \le i \le k).
\end{equation}
For any internal index $1 \le i < k$, scaling the $i$-th equation by $a_i$ and subtracting the adjacent relations for indices $i-1$ and $i+1$ eliminates the recurrent terms $R$ and $T$, yielding
\[
    (a_i - 2)N = m(a_i\rho_i - \rho_{i-1} - \rho_{i+1}).
\]
Because $\rho_j \equiv q T_j \pmod{N}$ and the recurrence guarantees $a_i T_i - T_{i-1} - T_{i+1} = 0$, the parenthetical expression on the right-hand side is necessarily divisible by $N$. Canceling $N$ establishes that $a_i - 2$ is divisible by $m$. Notably, this divisibility argument holds universally, independent of whether $\gcd(N, B) = 1$ or $\gcd(N, m) = 1$.
\end{proof}

\begin{proposition}\label{prop:chain-lifts}
Suppose that $B \ge 0$ and $a_i \equiv 2 \pmod{m}$ for every quotient in \eqref{eq:negative-euclid}. Then the vectors
\[
    Z_i = \frac{1}{N}(R_i, [qT_i]_N, \ldots, [qT_i]_N, T_i) \qquad (0 \le i \le k)
\]
are lattice points of $\Delta \cap \Lambda$. In particular, they satisfy the height condition \eqref{eq:chain-height-one}.
\end{proposition}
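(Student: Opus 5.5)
The plan is to show that the integers $\rho_i=[qT_i]_N$ are forced to equal the values predicted by the height condition. Membership in the lattice is immediate: $R_i\equiv BT_i$ and $\rho_i\equiv qT_i\pmod N$ show $Z_i\in\Lambda$ exactly as in the proof of \cref{prop:constant-necessary}. Nonnegativity is also clear from \cref{lem:planar-hilbert-basis}(i). So the whole content is the height identity \eqref{eq:chain-height-one}, which also gives $Z_i\in\Delta$.

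I would introduce the candidate values
\[
 \sigma_i=\frac{N-R_i-T_i}{m}\qquad(0\le i\le k)
\]
and prove, by induction on $i$, three facts: $\sigma_i\in\Z$, $\sigma_i\equiv qT_i\pmod N$, and $0\le\sigma_i<N$. Together these give $\sigma_i=[qT_i]_N=\rho_i$, which is precisely $R_i+m\rho_i+T_i=N$.

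The base cases are $\sigma_0=(N-N-0)/m=0=\rho_0$ and $\sigma_1=(N-B-1)/m=q$. The latter holds because $B+mq+1=N$, and $q=[q]_N$ since $q<N$. (When $B=0$ one has $k=1$ and this already finishes the argument.) For the inductive step, insert the recurrence \eqref{eq:negative-euclid} for $R_{i+1}$ and $T_{i+1}$ and use the induction hypothesis $R_j+T_j=N-m\sigma_j$ for $j=i-1,i$. This gives
\[
 \sigma_{i+1}=a_i\sigma_i-\sigma_{i-1}-\frac{a_i-2}{m}\,N .
\]
By the hypothesis $a_i\equiv2\pmod m$, the last term is an integer multiple of $N$. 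Hence $\sigma_{i+1}\in\Z$ and $\sigma_{i+1}\equiv a_iqT_i-qT_{i-1}=qT_{i+1}\pmod N$, using $T_{i+1}=a_iT_i-T_{i-1}$.

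The remaining point, and the only real obstacle, is the range condition $0\le\sigma_i<N$. The upper bound is easy: $R_i+T_i\ge1$, since $U_i\neq0$, so $\sigma_i\le(N-1)/m<N$. For the lower bound I need $f_i:=R_i+T_i\le N$ for all $i$. I would obtain this from discrete convexity. The sequence $f_i$ satisfies $f_{i-1}+f_{i+1}=a_if_i\ge2f_i$ for $1\le i<k$, because $a_i\ge2$ and $f_i>0$. Therefore $f$ is a convex sequence, and its maximum on $\{0,\ldots,k\}$ is attained at an endpoint. By \cref{lem:planar-hilbert-basis}(ii) the endpoints are $f_0=N$ and $f_k=T_k=N/g\le N$, so $f_i\le N$ throughout, that is, $\sigma_i\ge0$. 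This completes the induction, and with it the identification $\sigma_i=\rho_i$ and the claim $Z_i\in\Delta\cap\Lambda$.
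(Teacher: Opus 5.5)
Your proposal is correct and follows essentially the same route as the paper: it also sets $\widetilde{\rho}_i=(N-R_i-T_i)/m$, gets $0<R_i+T_i\le N$ from the convexity relation $f_{i-1}+f_{i+1}=a_if_i\ge 2f_i$ with endpoint values $N$ and $N/g$, and uses the recurrence $\widetilde{\rho}_{i+1}=a_i\widetilde{\rho}_i-\widetilde{\rho}_{i-1}-\frac{a_i-2}{m}N$ to conclude $\widetilde{\rho}_i=[qT_i]_N$. The only cosmetic difference is that the paper first shows integrality of $\widetilde{\rho}_i$ by propagating $R_i+T_i\equiv N\pmod m$ through the recurrence, whereas you obtain integrality inside the same induction that gives the congruence modulo $N$.
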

\begin{proof}
The sequence of positive sums $R_i+T_i$ exhibits non-negative second differences:
\[(R_{i+1}+T_{i+1}) - 2(R_i+T_i) + (R_{i-1}+T_{i-1}) = (a_i-2)(R_i+T_i) \ge 0.
\]
Their successive differences are therefore non-decreasing. By discrete convexity, any sequence with non-decreasing differences is bounded above by the linear chord connecting its endpoints. Consequently, we obtain the bounds
\[0 < R_i+T_i \le \left(1-\frac{i}{k}\right)(R_0+T_0) + \frac{i}{k}(R_k+T_k) \le N,
\]
since the boundary values evaluate to $R_0+T_0=N$ and $R_k+T_k=N/g \le N$.

The initial sums are $N$ and $B+1=N-mq$, both of which are congruent to $N$ modulo $m$. Combining the recurrence relations with the hypothesis $a_i \equiv 2 \pmod{m}$ guarantees that $R_i+T_i \equiv N \pmod{m}$ for all $i$. Thus, the quantity
\[\widetilde{\rho}_i = \frac{N-R_i-T_i}{m}
\]
is an integer satisfying $0 \le \widetilde{\rho}_i < N$. Its initial values are explicitly $\widetilde{\rho}_0=0$ and $\widetilde{\rho}_1=q$. Substituting the recurrence relations into this definition yields
\[\widetilde{\rho}_{i+1} = a_i\widetilde{\rho}_i - \widetilde{\rho}_{i-1} - \frac{a_i-2}{m}N.
\]
Reducing modulo $N$ and proceeding by induction from the initial values, we obtain $\widetilde{\rho}_i \equiv qT_i \pmod{N}$. Because $\widetilde{\rho}_i \in [0, N)$, it coincides exactly with the minimal residue: $\widetilde{\rho}_i = [qT_i]_N$. This establishes the identity $R_i + m[qT_i]_N + T_i = N$. As in the proof of \cref{prop:constant-necessary}, this ensures that $Z_i \in \Lambda$. Because all coordinates of $Z_i$ are non-negative and sum to exactly one, it immediately follows that $Z_i \in \Delta \cap \Lambda$.

In the degenerate case $B=0$, the index set is restricted to zero and one, rendering the recurrence vacuous. The initial values trivially establish the identical assertions.
\end{proof}

\subsection{Proof of \cref{thm:main-constant}}

We now implement the planar construction of
\cite[Proposition~6.5, Lemma~6.6, Propositions~6.7 and~6.9,
and Corollary~6.10]{DHH} in the coordinates
\eqref{eq:constant-lattice}. The construction itself is standard;
the details below specify the affine lattices, supporting functions,
and determinant normalizations used in this specialization.

Assume for this construction that $m\ge2$ and that the arithmetic
condition in \cref{thm:main-constant}\textup{(iv)} holds. Define
\[
    F = \conv(E_1, \ldots, E_m), \qquad
    c = \frac{1}{m}(E_1 + \cdots + E_m), \qquad
    D = \conv(E_0, c, E_d).
\]
The triangle $D$ constitutes the slice of $\Delta$ wherein all middle coordinates coincide. Every lattice point of $\Delta$, excluding the vertices $E_1, \ldots, E_m$, resides entirely within $D$. Indeed, the difference between two middle coordinates of a point in $\Lambda$ is an integer. At a point of $\Delta$ other than $E_1,\ldots,E_m$, every middle coordinate is less than one, so all such differences vanish. Naturally, the remaining vertices $E_0$ and $E_d$ also belong to $D$.

Furthermore, $c \notin \Lambda$: its projection under $\pi$ vanishes, which by \eqref{eq:projection-kernel} would force its middle coordinates to be integers, directly contradicting their fractional value of $1/m$. Define
\[
    \mathcal{A}_D = D \cap \Lambda, \qquad Q = \conv(\mathcal{A}_D).
\]
Because the point $Z_1$ possesses positive middle coordinates $q/N$ and does not lie on the segment $[E_0,E_d]$, the convex hull $Q$ forms a non-degenerate two-dimensional polygon.

Restricted to the affine plane spanned by $D$, the projection $\pi$ acts as an affine bijection onto the domain
\[
    D_N = \{(s,t) \in \R_{\ge 0}^2 : s+t \le N\},
\]
with its explicit inverse mapping given by
\[
    (s,t) \longmapsto \left(\frac{s}{N}, \frac{N-s-t}{mN}, \ldots, \frac{N-s-t}{mN}, \frac{t}{N}\right).
\]
Under this bijection, $c \mapsto (0,0)$, $E_0 \mapsto (N,0)$, $E_d \mapsto (0,N)$, and $Z_i \mapsto U_i$. Consequently, the image $\pi(\mathcal{A}_D)$ contains all generators $U_i$ and is a subset of $(K_B \cap D_N) \setminus \{(0,0)\}$, although this inclusion may be strict.

\begin{lemma}\label{lem:planar-section-decomposition}
Under the hypotheses of \cref{prop:chain-lifts}, we have the geometric decomposition
\begin{equation}\label{eq:section-decomposition}
D = Q \cup \bigcup_{i=0}^{k-1} \conv(c, Z_i, Z_{i+1}),
\end{equation}
where the interiors of the constituent regions are mutually disjoint. The polygonal chain $[Z_0,Z_1], \ldots, [Z_{k-1},Z_k]$ forms the portion of the boundary of $Q$ visible from $c$, encompassing all intermediate collinear lattice points. Furthermore, there exists a regular triangulation of the point configuration $\mathcal{A}_D \cup \{c\}$ in which the simplices incident to $c$ are precisely the triangles $\conv(c, Z_i, Z_{i+1})$, and the remaining triangles constitute a regular unimodular triangulation of $Q$ with respect to its affine lattice.
\end{lemma}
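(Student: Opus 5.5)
Everything is transported to the plane by the affine bijection $\pi|_{\aff D}\colon D\to D_N$, which sends $c\mapsto(0,0)$, $E_0\mapsto(N,0)$, $E_d\mapsto(0,N)$ and $Z_i\mapsto U_i$. Under $\pi$, the induced affine lattice of $\aff D$, namely $(\aff D)\cap\Lambda$, goes into the affine lattice $K_B$. Since every point of $\Delta\cap\Lambda$ other than $E_1,\dots,E_m$ lies in $D$, and $c\notin\Lambda$, the image $\pi(\mathcal A_D)$ is exactly $(K_B\cap D_N)\setminus\{(0,0)\}$. (I will check that the inverse formula sends each point of $K_B\cap D_N$ into $\Lambda$. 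This uses the congruence $N-s-t\equiv mqt\pmod N$ together with $m\mid N-s-t$, which I expect to follow as in the proof of \cref{prop:chain-lifts}. If this fails, the inclusion stated in the text still suffices below.)

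Let $Q'=\conv\bigl((K_B\cap D_N)\setminus\{0\}\bigr)$. The first step is to show that the lower-left boundary of $Q'$, the part visible from the origin, is the chain $U_0U_1\cdots U_k$. For each $i$, \cref{lem:planar-hilbert-basis} gives that $\psi_i\ge1$ on all nonzero points of $K_B\cap\R^2_{\ge0}$, with equality at $U_i$ and $U_{i+1}$. Hence $\{\psi_i=1\}$ is a supporting line of $Q'$ that contains the edge $[U_i,U_{i+1}]$, and the origin lies strictly on the other side since $\psi_i(0)=0$. The chain runs from $U_0=(N,0)$ to $U_k=(0,N/g)$, i.e. from the horizontal axis to the vertical axis. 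It is therefore the full boundary arc of $Q'$ seen from $0$, and the region between this arc and $0$ is $\bigcup_i\conv(0,U_i,U_{i+1})$. These cones/triangles have disjoint interiors because the rays $U_i$ advance monotonically counterclockwise. Pulling back by $\pi$ gives \eqref{eq:section-decomposition} with disjoint interiors. Collinear lattice points lying on the chain can only be the $U_i$ themselves, since the relative interiors of the segments contain no lattice points by the last clause of \cref{lem:planar-hilbert-basis}. Consecutive segments may still be collinear (this happens when $a_i=2$), which is why the statement speaks of "intermediate collinear lattice points".

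For the triangulation I will use a lifting argument. Define $w(c)=0$ and $w(p)=\max\bigl(1,\max_i\psi_i(\pi(p))\bigr)$ for $p\in\mathcal A_D$. Equivalently, lift by the convex piecewise-linear function $\Psi=\max_i\psi_i$, which equals $1$ on the chain and exceeds $1$ strictly off it on the $Q$-side. It is simpler to take the lifting $w_0$ given by $w_0(c)=0$ and $w_0(p)=1$ for $p\in\mathcal A_D$, modified so that the induced subdivision contains the cells $\conv(c,Z_i,Z_{i+1})$ and $Q$ itself. With $w_0=1$ on $\mathcal A_D$, the lower envelope over $D$ is the cone from $(c,0)$ over the boundary chain, together with the flat top $Q$ at height $1$.

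Here I have to check that the affine function $\ell_i$ with $\ell_i(c)=0$ and $\ell_i=1$ at $Z_i,Z_{i+1}$, i.e. $\psi_i\circ\pi$ extended affinely, satisfies $\ell_i\le 1$ on $\mathcal A_D$. This goes the wrong way: $\psi_i\ge1$ on $\mathcal A_D$. So I will flip signs. Take $w(c)=0$ and $w=-1$ on $\mathcal A_D$, which makes the lower hull a "tent" whose lower faces over $D$ are exactly the cones $\conv(c,Z_i,Z_{i+1})$, supported by $-\psi_i\circ\pi$, together with $Q$, supported by the constant $-1$. Strictness \eqref{eq:strict-lifting} off the chain follows from $\psi_i>1$ there; collinear $U_j$ on the same supporting line give degenerate ties, which require a small perturbation.

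Next, apply \cref{lem:small-perturbation} with $w_0$ perturbed generically. This yields a regular triangulation refining this subdivision, and on $Q$ it restricts to a regular triangulation using all points of $\mathcal A_D$ (choosing the perturbation as a generic strictly convex function on $\mathcal A_D$ ensures all points are used). By \cref{lem:polygon}, that restriction is unimodular with respect to the affine lattice of $\aff D$. The cells $\conv(c,Z_i,Z_{i+1})$ are triangles with no other points of the configuration, so they are kept intact.

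I expect the main obstacle to be the bookkeeping of the lattice: showing that $\pi$ maps the affine lattice $(\aff D)\cap\Lambda$ isomorphically onto a translate/sublattice compatible with $K_B$, so that "unimodular in $Q$'s affine lattice" is meaningful. I also need to handle the collinear chain vertices in the perturbation so that every $Z_i$ remains a vertex.
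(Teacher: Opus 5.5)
Your final argument is essentially the paper's. The decomposition comes from the supporting lines $\{\psi_i=1\}$ and the counterclockwise order of the $U_i$. Your lifting $c\mapsto0$, $\mathcal A_D\mapsto-1$, with supporting functions $-\psi_i\circ\pi$ and the constant $-1$, is the paper's lifting shifted by $-1$. The finish via a strictly convex plus generic perturbation, \cref{lem:small-perturbation} and \cref{lem:polygon} is also the paper's.

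One assertion is false, however, and it is the one you planned to check. In general $\pi(\mathcal A_D)$ is a proper subset of $(K_B\cap D_N)\setminus\{(0,0)\}$, as the paper remarks. For $(s,t)\in K_B\cap D_N$, the preimage $(s/N,(N-s-t)/(mN),\ldots,(N-s-t)/(mN),t/N)$ lies in $\Lambda$ exactly when $N-s-t\equiv mqt\pmod{mN}$. The divisibility $m\mid N-R_i-T_i$ in \cref{prop:chain-lifts} comes from the recurrence together with $a_i\equiv2\pmod m$. So it holds along the chain, not for arbitrary points. In the paper's example $(m,q,N)=(2,2,7)$, the point $(4,2)=2U_1$ lies in $K_B\cap D_7$, but its preimage $(4/7,1/14,1/14,2/7)$ is not in $\Lambda$; here $N-s-t=1$ is odd.

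So you must use your fallback, and you should say why it works:
\begin{itemize}
\item $\pi(\mathcal A_D)$ contains every $U_i$ and also $(0,N)=\pi(E_d)$.
\item Every nonzero point of $K_B\cap D_N$ satisfies $\min_i\psi_i\ge1$.
\end{itemize}
Hence $\pi(Q)=Q'=\conv(U_0,\ldots,U_k,(0,N))$, and your analysis of $Q'$ transfers verbatim to $Q$.

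The lattice bookkeeping you call the main obstacle is not needed, and the isomorphism you hope for does not exist. The map $\pi$ sends $\aff(D)\cap\Lambda$ onto a coset of an index-$m$ sublattice of $K_B$, not onto $K_B$. Unimodularity is measured in $\aff(D)\cap\Lambda$ itself. This is a rank-two affine lattice, since it contains the affinely independent points $E_0,E_d,Z_1$. Also $Q\cap\Lambda=\mathcal A_D$ because $Q\subseteq D$. So \cref{lem:polygon} applies directly to any triangulation of $Q$ that uses all of $\mathcal A_D$.

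Finally, delete the abandoned first lifting. The piecewise-linear function equal to $1$ on the chain is $\min_i\psi_i$, not $\max_i\psi_i$; for example, $\psi_0(U_2)=3$ in the example above.
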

\begin{proof}
For each $i$, \cref{lem:planar-hilbert-basis} guarantees that $\psi_i \ge 1$ on $\pi(\mathcal{A}_D)$ and thus on $\pi(Q)$, with equality on $[U_i,U_{i+1}]$. Several consecutive segments may belong to the same supporting line. Every ray emanating from the origin into the non-negative quadrant falls between the directions of some consecutive pair $U_i, U_{i+1}$ and intersects the segment $[U_i, U_{i+1}]$ exactly once. This intersection point lies in $D_N$ because its endpoints belong to $D_N$. The ray subsequently exits $D_N$ through the boundary segment $[(N,0), (0,N)]$, which is contained entirely within $\pi(Q)$. By convexity, the ray segment connecting the polygonal chain to this exit boundary lies in $\pi(Q)$, whereas the initial ray segment from the origin to the chain is contained in $\conv(0, U_i, U_{i+1})$. This confirms the spatial coverage asserted in \eqref{eq:section-decomposition}. The inequalities $\psi_i \ge 1$ preclude any interior overlap with $Q$, and the radially ordered directions of the $U_i$ prevent interior intersections among the triangular regions.

To construct the regular triangulation, assign a height of zero to all points in $\mathcal{A}_D$ and a height of one to $c$. The identically zero affine functional trivially supports the lifted polygon $Q$ from below. For each $i$, the affine functional
\[
    p \longmapsto 1 - \psi_i(\pi(p))
\]
evaluates to one at $c$, to zero at both $Z_i$ and $Z_{i+1}$, and is non-positive across $\mathcal{A}_D$. It therefore serves as another lower supporting functional. Whenever several consecutive segments of the chain are collinear, their corresponding functionals coincide, merging the associated triangular regions into a single coarse cell. By the preceding geometric analysis, these coarse cells, together with $Q$, completely cover $D$ and thus define a coarse lower subdivision.

Fix affine Euclidean coordinates on the plane spanning $D$. For a sufficiently small $\eps > 0$, perturb the zero height of each $p \in \mathcal{A}_D$ to $\eps\|p\|^2$, while maintaining the height of $c$ at one. For any fixed $p \in \mathcal{A}_D$, the supporting functional $\eps(2\langle p, x \rangle - \|p\|^2)$ bounds $p$ from below relative to all other points in $\mathcal{A}_D$. Provided $\eps$ is chosen small enough, its value at $c$ remains less than one. Consequently, every point of $\mathcal{A}_D$ appears as a lower vertex in the lifted configuration. The point $c$, being an extreme point of $D$, inherently remains a lower vertex. Applying a further, sufficiently small generic perturbation to the heights of $\mathcal{A}_D$ preserves these structural properties and, by \cref{lem:small-perturbation}, induces a triangulation that refines the coarse subdivision.

No point of $\mathcal{A}_D$ lies interior to a coarse cell outside of $Q$, as its projection would violate the bound $\psi_i \ge 1$. Furthermore, along the base chain, \cref{lem:planar-hilbert-basis} explicitly precludes the existence of any lattice points between $Z_i$ and $Z_{i+1}$. Therefore, any triangulation of such a coarse cell utilizing all available configuration points must consist exclusively of the triangles $\conv(c, Z_i, Z_{i+1})$. The remaining triangles in the refinement triangulate $Q$ using all of its internal and boundary lattice points. To specify the appropriate affine lattice, we observe that the isomorphism $V$ maps the affine plane of $D$ to the set
\[
\{(1, x, \ldots, x, z) : x, z \in \R\},
\]
which intrinsically carries a lattice isomorphic to $\Z^2$ via the coordinates $x, z \in \Z$. Consequently, the affine lattice of $Q$ is isomorphic to $\Z^2$. The emptiness criterion from \cref{lem:polygon} then guarantees that all constituent triangles of $Q$ are unimodular with respect to this lattice. Finally, the globally defined lifting inherently ensures the resulting triangulation is regular.
\end{proof}

Let $\mathcal{T}_Q$ denote the triangulation of $Q$ constructed in \cref{lem:planar-section-decomposition}. For $0 \le i < k$, define the simplex
\begin{equation}\label{eq:chain-simplices}
    \Sigma_i = \conv(E_1, \ldots, E_m, Z_i, Z_{i+1}).
\end{equation}
Furthermore, for each triangle $\tau \in \mathcal{T}_Q$ and index $1 \le j \le m$, define
\begin{equation}\label{eq:polygon-simplices}
    \Sigma_{\tau,j} = \conv\bigl(\vertices(\tau) \cup \{E_\ell : 1 \le \ell \le m, \ \ell \ne j\}\bigr).
\end{equation}
By construction, each of the specified vertex sets contains exactly $m+2 = d+1$ points.

\begin{lemma}\label{lem:lifted-volumes}
Every simplex defined in \eqref{eq:chain-simplices} and \eqref{eq:polygon-simplices} is full-dimensional and unimodular with respect to the affine lattice of $\Delta$.
\end{lemma}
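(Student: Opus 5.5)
We work throughout in the coordinates of $\Lambda$ from \eqref{eq:constant-lattice}. The affine lattice of $\Delta$ is $\Lambda\cap\{\sum\lambda_i=1\}$, and its translation lattice is $\Lambda_0=\Lambda\cap\{\sum\lambda_i=0\}$. For a simplex with vertices $w_0,\dots,w_{m+1}\in\Delta\cap\Lambda$, unimodularity means that the differences $w_j-w_0$ form a basis of $\Lambda_0$. Equivalently, the $(m+2)\times(m+2)$ matrix with columns $w_j$ has $|\det|$ equal to the covolume of $\Lambda$, which is $1/N$, since $\Lambda/\Z^{m+2}$ is cyclic of order $N$. This homogenized criterion also yields full-dimensionality, because a nonzero determinant is all that is required. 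The plan is therefore to compute one $(m+2)\times(m+2)$ determinant for each family and show that it equals $\pm1/N$.

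\emph{Chain simplices $\Sigma_i$.} The columns are $E_1,\dots,E_m,Z_i,Z_{i+1}$. We expand along the columns $E_1,\dots,E_m$. This deletes the middle rows and leaves the $2\times2$ minor formed by the $0$th and $d$th coordinates of $Z_i,Z_{i+1}$, namely
\[
\frac{1}{N^2}\det\begin{pmatrix}R_i&R_{i+1}\\T_i&T_{i+1}\end{pmatrix}=\frac{\det(U_i,U_{i+1})}{N^2}=\frac1N
\]
by \cref{lem:planar-hilbert-basis}(iii). The determinant is thus $\pm1/N$, as required. This step uses only that $Z_i\in\Lambda$, which is \cref{prop:chain-lifts}.

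\emph{Polygon simplices $\Sigma_{\tau,j}$.} Let $\tau=\conv(p_0,p_1,p_2)$ with $p_r\in\mathcal A_D$. Each $p_r$ has the form $(\alpha_r,x_r,\dots,x_r,\gamma_r)$ with $\alpha_r+mx_r+\gamma_r=1$. Expanding along the columns $E_\ell$ with $\ell\ne j$ leaves a $3\times3$ determinant in the coordinates $0$, $j$ and $d$ of $p_0,p_1,p_2$. Adding $m$ times row $j$ and the other rows to row $0$ replaces that row by $(1,1,1)$, so the determinant becomes $\det\begin{pmatrix}1&1&1\\x_0&x_1&x_2\\\gamma_0&\gamma_1&\gamma_2\end{pmatrix}$. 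This is exactly twice the area of $\tau$ in the $(x,z)$ coordinates of the plane $\{(1,x,\dots,x,z)\}$ used in \cref{lem:planar-section-decomposition}, but measured before applying $V$. We must therefore compare the lattice $\Lambda\cap\aff(D)$ with the affine lattice of $Q$. Applying $V$ to such a point gives spatial coordinates $(\,\cdot\,,x',\dots,x',z')$ with $x'=x+\gamma a$ and $z'=N\gamma$ (here $a=N-q$). The map $(x,\gamma)\mapsto(x',z')$ is linear with determinant $N$, and it carries $\Lambda\cap\aff(D)$ onto the $\Z^2$ coordinates $x',z'\in\Z$. The unimodular triangles of \cref{lem:planar-section-decomposition} have area determinant $1$ in $(x',z')$, so their determinant in $(x,\gamma)$ is $1/N$. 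This gives $|\det|=1/N$ again.

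The main obstacle is precisely this bookkeeping in the second case. One has to verify that the affine lattice in which \cref{lem:polygon} was applied is the one induced by $\Lambda$ via $V$, and that no extra index arises from $\gcd(N,B)>1$. As a safeguard, I would first cross-check the whole computation in $V$-coordinates: there the simplex $V\Sigma_{\tau,j}$ is spanned by the $e_\ell$ with $\ell\ne j$ together with three points $(x',\dots,x',z')$, and its lattice determinant reduces directly to the planar determinant in $(x',z')$, which is $\pm1$. This route avoids the $1/N$ normalization entirely. I would present the $V$-coordinate version if the factor tracking becomes delicate.
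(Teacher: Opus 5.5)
Your proof is correct and essentially the paper's. Your $\Sigma_i$ computation, expanding along $E_1,\ldots,E_m$ to get $\det(U_i,U_{i+1})/N^2=1/N$, is identical to the paper's. For $\Sigma_{\tau,j}$ the paper uses exactly your fallback: it passes to spatial coordinates, where a unimodular shear isolates the planar $(x,z)$-determinant of $\tau$. Your primary route stays in $\Lambda$-coordinates and transports the area through $(x,\gamma)\mapsto(x+(N-q)\gamma,N\gamma)$, which has determinant $N$; this is an equally valid bookkeeping of the same determinant.
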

\begin{proof}
Because the transformation matrix satisfies $|\det V| = N$, the normalized volume in the original lattice is exactly $N$ times the absolute determinant of the homogenized vertices in the coordinate lattice. For $\Sigma_i$, expanding this determinant along the standard basis columns $E_1, \ldots, E_m$ leaves a non-zero minor involving only coordinates $0$ and $d$, which evaluates to
\[
\det\begin{pmatrix}
 R_i/N & R_{i+1}/N \\ 
 T_i/N & T_{i+1}/N
 \end{pmatrix}.
\]
Thus, computing the normalized volume yields
\[
    \nvol(\Sigma_i) = N\frac{|\det(U_i, U_{i+1})|}{N^2} = 1.
\]
Here, $\nvol$ denotes the normalized volume inherited from the original simplex, which is equivalently measured relative to the difference lattice $\Lambda \cap \{\htop=0\}$ parallel to $\aff(\Delta)$.

To evaluate $\Sigma_{\tau,j}$, we compute the volume directly in the original spatial coordinates. The vertices of $\tau$ assume the restricted form $(x, \ldots, x, z)$, while the remaining $m-1$ vertices are precisely $e_\ell$ for $\ell \ne j$. Applying the integral linear coordinate transformation
\[(x_1, \ldots, x_m, z) \longmapsto \bigl((x_\ell-x_j)_{\ell \ne j}, x_j, z\bigr)
\]
preserves the lattice structure; this transformation is evidently unimodular, as its inverse simply recovers $x_j$ and subsequently adds $x_j$ to the other $m-1$ coordinates. Translating a selected vertex of $\tau$ to the origin, we construct the corresponding edge matrix. In the first $m-1$ rows, the columns corresponding to the vectors $e_\ell$ form an identity block, whereas the two edge columns originating from $\tau$ identically vanish in these rows. Expanding the determinant along this identity block isolates precisely the $2 \times 2$ determinant formed by the two edges of $\tau$ within the $(x,z)$-lattice. By \cref{lem:planar-section-decomposition}, this minor evaluates to $\pm 1$, which establishes both full-dimensionality and unimodularity.
\end{proof}

\begin{proposition}\label{prop:constant-triangulation}
Under the hypotheses of \cref{prop:chain-lifts}, with $m\ge2$, the simplices defined in \eqref{eq:chain-simplices} and \eqref{eq:polygon-simplices} constitute a regular unimodular triangulation of $\Delta$, and consequently of $S_{d;q,N}$.
\end{proposition}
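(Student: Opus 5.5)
The plan is to recognize the proposed collection as a join-type construction and verify three things: the simplices cover $\Delta$, they meet properly, and they are induced by a single lifting. Unimodularity is already \cref{lem:lifted-volumes}.

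\textbf{Covering.} Write every point $x\in\Delta$ as a combination of a point of the facet $F=\conv(E_1,\ldots,E_m)$ and a point of the segment $[E_0,E_d]$. We organize this through the map that averages the middle coordinates. Let $\alpha:\Delta\to D$ replace the middle coordinates $(\lambda_1,\ldots,\lambda_m)$ by their mean repeated $m$ times; this map is affine and surjective onto $D$, with $\alpha(F)=\{c\}$. The triangulation of $\mathcal A_D\cup\{c\}$ from \cref{lem:planar-section-decomposition} has two kinds of cells, and we pull each back.

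\begin{enumerate}
\item[(a)] Triangles $\conv(c,Z_i,Z_{i+1})$. Their preimage under $\alpha$ is exactly $\Sigma_i=\conv(F\cup\{Z_i,Z_{i+1}\})$. Indeed, any point with $\alpha(x)=\mu c+\nu Z_i+\xi Z_{i+1}$ is $\mu f+\nu Z_i+\xi Z_{i+1}$ with $f\in F$, using that $\alpha^{-1}(c)\cap\Delta=F$.
\item[(b)] Triangles $\tau\in\mathcal T_Q$. Their preimage is $\conv(\vertices(\tau)\cup F)$, a polytope of dimension $m+1$ equal to the join $\tau * F$ up to the identification $\alpha$. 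Here $F$ carries the extra coordinates and $\tau$ sits at $c$ in the averaging direction. We split it into the $m$ simplices $\Sigma_{\tau,j}$, which is the standard staircase, or pulling, decomposition of a prism-like product. Concretely, a point $y+\sum_\ell \mu_\ell E_\ell$ with $y\in\tau$ writes its middle coordinates as (common value from $\tau$) plus $\mu_\ell$. Subtracting $\min_\ell\mu_\ell$ times $m c$, absorbed into the weight of $\tau$ via $c\in$ the relevant triangle of the planar picture, leaves a representation avoiding $E_j$ for $j$ the argmin.
\end{enumerate}

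I expect case (b) to be the delicate point. Since $c\notin Q$, the honest statement is that $\conv(\tau\cup F)$ is cut by the hyperplanes $\lambda_j=\lambda_\ell$ into the pieces $\Sigma_{\tau,j}$, each equal to $\{x\in\conv(\tau\cup F):\lambda_j\le\lambda_\ell\ \forall\ell\}$. I would verify this equality directly: vertices of $\tau$ have equal middle coordinates, and $E_\ell$ for $\ell\ne j$ satisfy $\lambda_j=0\le\lambda_\ell$, so the inclusion $\subseteq$ is immediate. Volume comparison then gives equality, because the pieces have equal normalized volume ($1$ each by \cref{lem:lifted-volumes}) and the total volume of $\conv(\tau\cup F)$ is $m$. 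That volume comes from the same determinant expansion as in \cref{lem:lifted-volumes}, namely $(m-1)!$-normalized $F$ times the area of $\tau$, with the factor $m$ coming from the face $F$ having normalized volume $1$ in its $(m-1)$-dimensional lattice. Proper intersection within one join follows because the regions $\lambda_j=\min$ meet along the hyperplanes $\lambda_j=\lambda_\ell$, which are spanned by common faces. Across different cells it follows from $\alpha$-preimages of faces of a planar triangulation being faces.

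\textbf{Regularity.} We use an explicit lifting. Lift $\mathcal A_D$ by the regular heights $w_D$ of \cref{lem:planar-section-decomposition}. For $E_\ell$ we want a height whose $\alpha$-average reproduces $w_D(c)=1$, plus a strictly convex correction separating the $E_j$-chambers. So set $w(E_\ell)=1+\eps\,\phi_\ell$, and add to every point the convex function $\eps\sum_\ell(\lambda_\ell-\bar\lambda)^2$, or more simply use the piecewise-linear convex function $\eps\max_\ell(\lambda_\ell-\bar\lambda)$ evaluated at lattice points. The lower hull of $w_D\circ\alpha$ alone induces the coarse subdivision $\{\alpha^{-1}(\text{cell})\}$. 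A small perturbation by a convex function whose domains of linearity are the chambers $\lambda_j=\min$ refines the join cells into the $\Sigma_{\tau,j}$ and leaves the simplices $\Sigma_i$ untouched, by \cref{lem:small-perturbation}. Strictness \eqref{eq:strict-lifting} at lattice points not in a simplex follows because every lattice point of $\Delta$ lies in $D\cup\{E_1,\ldots,E_m\}$, as shown just before \cref{lem:planar-section-decomposition}. Finally, $V$ transports everything to $S_{d;q,N}$ as an affine lattice isomorphism.
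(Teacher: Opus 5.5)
Your averaging map $\alpha$ is the wrong projection, and both pull-back claims (a) and (b) are false. In (a), the vector $x-\nu Z_i-\xi Z_{i+1}$ lies in $\mu\aff(F)$ but not necessarily in $\mu F$, because its middle coordinates can be negative. Take $(d,q,N)=(3,2,7)$, so that $Z_1=\frac17(2,2,2,1)$ and $Z_2=\frac17(1,1,1,4)$. The point $x=(\frac17,0,\frac{11}{14},\frac1{14})\in\Delta$ satisfies $\alpha(x)=\frac12(c+Z_1)$. Yet $x\notin\Sigma_1$: the condition $\lambda_1=0$ forces zero weight on $Z_1$ and $Z_2$, and then $\lambda_0=0$, a contradiction. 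In the true triangulation, $x=\frac3{14}(\frac23,0,0,\frac13)+\frac{11}{14}E_2$ lies in some $\Sigma_{\tau,1}$.

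In (b), $\alpha(\Sigma_{\tau,j})=\conv(\tau\cup\{c\})\ne\tau$, so the $\Sigma_{\tau,j}$ are not even contained in $\alpha^{-1}(\tau)$. Your fallback fails as well. Since $\aff(\tau)\cap\aff(F)=\{c\}$, the set $\conv(\tau\cup F)$ is not a join, and it strictly contains $\bigcup_j\Sigma_{\tau,j}$. Indeed $c\in F$, whereas a point of $\Sigma_{\tau,j}$ with $\lambda_0=\lambda_d=0$ must lie in $F_j=\conv(E_\ell:\ell\ne j)$, because $c\notin Q$. The $\Sigma_{\tau,j}$ have disjoint interiors and their closed union misses $c$, so $\nvol\conv(\tau\cup F)>m$. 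Also each set $\{x\in\conv(\tau\cup F):\lambda_j\le\lambda_\ell\ \forall\ell\}$ contains $c$, which lies in no $\Sigma_{\tau,j}$. So your volume comparison, your covering, and your face-to-face argument across cells all fail.

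The regularity paragraph rests on the same picture. The sets $\alpha^{-1}(\text{cell})\cap\Delta$ are not convex hulls of configuration points. For example, $\alpha^{-1}(\conv(c,Z_1,Z_2))\cap\Delta$ contains the $x$ above, but its only lattice points are $E_1,E_2,Z_1,Z_2$, whose hull is $\Sigma_1$. So these sets cannot be the cells induced by $w_D\circ\alpha$. In fact $w_D\circ\alpha$ (heights $w_D$ on $\mathcal A_D$ and $1$ on every $E_\ell$) is exactly the paper's lifting, and it already induces the fine triangulation. Your corrections $\eps\max_\ell(\lambda_\ell-\bar\lambda)$ and $\eps\sum_\ell(\lambda_\ell-\bar\lambda)^2$ vanish on $\mathcal A_D$ and take a single common value at all $E_\ell$, so they could not separate chambers anyway.

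What you are missing is the piecewise-linear projection the paper uses. Decompose $\Delta=\bigcup_{j=1}^m\conv(D\cup F_j)$; each piece is a simplex, the join of $D$ and $F_j$. Suppose $a=\min_\ell\lambda_\ell=\lambda_j$ and $\beta=\sum_\ell(\lambda_\ell-a)<1$. Then $\lambda=(1-\beta)p+\sum_{\ell\ne j}(\lambda_\ell-a)E_\ell$ with $p=\frac1{1-\beta}(\lambda_0,a,\ldots,a,\lambda_d)\in D$. Pulling planar cells back along this map gives $\conv(\tau\cup F_j)=\Sigma_{\tau,j}$. The $m$ pieces $\conv(\{c,Z_i,Z_{i+1}\}\cup F_j)$ reassemble into $\Sigma_i$ because $F=\bigcup_j\conv(\{c\}\cup F_j)$.

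For regularity, extend each planar lower functional $\varphi$ by the value $1$ at $E_{\ell'}$ for $\ell'\ne j$. The relation $c=\frac1m\sum_\ell E_\ell$ then forces the value $m\varphi(c)-(m-1)<1$ at $E_j$ when $c\notin\tau$. For the chain triangles, it forces the value $1$ at every $E_\ell$. These give the strict lower-facet inequalities directly.
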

\begin{proof}
Let $w$ denote the planar lifting introduced in \cref{lem:planar-section-decomposition}, satisfying $w(c)=1$. We retain these assigned heights for all points in $\mathcal{A}_D$ and further prescribe a height of one to each basis vertex $E_1, \ldots, E_m$. We first establish that all specified simplices emerge as lower facets under this unified lifting.

For a planar triangle $\conv(c, Z_i, Z_{i+1})$, let $\ell$ denote its lower supporting affine functional restricted to the plane of $D$. By definition, $\ell(c)=1$, and equality with the lifting heights is achieved exactly at the three vertices of this triangle. There exists a unique affine functional on the hyperplane spanning $\Delta$ that restricts to $\ell$ on $D$ and evaluates to one on each $E_j$. Algebraic consistency is guaranteed by the barycentric relation $c = (E_1 + \cdots + E_m)/m$, which perfectly recovers the prescribed value $\ell(c)=1$. Explicitly, because an affine functional on this hyperplane is uniquely determined by its evaluations at the basis $\{E_0, E_1, \ldots, E_m, E_d\}$, adopting the evaluations of $\ell$ at $E_0$ and $E_d$ while setting all middle evaluations to one correctly recovers $\ell$ on the affine basis $\{E_0, c, E_d\}$ of $D$. The supporting inequalities at all remaining points of $\mathcal{A}_D$ are inherited directly from the planar configuration. Thus, the contact locus (the subset achieving equality) within the full lattice configuration is precisely $\{E_1, \ldots, E_m, Z_i, Z_{i+1}\}$, verifying that $\Sigma_i$ is a lower facet.

Next, let $\tau \in \mathcal{T}_Q$ with planar lower supporting functional $\ell$. Because $c$ is not a vertex of $\tau$ and the triangulation is generic, we have $\ell(c) < 1$. Fix an index $j$. We extend $\ell$ to the ambient affine hyperplane by assigning a value of one at $E_\ell$ for all $\ell \ne j$. The defining barycentric relation for $c$ thereby forces the functional's value at the remaining vertex $E_j$ to evaluate to
\[m\ell(c) - (m-1) < 1.\]
As before, the inequalities across $\mathcal{A}_D$ are governed by the planar bounds. Consequently, this extension constitutes a valid lower supporting functional whose contact locus is exactly the vertex set of $\Sigma_{\tau,j}$. \cref{lem:lifted-volumes} confirms that each such equality set spans a full-dimensional simplex.

For completeness, we verify that these lower facets geometrically cover $\Delta$, rather than relying exclusively on a volume argument. Let $\lambda \in \Delta$ and define
\[a = \min_{1 \le i \le m} \lambda_i, \qquad \beta = \sum_{i=1}^m (\lambda_i - a).
\]
Clearly, $0 \le \beta \le 1$. If $\beta=1$, then $\lambda_0 = \lambda_d = a = 0$, forcing $\lambda \in F$, which trivially resides within every $\Sigma_i$. If $\beta < 1$, define the normalized point
\[p = \frac{1}{1-\beta} (\lambda_0, a, \ldots, a, \lambda_d) \in D.
\]
Selecting an index $j$ that attains the minimum $a$, we decompose $\lambda$ as
\[\lambda = (1-\beta)p + \sum_{\ell \ne j} (\lambda_\ell - a)E_\ell.
\]
The coefficients are manifestly non-negative and sum to exactly one. If $p$ falls within a triangle $\tau \in \mathcal{T}_Q$, this convex combination embeds $\lambda$ in $\Sigma_{\tau,j}$. Alternatively, if $p \in \conv(c, Z_i, Z_{i+1})$, the inclusion $c \in F$ ensures that the identical convex combination places $\lambda$ inside $\Sigma_i$. Because the planar triangles completely cover $D$, all points of $\Delta$ are exhaustively covered.

By the regular subdivision construction in \cref{sec:lifting}, the projections of these lower faces intersect in common faces. Having exhibited a complete covering by full-dimensional lower simplices, we conclude that these simplices form the regular triangulation induced by the prescribed lifting. Unimodularity follows immediately from \cref{lem:lifted-volumes}.
\end{proof}

\begin{proof}[Proof of \cref{thm:main-constant}]
Suppose that $d \ge 3$. If the simplex possesses the IDP, the necessity of the arithmetic condition follows immediately from \cref{prop:constant-necessary}. Conversely, this arithmetic condition yields the required chain points via \cref{prop:chain-lifts} and induces the regular unimodular triangulation constructed in \cref{prop:constant-triangulation}. Because the existence of a unimodular triangulation guarantees the IDP by \cref{lem:unimodular-idp}, these cyclic implications establish all stated equivalences, including the boundary case $B=0$.

In dimension $d=2$, every $S_{2;q,N}$ reduces to a two-dimensional lattice polygon, which inherently admits a regular unimodular triangulation by \cref{lem:polygon}. Furthermore, the parameter bound $B = N-q-1 \ge 0$ holds universally, and the associated congruence condition modulo $m=1$ is trivially satisfied. Therefore, the arithmetic condition is always met, and the theorem holds in this dimension.
\end{proof}

\subsection{An equivalent remainder algorithm}

Remainder criteria for the corresponding quotient lattices also
appear in \cite[Theorem~3.5]{Sato}. We derive the following
two-integer recursion directly from the negative continued fraction.

The continued fraction computation can be circumvented in practice. Fix $m \ge 1$. Let $\mathcal{C}_m(N,q)$ denote the arithmetic condition defined in \cref{thm:main-constant}, extending this notation by setting $\mathcal{C}_m(N,0) = \mathrm{true}$ for all $N \ge 1$.

\begin{proposition}\label{prop:remainder-test}
For any $q > 0$, define $L = mq+1$. Then the condition evaluates recursively as follows:
\begin{equation}\label{eq:remainder-test}
    \mathcal{C}_m(N,q) =
    \begin{cases}
    \mathrm{false}, & N < L, \\
    \mathrm{true}, & N \ge L \text{ and } N \bmod L = 0, \\
    \mathcal{C}_m(r, q \bmod r), & N \ge L \text{ and } r = N \bmod L > 0.
    \end{cases}
\end{equation}
This recursive procedure is guaranteed to terminate. Consequently, this single test simultaneously determines the integer decomposition property (IDP), unimodular triangulability, and regular unimodular triangulability of the simplex $S_{d;q,N}$.
\end{proposition}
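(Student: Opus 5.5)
The plan is to compute the first few steps of the ceiling Euclidean algorithm \eqref{eq:negative-euclid} for the pair $(R_0,R_1)=(N,B)$, where $B=N-L$. We will show that they consist of a run of quotients equal to $2$, then one quotient we can compute explicitly, and then exactly the algorithm for the smaller pair $(r,\,r-(mq'+1))$ with $q'=q\bmod r$.

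First, the two base cases. If $N<L$ then $B<0$, and \cref{thm:main-constant}(iv) fails. If $N=L$ then $B=0$, and the condition holds. Now suppose $N>L$, and write $N=cL+r$ with $c\ge1$ and $0\le r<L$.

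\textbf{The run of $2$'s.} I claim that $R_j=N-jL$ and $a_j=2$ for $1\le j\le c-1$, and that $R_c=r$. By induction, if $R_{j-1}=N-(j-1)L$ and $R_j=N-jL>0$, then $a_j=\lceil R_{j-1}/R_j\rceil=2$ exactly when $R_{j-1}\le 2R_j$, that is, when $(j+1)L\le N$. This holds for $j\le c-1$, and it gives $R_{j+1}=2R_j-R_{j-1}=N-(j+1)L$. All these quotients are $\equiv2\pmod m$. If $r=0$, then $R_c=0$, the algorithm stops, and the condition is true; this matches the middle case of \eqref{eq:remainder-test}.

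\textbf{The next quotient.} If $r>0$, then $R_{c-1}=r+L$ and $R_c=r$. Hence $a_c=1+\lceil L/r\rceil$ and $R_{c+1}=r\lceil L/r\rceil-L$. From index $c$ on, the algorithm is exactly the algorithm started at $(r,R_{c+1})$, because the recursion \eqref{eq:negative-euclid} only uses the $R$'s. So the remaining quotients are those of $r/R_{c+1}$, or there are none if $R_{c+1}=0$.

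To identify this tail, write $q=sr+q'$. Then $L=msr+(mq'+1)$, so $\lceil L/r\rceil=ms+\lceil (mq'+1)/r\rceil$, and therefore $a_c\equiv 2\pmod m$ if and only if $\lceil (mq'+1)/r\rceil\equiv1\pmod m$. For $m\ge2$ we have $mq'+1\le m(r-1)+1<mr+1$, so the ceiling lies in $\{1,\ldots,m\}$. It is $\equiv1\pmod m$ only when it equals $1$, that is, when $mq'+1\le r$.

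If $mq'+1>r$, the original condition fails, and so does $\mathcal C_m(r,q')$ by its first case. If $mq'+1\le r$, then $L\equiv mq'+1\pmod r$. This gives $R_{c+1}=r-(mq'+1)$ when $mq'+1<r$, and $R_{c+1}=0$ when $mq'+1=r$. These are exactly the quantities $B'=r-m q'-1$ attached to the pair $(r,q')$. The degenerate case $q'=0$ needs a separate look. If $r=1$, then $R_{c+1}=0$ and the condition is true. If $r>1$, then $R_{c+1}=r-1$ and $r/(r-1)=[2,\ldots,2]^-$, which is also true. Both agree with the convention $\mathcal C_m(r,0)=\mathrm{true}$. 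Hence $\mathcal C_m(N,q)=\mathcal C_m(r,q')$ in every case.

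When $m=1$ every quotient is trivially $\equiv2\pmod1$, so the condition is always true. The recursion also always returns true: $q'<r$ gives $r\ge q'+1=L'$, so the branch $N<L$ is never reached. Termination follows because the first argument strictly decreases, since $r<L\le N$. The final sentence of the proposition then follows from \cref{thm:main-constant}.

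The main obstacle is the bookkeeping at the transition quotient $a_c$: showing that $\lceil (mq'+1)/r\rceil\equiv1\pmod m$ forces the value $1$, and matching $R_{c+1}$ with $B'$, including the boundary cases $mq'+1=r$ and $q'=0$.
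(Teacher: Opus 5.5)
Your proposal is correct and follows essentially the same route as the paper: you peel off quotients equal to $2$ by subtracting $L$ until $N=L+r$, compute the transition quotient $1+\lceil L/r\rceil=1+ms+\lceil (mq'+1)/r\rceil$ with the ceiling confined to $\{1,\ldots,m\}$, identify the next remainder with $r-mq'-1$, and handle $q'=0$ via $r/(r-1)=[2,\ldots,2]^-$. The only difference is in termination: you use the strict decrease of the first argument ($r<L\le N$), whereas the paper uses a case analysis showing that $q$ strictly decreases, so your version is slightly cleaner.
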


\begin{proof}
The base branch $N < L$ corresponds precisely to the established obstruction $B < 0$. If $N = L$, then $B = 0$ and the condition holds trivially. For $N \ge 2L$, the initial quotient in the expansion of $N/(N-L)$ evaluates to exactly two. If $N > 2L$, the subsequent remainder pair is $(N-L, N-2L)$, matching the parameter sequence for the same $q$ but with the parameter reduced to $N-L$. If $N = 2L$, the subsequent remainder vanishes, yielding a true evaluation. By induction, one may repeatedly subtract $L$ from $N$. This validates the second branch when $L$ divides $N$, and otherwise reduces the analysis to the case $N = L+r$ with $0 < r < L$.

Apply the Division Algorithm to write $q = hr+u$ with $0 \le u < r$. The initial quotient for the reduced pair $(L+r, r)$ is given by
\[ a_1 = 1 + \left\lceil\frac{L}{r}\right\rceil = 1 + mh + \left\lceil\frac{mu+1}{r}\right\rceil.
\]
This final ceiling term is bounded between $1$ and $m$, since $1 \le mu+1 \le m(r-1)+1 \le mr$. Thus, the congruence $a_1 \equiv 2 \pmod{m}$ holds if and only if this ceiling evaluates to exactly one, which is equivalent to the inequality $r \ge mu+1$. If this inequality fails, the recursive call $\mathcal{C}_m(r,u)$ similarly fails its initial branch condition. Conversely, if it holds, we obtain $a_1 = 2+mh$, and the subsequent remainder becomes
\[a_1 r - (L+r) = r - mu - 1.
\]
The remainder of the continued fraction expansion therefore coincides exactly with that of the updated parameters $(r,u)$, interpreting a vanishing remainder as the base case $B=0$. If $u=0$, the parameter pair reduces to $(r, r-1)$, which exclusively yields quotients of two, provided the process has not already terminated at $r=1$. This justifies the convention $\mathcal{C}_m(r,0) = \mathrm{true}$ and rigorously establishes \eqref{eq:remainder-test}.

In any non-terminal recursive step, the parameter strictly decreases: $u = q \bmod r < q$. Indeed, if $r > q$, we would have $u = q$, but the requisite inequality $r \ge mq+1 = L$ would immediately fail because $r < L$. Otherwise, we must have $r \le q$, which forces $u < r \le q$. Consequently, the procedure either terminates or fails immediately, or strictly decreases the positive parameter $q$, thereby guaranteeing algorithmic termination.
\end{proof}

\begin{corollary}
Fix integers $d\ge2$ and $q\ge1$, and set $m=d-1$, $L=mq+1$. There is an explicitly
computable set $\mathcal R_m(q)\subseteq\{0,1,\ldots,q\}$ such that
all admissible $N$ are exactly
\[
 N=kL+r,\qquad k\ge1,\quad r\in\mathcal R_m(q).
\]
More precisely, we have 
\[\mathcal R_m(q)=\{0\}\cup \{r:1\le r\le q,\ \mathcal C_m(r,q\bmod r)=\mathrm{true}\}.
\]
\end{corollary}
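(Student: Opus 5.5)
The plan is to read the corollary off \cref{prop:remainder-test}. The admissible $N$ are those with $N>q$ (the standing hypothesis $1\le q<N$) and $\mathcal C_m(N,q)=\mathrm{true}$. By \cref{thm:main-constant}, these are exactly the $N$ for which $S_{d;q,N}$ is IDP, equivalently admits a (regular) unimodular triangulation.

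I would apply the first step of the recursion \eqref{eq:remainder-test} to a given $N$.
\begin{itemize}
\item If $N<L$, the test returns false, so such $N$ are excluded.
\item Otherwise write $N=kL+r$ with $k=\floor{N/L}\ge1$ and $0\le r<L$.
\item If $r=0$, the test returns true, which accounts for $0\in\mathcal R_m(q)$.
\item If $r>0$, the test returns $\mathcal C_m(r,q\bmod r)$. This depends only on $r$ and not on $k$, so the admissible $N$ are precisely $kL+r$ with $k\ge1$ and $r$ in the set of residues $0<r<L$ for which $\mathcal C_m(r,q\bmod r)$ is true.
\end{itemize}
In the case $d=2$ the statement is consistent with this: there $m=1$ and every $N>q$ is admissible.

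It remains to show that every such residue satisfies $r\le q$. This is the one nontrivial point, and I would take it from the termination argument in the proof of \cref{prop:remainder-test}. Suppose $q<r<L$. Then $u=q\bmod r=q$, and the recursive call is $\mathcal C_m(r,q)$. Its first branch compares $r$ with $mq+1=L$; since $r<L$, it returns false. Hence residues with $r>q$ never occur.

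Conversely, for $1\le r\le q$ the value $\mathcal C_m(r,q\bmod r)$ is computed by the terminating recursion, so membership is decidable. When $q\bmod r=0$ the convention $\mathcal C_m(r,0)=\mathrm{true}$ applies. This gives the stated description
\[
\mathcal R_m(q)=\{0\}\cup\{r:1\le r\le q,\ \mathcal C_m(r,q\bmod r)=\mathrm{true}\}\subseteq\{0,\ldots,q\}.
\]

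For the final claim, the condition $k\ge1$ ensures $N\ge L>q$, so every $N$ of the stated form is an admissible parameter. Uniqueness of the pair $(k,r)$ follows from the division algorithm, since $r\le q<L$. The only step needing care is the exclusion of residues $q<r<L$, which uses the fact that the inner call then keeps the same $q$.
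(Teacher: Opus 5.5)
Your proposal is correct and is essentially the paper's argument: one application of \eqref{eq:remainder-test} shows that admissibility of $N\ge L$ depends only on $r=N\bmod L$. Residues with $q<r<L$ are excluded because then $q\bmod r=q$, and the inner call $\mathcal C_m(r,q)$ fails its first branch since $r<L=mq+1$; your remarks on uniqueness of $(k,r)$, computability via the terminating recursion, and the case $d=2$ just make explicit what the paper leaves implicit.
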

\begin{proof}
Apply \eqref{eq:remainder-test}. If the remainder satisfies $r>q$, then $q\bmod r=q$ and $r<L=mq+1$, so the recursive test fails.
All other remainders are classified by the displayed formula.
\end{proof}

For illustration, the resulting residue sets for $d\ge3$ are
\[
\begin{array}{c|c|c}
 q&L&\mathcal R_{d-1}(q)\\ \hline
 1&d&\{0,1\}\\
 2&2d-1&\{0,1,2\}\\
 3&3d-2&\{0,1,3\}\\
 4,\ d=3&9&\{0,1,2,3,4\}\\
 4,\ d\ge4&4d-3&\{0,1,2,4\}.
\end{array}
\]
For example, the remainder $r=2$ for $q=3$ leaves $(N',q')=(2,1)$,
which fails $2\ge m+1$ when $m\ge2$. For $q=4$, the only
nontrivial remaining choice is $r=3$, which leaves $(3,1)$ and
succeeds precisely when $m=2$. The row $q=1$ recovers the
classification $N=kd$ or $N=kd+1$ in \cite[Theorem~A]{BCJ}.

\begin{example}
For the parameters $(d,q,N)=(3,2,7)$, we compute $B=2$ and the negative continued fraction expansion $7/2=[4,2]^-$. Because both entries are congruent to $2$ modulo $m=2$, the associated simplex $\conv(0,e_1,e_2,(5,5,7))$ admits a regular unimodular triangulation.

Conversely, for $(d,q,N)=(3,2,8)$, the expansion evaluates to $8/3=[3,3]^-$. The congruence condition fails, precluding the integer decomposition property (IDP). Explicitly, the lattice point $(3,3,3)$ belongs to the second dilation $2\conv(0,e_1,e_2,(6,6,8))$, yet the final coordinates of the lattice points contained within the original simplex are restricted to the set $\{0,1,4,8\}$. Evidently, no two values from this set sum to exactly three.

To illustrate an affirmative case where the parameters are not coprime, consider $(d,q,N)=(3,2,15)$. Here, $B=10$ and $\gcd(N,B)=5$, yielding the expansion $15/10=[2,2]^-$. The theorem applies directly to establish the IDP, without requiring any preliminary reduction of the original lattice.
\end{example}

\section{Triangulation criteria for one-row simplices}\label{sec:general-triangulations}

The equivalences in \cref{thm:main-constant} use the
constant-coordinate hypothesis. We now return to arbitrary
$\avec=(a_1,\ldots,a_{d-1},N)$. We first specialize two standard
finite tests to the explicitly known lattice configuration, then
recall the Firla--Ziegler obstruction to the converse of
\cref{lem:unimodular-idp}, and finally give a dimension-reduction
construction.

\subsection{A finite compatibility criterion}

The criterion below specializes the defining compatibility condition
for a triangulation and the additivity of normalized volume to
$S_{\avec}$; compare \cite[Definition~2.2.1]{DRS}.
For earlier finite formulations using Hilbert bases and integer
programming, see \cite[Theorem~4 and Corollary~5]{FZ}.
Their facet-balance formulation is different from the compatibility
graph used here.

Using \eqref{eq:all-lattice-points}, we define the full lattice point configuration
\[\mathcal{A}_{\avec} = S_{\avec} \cap \Z^d = \{v_0, \ldots, v_d\} \cup \{p_t : t \in J_{\avec}\}.
\]
This configuration contains $d+1+|J_{\avec}| \le N+d$ elements. Let $\mathcal{U}_{\avec}$ denote the collection of all $(d+1)$-element subsets $\sigma = \{u_0, \ldots, u_d\} \subset \mathcal{A}_{\avec}$ that satisfy the unimodularity condition
\[\left|\det(u_1-u_0, \ldots, u_d-u_0)\right| = 1.
\]
The elements of $\mathcal{U}_{\avec}$ serve as candidate maximal simplices: while every maximal simplex of a unimodular triangulation necessarily belongs to $\mathcal{U}_{\avec}$, individual membership does not guarantee extendability to a complete triangulation.

Two distinct candidate simplices $\sigma, \tau \in \mathcal{U}_{\avec}$ are said to be \emph{compatible} if they intersect in a common face, a condition expressed formally as
\begin{equation}\label{eq:compatibility}
    \conv(\sigma) \cap \conv(\tau) = \conv(\sigma \cap \tau),
\end{equation}
with the standard convention that $\conv(\emptyset) = \emptyset$. We define the compatibility graph $G_{\avec}$ to have vertex set $\mathcal{U}_{\avec}$, with edges connecting precisely the compatible pairs. A \emph{clique of size $N$} is a set of $N$ vertices of this graph such that every pair is connected by an edge; here it represents $N$ pairwise compatible candidate simplices.

\begin{theorem}\label{thm:compatibility-test}
The simplex $S_{\avec}$ admits a unimodular triangulation if and only if the compatibility graph $G_{\avec}$ contains a clique of size $N$. Furthermore, the unimodular triangulations of $S_{\avec}$ correspond precisely to the cliques of size $N$ in $G_{\avec}$.
\end{theorem}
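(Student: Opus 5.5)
We prove both directions by comparing normalized volumes. Recall that $\nvol(S_{\avec})=N$ (from $|\det V|=N$ in \cref{sec:residue}) and that every member of $\mathcal U_{\avec}$ has normalized volume one.

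For the forward direction, let $\mathcal T$ be a unimodular triangulation of $S_{\avec}$. By \cref{lem:unimodular-idp} it uses only lattice points of $S_{\avec}$, so its maximal simplices have vertex sets in $\mathcal A_{\avec}$. Each of these simplices has normalized volume one, so it lies in $\mathcal U_{\avec}$. Any two cells of a triangulation meet in a common face. Since the simplices are nondegenerate, a common face is exactly $\conv$ of the shared vertices, which gives \eqref{eq:compatibility}. So the maximal simplices of $\mathcal T$ form a clique. Normalized volume is additive over a subdivision, since the cells have pairwise disjoint interiors and cover $S_{\avec}$. The normalized volumes therefore sum to $N$, and the clique has exactly $N$ members.

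For the converse, take a clique $\sigma_1,\ldots,\sigma_N$. Pairwise compatibility forces disjoint interiors: a shared interior point would make $\conv(\sigma_i)\cap\conv(\sigma_j)$ full-dimensional, while $\conv(\sigma_i\cap\sigma_j)$ is a proper face of each simplex when $\sigma_i\neq\sigma_j$. Hence the union of the $\conv(\sigma_i)$ has volume $N/d!=\operatorname{vol}(S_{\avec})$. This union is a closed subset of $S_{\avec}$. If it were proper, its complement in $S_{\avec}$ would be relatively open and nonempty, and it would meet the interior of $S_{\avec}$. Some small ball would then be missed by the union, giving a volume deficit and a contradiction. So the simplices cover $S_{\avec}$.

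It remains to check that the $\sigma_i$ together with all their faces form a polyhedral subdivision. Closure under faces holds by construction. For two faces $F\subseteq\conv(\sigma_i)$ and $G\subseteq\conv(\sigma_j)$, write $F=\conv(\sigma_i')$ and $G=\conv(\sigma_j')$. Then
\[F\cap G=F\cap\conv(\sigma_i\cap\sigma_j)\cap G.\]
This is the intersection of two faces of the simplex $\conv(\sigma_i\cap\sigma_j)$, namely $\conv(\sigma_i'\cap\sigma_j)$ and $\conv(\sigma_j'\cap\sigma_i)$. It therefore equals $\conv(\sigma_i'\cap\sigma_j')$, because faces of a simplex intersect in the face spanned by the common vertices. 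So $F\cap G$ is a face of both $F$ and $G$.

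This yields a lattice triangulation whose maximal simplices are unimodular. The two constructions are mutually inverse, since a triangulation is determined by its maximal cells. This gives the stated bijection between unimodular triangulations and cliques of size $N$.

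The main obstacle is the covering step in the converse. The argument relies on closedness and the volume count, so these must be stated carefully. The face-intersection bookkeeping is routine by comparison.
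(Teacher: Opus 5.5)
Your proof is correct and follows the paper's route. In one direction, $\nvol(S_{\avec})=N$ forces exactly $N$ pairwise compatible unimodular maximal simplices. In the other, disjoint interiors, closedness of the union and full volume force the clique to cover $S_{\avec}$. The only difference is that you check explicitly that faces of different clique members meet in common faces, which the paper leaves to the phrase ``compatibility then gives a unimodular triangulation by adjoining all faces.''
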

\begin{proof}
Every unimodular triangulation has exactly $N$ maximal simplices,
since $\nvol(S_{\avec})=N$, and these form a clique by
\eqref{eq:compatibility}. Conversely, the simplices in a clique
of size $N$ have disjoint interiors and total normalized volume
$N$. Their union is closed in $S_{\avec}$ and has its full
volume, so it equals $S_{\avec}$: a nonempty relatively open
complement would have positive $d$-dimensional volume.
Compatibility then gives a unimodular triangulation by adjoining
all faces.
\end{proof}

The compatibility of any pair $\sigma, \tau \in \mathcal{U}_{\avec}$ can be verified computationally via rational linear programming. We first consider the linear feasibility problem
\begin{equation}\label{eq:intersection-lp}
    \sum_{p \in \sigma} \alpha_p p = \sum_{q \in \tau} \beta_q q, \qquad 
    \sum_{p \in \sigma} \alpha_p = \sum_{q \in \tau} \beta_q = 1, \qquad
    \alpha_p \ge 0, \qquad \beta_q \ge 0.
\end{equation}
If the system is infeasible, the simplices are disjoint and therefore compatible. If it is feasible, we maximize the objective function $\sum_{p \in \sigma \setminus \tau} \alpha_p$ over the feasible region. This maximum evaluates to exactly zero if and only if the pair is compatible. Indeed, the uniqueness of non-negative barycentric coordinates ensures that a point belongs to the common face $\conv(\sigma \cap \tau)$ precisely when all weights outside this face vanish. Thus, a zero maximum guarantees that the geometric intersection is confined to the common face, noting that the reverse inclusion in \eqref{eq:compatibility} holds unconditionally. (In the extreme case where $\sigma \cap \tau = \emptyset$ yet the problem is feasible, the objective is trivially one, correctly identifying incompatibility.) Because the feasible region is bounded, the maximum is always attained whenever the system is feasible.

Since all input coefficients are integers, feasibility and optimality can be determined using exact rational arithmetic (e.g., by evaluating the basic feasible solutions of the linear system). Algorithmically, this procedure tests at most $\binom{N+d}{d+1}$ candidate subsets before performing a finite graph search for the requisite clique. We note that no polynomial-time bound is claimed for this final graph search phase.

The next criterion is the usual linear feasibility test for
regularity; see \cite[Section~2.3, p.~71]{DRS} and
\cite[Section~1.1, Equations~(1.1)--(1.2)]{HPPS}.

\begin{proposition}\label{prop:regularity-test}
Let $\mathcal{T}$ be a triangulation arising from \cref{thm:compatibility-test}. Then $\mathcal{T}$ is regular if and only if the following finite rational linear system is feasible. Specifically, introducing a height variable $w_p$ for each $p \in \mathcal{A}_{\avec}$ and an affine function $\ell_\sigma: \R^d \to \R$ for each maximal simplex $\sigma \in \mathcal{T}$, the system requires
\begin{equation}\label{eq:regularity-system}
 \ell_\sigma(p) = w_p \quad (p \in \sigma), \qquad
 \ell_\sigma(q) \le w_q - 1 \quad (q \in \mathcal{A}_{\avec} \setminus \sigma).
\end{equation}
Consequently, enumerating the cliques of size $N$ and evaluating this feasibility condition provides a finite decision procedure for the existence of a regular unimodular triangulation.
\end{proposition}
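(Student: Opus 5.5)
The plan is to prove the two directions separately, using the characterization \eqref{eq:strict-lifting} of regular triangulations recorded in \cref{sec:lifting}, and then derive the decision procedure from \cref{thm:compatibility-test}.

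\emph{Feasibility implies regularity.} Suppose $(w_p)_p$ and $(\ell_\sigma)_{\sigma\in\mathcal T}$ satisfy \eqref{eq:regularity-system}. Each $\ell_\sigma$ is a lower supporting function for the lifting $w$. Its equality set is exactly $\sigma$, strictly so, since off $\sigma$ we have $\ell_\sigma(q)\le w_q-1<w_q$. Hence every maximal simplex of $\mathcal T$ is a cell of the regular subdivision induced by $w$. These cells are full-dimensional and, as shown in the proof of \cref{thm:compatibility-test}, already cover $S_{\avec}$. Any further maximal cell of the regular subdivision would have interior disjoint from them, which is impossible. So the regular subdivision induced by $w$ is $\mathcal T$, and $\mathcal T$ is regular with vertex sets as in \eqref{eq:strict-lifting}.

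\emph{Regularity implies feasibility.} Let $w$ induce $\mathcal T$. By \eqref{eq:strict-lifting} there are affine $\ell_\sigma$ with equality on $\sigma$ and strict inequality on $\mathcal A_{\avec}\setminus\sigma$. Two points about the configuration are needed here. First, the equality sets must be exactly the vertex sets $\sigma$. Second, every point of $\mathcal A_{\avec}$ must appear in some $\sigma$; this holds because the triangulation is unimodular and so uses all lattice points by \cref{lem:unimodular-idp}. Since $\mathcal A_{\avec}$ and $\mathcal T$ are finite, the slack
\[
\delta=\min_{\sigma}\min_{q\notin\sigma}\bigl(w_q-\ell_\sigma(q)\bigr)
\]
is positive. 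Replacing $w$ and all $\ell_\sigma$ by their multiples by $1/\delta$ preserves the equalities and turns the strict inequalities into $\ell_\sigma(q)\le w_q-1$. This gives a real solution of \eqref{eq:regularity-system}.

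The system has integer coefficients, with unknowns $w_p$ and the $d+1$ coefficients of each $\ell_\sigma$. A feasible linear system with rational data has a rational solution, for example a vertex of its feasible polyhedron, so feasibility can be decided exactly, say by Fourier--Motzkin elimination or exact simplex. The normalization to slack $1$ is what makes the system closed, and this is precisely why a strict inequality can be encoded.

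For the consequence, \cref{thm:compatibility-test} identifies unimodular triangulations with the cliques of size $N$ in the finite graph $G_{\avec}$. Enumerating these cliques and running the finite linear test on each therefore decides whether a regular unimodular triangulation exists. I expect the only delicate point to be the reverse direction, namely making sure that the regular subdivision's equality sets coincide with the vertex sets rather than being larger. This is exactly what the convention in \eqref{eq:strict-lifting} supplies, and the scaling argument then handles the rest.
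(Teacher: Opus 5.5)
Your proposal is correct and follows the same route as the paper. The paper also obtains \eqref{eq:regularity-system} by dividing a regular lifting and its supporting functions by the minimum positive gap, and it calls the converse immediate from the definition; your covering argument just spells that out. One harmless slip: adding a common affine function to all $w_p$ and all $\ell_\sigma$ preserves the system, so the feasible set contains lines and has no vertices. A rational solution still exists, for example on a minimal face or via Fourier--Motzkin, so nothing in your argument breaks.
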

This is \eqref{eq:strict-lifting} with the strict inequalities
normalized to gaps of at least one. Indeed, a regular lifting has
a positive minimum gap over the finitely many pairs $(\sigma,q)$;
dividing the heights and supporting functions by that minimum
gives \eqref{eq:regularity-system}. The reverse implication is
immediate from the definition of a regular triangulation.

\begin{remark}
For \cite[Question~3.13]{BCJ}, \cref{thm:main-general} gives
an arithmetic IDP test in terms of the box heights. By contrast,
\cref{thm:compatibility-test,prop:regularity-test} apply standard
finite searches to the explicit point set
\eqref{eq:all-lattice-points}. They do not constitute a structural
classification of all one-row simplices admitting unimodular
triangulations. The example below also shows why the IDP test
cannot serve as such a classification.
\end{remark}

\subsection{An IDP simplex without a unimodular triangulation}

We state the external obstruction used here in its cone form. A \emph{Hilbert partition} of a pointed lattice cone is a triangulation of the cone into unimodular cones generated by
subsets of its Hilbert basis.

\begin{example}(Firla--Ziegler, \cite[Example~10]{FZ})\label{prop:FZ}
The cone
\[C=\cone\bigl(e_1,e_2,e_3,(14,31,34,39)\bigr)\subset\R^4
\]
has no Hilbert partition. Its Hilbert basis is contained in the height-one hyperplane $y_1+y_2+y_3-2y_4=1$.
\end{example}

Both the Hilbert-basis assertion and the nonexistence of a
Hilbert partition are due to Firla and Ziegler. The following
example merely identifies their cone with the homogenized cone
of a one-row simplex. The equivalent cyclic quotient example is
also recorded in \cite[Note~6.2(iii)]{DHZ}.

\begin{example}\label{ex:39}
The one-row simplex
\[
 S=\conv\bigl(0,e_1,e_2,(34,31,39)\bigr)
\]
is IDP and has no unimodular triangulation.
\end{example}
\begin{proof}[Identification with the Firla--Ziegler example]
The integral linear map
\[
 \Phi(h,x_1,x_2,x_3)
 =(h-x_1-x_2+2x_3,\ x_2,\ x_1,\ x_3)
\]
is unimodular, with inverse
\[
 (y_1,y_2,y_3,y_4)\longmapsto
 (y_1+y_2+y_3-2y_4,\ y_3,\ y_2,\ y_4).
\]
It sends the homogenized vertices of $S$ to
$e_1,e_3,e_2,(14,31,34,39)$ and identifies height with
$y_1+y_2+y_3-2y_4$. By \cref{prop:FZ} and
\eqref{eq:idp-hilbert-criterion}, $S$ is IDP. Its height-one
lattice points are therefore precisely the Hilbert basis of its
cone. A unimodular triangulation of $S$ would give, after coning
and applying $\Phi$, a Hilbert partition of $C$, contrary to
\cref{prop:FZ}.
\end{proof}

\subsection{A dimension-reduction construction}

The next sufficient condition uses a subdivision into two
pyramids of lattice height one. Coning over a unimodular base
preserves unimodularity; compare the lattice-pyramid discussion
in \cite[Section~1.1]{HPPS}. We give the coordinate calculation
and a common lifting for the two pyramids. The first coordinate
can be replaced by any of the first $d-1$ coordinates.
For this statement, we also use $S_{\bvec}$ when its last
parameter is $M=1$, in which case it is a unimodular simplex;
in dimension one, $S_{(M)}$ denotes the interval $[0,M]$.

\begin{theorem}\label{thm:dimension-reduction}
Assume that $d \ge 2$, $M \ge 1$, and $0 \le c_i < M$ for all $2 \le i \le d-1$. Define the parameter vectors
\[\avec = (2M-1, 2c_2, \ldots, 2c_{d-1}, 2M), \qquad
\bvec = (c_2, \ldots, c_{d-1}, M).
\]
If the $(d-1)$-dimensional simplex $S_{\bvec}$ admits a regular unimodular triangulation, then $S_{\avec}$ admits one as well. Furthermore, this implication remains valid for unimodular triangulations that are not necessarily regular. For $d=2$, the base simplex is the interval $[0,M]$.
\end{theorem}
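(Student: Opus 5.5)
The plan is to split $S_{\avec}$ into two lattice pyramids of lattice height one over a common facet that is unimodularly equivalent to $S_{\bvec}$. Then cone a triangulation of $S_{\bvec}$ from the two apices.

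\emph{Geometry.} Set $m=\tfrac12(e_1+\avec)=(M,c_2,\ldots,c_{d-1},M)\in\Z^d$, the midpoint of the edge $[e_1,\avec]$. Subdividing this edge at $m$ (a stellar subdivision) gives
\[
 S_{\avec}=P_1\cup P_2,\qquad P_1=\conv(F\cup\{e_1\}),\quad P_2=\conv(F\cup\{\avec\}),
\]
with common facet $F=\conv(0,e_2,\ldots,e_{d-1},m)$. The facet $F$ lies in the hyperplane $W=\{x_1=x_d\}$. Consider the integral map $W\cap\Z^d\to\Z^{d-1}$ given by $x\mapsto(x_2,\ldots,x_{d-1},x_d)$. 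It is a lattice isomorphism, with inverse $y\mapsto(y_{d-1},y_1,\ldots,y_{d-1})$ in the evident indexing. It sends $0,e_2,\ldots,e_{d-1},m$ to $0,e_1,\ldots,e_{d-2},\bvec$. Hence $F\cong S_{\bvec}$ unimodularly; for $d=2$, $F=[0,m]\cong[0,M]$. The primitive functional $x_1-x_d$ takes the value $1$ at $e_1$ and $-1$ at $\avec$. So both pyramids have lattice height one over $F$, and they lie on opposite sides of $W$. As a check, the volumes are $\nvol(F)=M$ and $2M=N$.

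\emph{Triangulation.} Pull a unimodular triangulation $\mathcal T$ of $S_{\bvec}$ back to $F$. Then take the cones $\conv(\sigma\cup\{e_1\})$ and $\conv(\sigma\cup\{\avec\})$ over the maximal simplices $\sigma\in\mathcal T$. A pyramid of lattice height one over a unimodular base is unimodular; this is the same determinant expansion as in \cref{lem:lifted-volumes}. The two families triangulate $P_1$ and $P_2$ respectively and agree on $F$. Hence together they form a unimodular triangulation of $S_{\avec}$. This settles the non-regular statement.

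\emph{Regularity.} This is the only real step. Lattice points of a height-one pyramid lie in its base or at its apex, so $\mathcal A_{\avec}=(F\cap\Z^d)\cup\{e_1,\avec\}$. Take a regular lifting $w$ on $F\cap\Z^d$ inducing $\mathcal T$, and put $w(e_1)=w(\avec)=w(m)+1$. Let $\sigma\in\mathcal T$ have planar support $\ell_\sigma$. Extend $\ell_\sigma$ affinely by prescribing the value $w(e_1)$ at $e_1$; this is possible since $e_1\notin W$. The supporting inequalities on $F$ are inherited. For the other apex, affinity and $e_1+\avec=2m$ give
\[
\ell(\avec)=2\ell_\sigma(m)-w(e_1)\le 2w(m)-w(m)-1<w(\avec).
\]
The symmetric computation handles the cones over $\sigma$ with apex $\avec$. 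So each cone is a lower facet with exactly its vertex set as contact locus, as in \eqref{eq:strict-lifting}. These lower facets cover $S_{\avec}$, so they form the regular triangulation induced by this lifting.

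The main obstacle is this strictness check at the opposite apex. The midpoint relation $e_1+\avec=2m$ is exactly what makes the simple choice of apex heights work.
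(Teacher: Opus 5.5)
Your proposal is correct and follows essentially the same route as the paper. Both split $S_{\avec}$ along the hyperplane $x_1=x_d$ into two pyramids of lattice height one over a facet identified with $S_{\bvec}$, cone the base triangulation from $e_1$ and $\avec$, and use the midpoint relation $e_1+\avec=2m$ for strictness at the opposite apex. The only difference is normalization: the paper rescales the base lifting by a small $\eps$ and puts both apices at height $1$, whereas you keep the base lifting and put both apices at height $w(m)+1$. Either way, the condition needed is that the apex height exceed the lifted height of the midpoint.
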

\begin{proof}
Let $f(x)=x_1-x_d$ and $H=\ker f$. The integral functional $f$ is \emph{primitive}, meaning that $f(\Z^d)=\Z$. Among the vertices of $S_{\avec}$, precisely $e_1$ and $v_d = \avec$ lie outside $H$. They satisfy $f(e_1) = 1$ and $f(v_d) = -1$. The segment connecting them intersects $H$ exactly at its midpoint,
\[w = \frac{e_1+v_d}{2} = (M, c_2, \ldots, c_{d-1}, M) \in \Z^d.
\]
Consequently, the hyperplane section is given by
\[ F = S_{\avec} \cap H = \conv(0, e_2, \ldots, e_{d-1}, w).
\]
To verify that these constitute all vertices of the section, observe that an edge of a simplex crosses $H$ at an interior point if and only if its endpoints yield $f$-values of opposite signs; the unique edge satisfying this condition is $[e_1,v_d]$. Alternatively, expressing the barycentric coordinates of a point in $S_{\avec}$ allows one to isolate the symmetric contribution of $e_1$ and $v_d$, which corresponds to $w$, thereby establishing the geometric decomposition:
\begin{equation}\label{eq:two-unit-pyramids}
    S_{\avec} = \conv(F,e_1) \cup \conv(F,v_d), \qquad \conv(F,e_1) \cap \conv(F,v_d) = F.
\end{equation}
The projection map
\[H \cap \Z^d \longrightarrow \Z^{d-1}, \qquad (x_1, x_2, \ldots, x_d) \longmapsto (x_2, \ldots, x_d)
\]
defines a lattice isomorphism, since $x_1 = x_d$ holds on $H$. This isomorphism naturally identifies $F$ with $S_{\bvec}$.

Given a unimodular triangulation of $F$, taking the geometric cone over each of its maximal simplices with respect to the apices $e_1$ and $v_d$ produces a face-to-face triangulation of $S_{\avec}$, as dictated by \eqref{eq:two-unit-pyramids}. To see that each resulting simplex is unimodular, note that the edge vectors of its base form a lattice basis for $H \cap \Z^d$, while the vector connecting any base vertex to either apex evaluates to $\pm 1$ under $f$. Because $f: \Z^d \to \Z$ is surjective, this guarantees that the full set of vectors forms a $\Z$-basis for $\Z^d$.

It remains to verify that regularity is preserved via an explicit lifting. By convexity, the $f$-evaluation of any lattice point in $S_{\avec}$ must be an integer in $[-1,1]$. Moreover, the extreme values $\pm 1$ are achieved uniquely at the vertices $e_1$ and $v_d$. Hence, all remaining lattice points of $S_{\avec}$ belong to $F$. Suppose $\eta$ is a lifting that induces the given regular triangulation of $F$. We define a new height function on $S_{\avec} \cap \Z^d$ by assigning heights $\eps \eta(p)$ to points $p \in F$, and a height of $1$ to both apices $e_1$ and $v_d$. Here, $\eps > 0$ is chosen sufficiently small to ensure that $\eps \eta(w) < 1$.

For any maximal simplex $\tau$ in the base triangulation, let $\ell_\tau$ be the affine functional supporting the lifted vertices of $\tau$ under $\eta$, satisfying strict inequalities at all other lattice points of $F$. Extend the functional $\eps \ell_\tau$ affinely to $\R^d$ by stipulating that it takes the value $1$ at $e_1$. Since $w = (e_1+v_d)/2$, its evaluation at $v_d$ satisfies
\[2\eps\ell_\tau(w)-1 \le 2\eps\eta(w)-1 < 1.
\]
Therefore, this extended functional strictly supports the lower faces, with its equality set coinciding exactly with the vertices of $\conv(\tau,e_1)$. A symmetric construction applies to the cone over $v_d$ by instead prescribing the value $1$ at $v_d$. By \eqref{eq:two-unit-pyramids}, these lower simplices completely cover $S_{\avec}$, confirming that the constructed triangulation is indeed regular.
\end{proof}

\begin{corollary}
For any integer $N \ge 3$, define the simplex
\[S_N = \conv\bigl(0, e_1, e_2, (N-1, N-2, N)\bigr).
\]
Then $S_N$ possesses the integer decomposition property (IDP) if and only if $N$ is even. Furthermore, $S_N$ admits a regular unimodular triangulation when $N$ is even, and no unimodular triangulation when $N$ is odd.
\end{corollary}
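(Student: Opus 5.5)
For even $N=2M$, the plan is to apply \cref{thm:dimension-reduction} with $d=3$ and $c_2=M-1$. This gives $\avec=(2M-1,2M-2,2M)=(N-1,N-2,N)$ and the base vector $\bvec=(M-1,M)$. The simplex $S_{\bvec}=\conv(0,e_1,(M-1,M))$ is a lattice polygon. By \cref{lem:polygon} it admits a regular unimodular triangulation; when $M=1$ it is already unimodular. Hence $S_N$ admits a regular unimodular triangulation, and it is IDP by \cref{lem:unimodular-idp}. The hypothesis $0\le c_2<M$ holds because $M\ge2$ for $N\ge3$ even.

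For odd $N$, it suffices to show that $S_N$ is not IDP, since a unimodular triangulation would force IDP by \cref{lem:unimodular-idp}. I will use \cref{thm:main-general} with $a_1=N-1$, $a_2=N-2$ and $A=2N-3$, so that
\[
H(t)=\ceil{\frac{(N-1)t}{N}}+\ceil{\frac{(N-2)t}{N}}-\floor{\frac{(2N-4)t}{N}}.
\]
For $1\le t<N$ one has $\ceil{(N-1)t/N}=t$. For $t<N/2$ the other terms are $\ceil{(N-2)t/N}=t$ and $\floor{(2N-4)t/N}=2t-1$, so $H(t)=1$. For $N/2<t<N$ they are $\ceil{(N-2)t/N}=t-1$ and $\floor{(2N-4)t/N}=2t-3$, so $H(t)=2$. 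These formulas come from writing $2t/N$ and $4t/N$ relative to the thresholds $1$ and $2$, and they need a short check at the boundary values of $t$.

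With $N$ odd, take $t=(N+1)/2$. Then $H(t)=2$, so \cref{thm:main-general} requires some $s<t$ with $H(s)=1$ and $H(t-s)=1$. Every $s\le (N-1)/2$ has $H(s)=1$, and $t-s\le (N-1)/2$ as well, so this condition appears to be satisfied. So the naive test seems to pass, and I must recheck the heights before concluding anything.

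The main obstacle is exactly this height computation. The values of $H$ near $t\approx N/3$ and $t\approx 2N/3$ need care, because $\floor{4t/N}$ changes there and breaks the two-case formula above. I will redo the computation carefully, splitting $t$ according to $\floor{2t/N}$ and $\floor{4t/N}$. Then I will look for the failing index of \cref{thm:main-general} for odd $N$, most likely a $t$ with $H(t)=2$ lying just past a threshold whose only admissible differences $t-s$ fall in a height-$2$ range. As a sanity check I will test $N=3$ and $N=5$ by hand, using \cref{cor:necessary-idp} for $t=1$. Once the failing $t$ is found, the point $p_t$ from \cref{thm:main-general} serves as the explicit certificate that $S_N$ is not IDP for odd $N$.
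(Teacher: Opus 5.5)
Your even case is correct and matches the paper: \cref{thm:dimension-reduction} with $c_2=M-1$ reduces to the triangle $S_{(M-1,M)}$, which \cref{lem:polygon} triangulates regularly and unimodularly.

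\textbf{The odd case has a genuine gap.} You end with a plan to recompute rather than a proof. Worse, the computation you did carry out is wrong at the decisive step. The identity $\floor{(2N-4)t/N}=2t-1$ for all $t<N/2$ is false. In general
\[
\floor{\frac{(2N-4)t}{N}}=2t-\ceil{\frac{4t}{N}}.
\]
Combining this with $\ceil{(N-1)t/N}=t$ and $\ceil{(N-2)t/N}=t-\floor{2t/N}$ gives
\[
H(t)=\ceil{\frac{4t}{N}}-\floor{\frac{2t}{N}},\qquad 1\le t<N.
\]
So the breakpoints are $N/4$, $N/2$ and $3N/4$, not $N/3$ and $2N/3$ as you suspect. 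For odd $N$ the heights are:
\begin{itemize}
\item $H(t)=1$ exactly for $1\le t\le N/4$;
\item $H(t)=2$ for $N/4<t<3N/4$;
\item $H(t)=3$ for $3N/4<t<N$.
\end{itemize}
Hence the set of junior indices is $J=\{1,\ldots,\floor{N/4}\}$, not $\{1,\ldots,(N-1)/2\}$. As written, your proposal actually argues that the test passes, i.e.\ toward the wrong conclusion.

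\textbf{How to close the gap.} With the corrected formula, the index you already tried, $t_0=(N+1)/2$, is the failing one.
\begin{itemize}
\item First, $H(t_0)=3-1=2$.
\item For every $s\in J$ one has $N/4<(N+2)/4\le t_0-s\le (N-1)/2<N/2$, hence $H(t_0-s)=2\neq 1$.
\end{itemize}
So the condition of \cref{thm:main-general} fails at $t_0$. The point $p_{t_0}=\bigl((N+1)/2,(N-1)/2,(N+1)/2\bigr)\in 2S_N$ is the certificate, and \cref{lem:unimodular-idp} then rules out any unimodular triangulation.

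For $N=3$ the set $J$ is empty, so the test fails immediately, consistent with \cref{cor:necessary-idp}. For $N=5$ this recovers the paper's example $\avec=(4,3,5)$, with heights $1,2,2,3$.

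This is exactly the paper's argument. The paper phrases the last step through last coordinates: any decomposition would need $t_0=s_1+s_2$ with $s_i\in J$, but $s_1+s_2\le 2\floor{N/4}<t_0$.
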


\begin{proof}
If $N=2M$ is even, \cref{thm:dimension-reduction} reduces the problem to analyzing the lattice triangle $S_{(M-1,M)}$. By \cref{lem:polygon}, this triangle admits a regular unimodular triangulation. Consequently, $S_N$ also admits such a triangulation, which implies it possesses the IDP by \cref{lem:unimodular-idp}.

Now suppose that $N=2k+1$ is odd. In \eqref{eq:general-coordinate-lattice}, the generator numerator is congruent to $(-4,1,2,1)$ modulo $N$, so the height function is
\[H(t) = \frac{[-4t]_N+2t+[2t]_N}{N} = \left\lceil\frac{2t+[2t]_N}{N}\right\rceil.
\]
For $1 \le t \le k$, we have $[2t]_N = 2t$, meaning the condition $H(t) = 1$ is equivalent to $4t \le N$. For $k+1 \le t < N$, the expression $2t+[2t]_N = 4t-N$ strictly exceeds $N$, implying that $H(t) \ge 2$. Therefore, the index set of junior points is precisely
\[J = \left\{1, \ldots, \lfloor N/4 \rfloor \right\}.
\]
At $t_0 = k+1 = (N+1)/2$, we observe that $[2t_0]_N = 1$, which gives $H(t_0) = 2$. By examining the final coordinate, any valid decomposition of this height-two box point into a sum of two junior points would necessitate $t_0 = s_1 + s_2$ for some $s_1, s_2 \in J$. However, this yields
\[s_1 + s_2 \le 2\lfloor N/4 \rfloor < \frac{N+1}{2} = t_0,
\]
which is a contradiction. The geometric obstruction in spatial coordinates corresponds to the lattice point $(k+1,k,k+1) \in 2S_N$. Thus, $S_N$ lacks the IDP for odd $N$, and \cref{lem:unimodular-idp} consequently precludes the existence of any unimodular triangulation.
\end{proof}

\section{Concluding remarks}\label{Section-Finily}

\cref{thm:main-ehrhart} answers \cite[Question~5.14]{BCJ} and determines the log-concave and real-rooted cases. In particular,
the three leading coefficients already detect failure of log-concavity for $d\ge6$ and failure of real-rootedness for
$d\ge4$.

For the constant-coordinate family, \cref{thm:main-constant} expresses the two-parameter theory of \cite{DHH,DLR,Sato}
in Hermite normal form coordinates, giving the classification requested in \cite[Question~3.14]{BCJ}. The proof tracks the
original lattice when $\gcd(N,B)>1$ and gives explicit regular triangulations. The equivalent remainder test in
\cref{prop:remainder-test} describes, for fixed $d,q$, all admissible $N$ by finitely many residue classes above the
necessary threshold.

For arbitrary one-row simplices, \cref{thm:main-general} specializes the Hilbert-basis criterion to a finite arithmetic
test, with decompositions or certificates of failure. The standard compatibility and lifting tests in
\cref{thm:compatibility-test,prop:regularity-test} are finite decision procedures, while \cref{thm:dimension-reduction}
provides an additional constructive sufficient condition.
A structural classification of unimodular triangulations for the general family remains separate from these tests. The
Firla--Ziegler example \cite[Example~10]{FZ}, written in our coordinates in \cref{ex:39}, shows that such a classification
must distinguish triangulability from IDP.






\noindent
{\small \textbf{Acknowledgments:}}
The authors would like to express sincere gratitude for all the suggestions that have improved the presentation of this paper.
Feihu Liu was partially supported by the Postdoctoral Fellowship Program and China Postdoctoral Science Foundation (Grant No. BX2026002).

\noindent{\small \textbf{Declaration of AI Assistance:}}
During the preparation of this manuscript, the authors used ChatGPT to assist in exploring possible approaches, checking technical details, and improving the exposition. All mathematical arguments, computations, proofs, and results were independently verified by the authors. The authors take full responsibility for the content of this manuscript.

\end{document}